\documentclass[12pt,a4paper]{article}
\usepackage[a4paper,top=2.5cm,bottom=2.5cm,left=2.5cm,right=2.5cm]{geometry}
\usepackage{amsmath}
 \usepackage{graphicx}
\usepackage{amsthm}
\usepackage{color}
\usepackage{fancyhdr}
\usepackage{pdfpages}
\usepackage{txfonts}
\usepackage{amssymb}
\usepackage{paralist}
\usepackage{extarrows}
\usepackage{setspace}
\usepackage{braket} 
\usepackage[colorlinks=true,citecolor=blue,linkcolor=blue,anchorcolor=blue,urlcolor=blue]{hyperref}
\allowdisplaybreaks
\numberwithin{equation}{section}
\newtheorem{theorem}{Theorem}[section]
\newtheorem{lemma}{Lemma} [section]
\newtheorem{corollary}{Corollary} [section]

\begin{document}
\title{Fu\v{c}ik spectrum for  the operator with rapidly increasing weight and applications
\footnote{This study was supported by the National Natural Science Foundation of China (Grant No. 12571117.)}}
\author{Jinzi Bai, Fei Fang \footnote{Corresponding author. Email: fangfei68@163.com} \\  \footnotesize  \emph{School of Mathematics and Statistics,%
Beijing Technology and Business University, Beijing 100048, China}}

\maketitle
\noindent \textbf{\textbf{Abstract:}}  In this paper, we study the Fu\v{c}ik spectrum for the operator with rapidly increasing
weight, which is defined as a set $\Sigma$ comprising those $(\alpha, \beta) \in \mathbb{R}^2$ such that
\begin{equation*}
\left\{\begin{array}{l}
L u:=-\Delta u-\frac{1}{2}(x \cdot \nabla u)=\alpha u^{+}-\beta u^{-}, \text{in}\   \mathbb{R}^N,\\
u\in X,
\end{array}\right.
\end{equation*}
has a non-trivial solution $u$,  where, $N\geq1$, $u^{ \pm}=\max ( \pm u, 0)$, $u=u^{+}-u^{-}$.
The existence of a first nontrivial curve $\mathcal{C}$
 of this spectrum, along with some of its properties (e.g., Lipschitz continuity, strict decrease and asymptotic behavior) is investigated in this paper.
Our difficulty is that the problem  is defined on the whole space $\mathbb{R}^N$,
and therefore certain estimates do not carry over from the Fu\v{c}ik problem on bounded domains.
As an application, we establish the
multiplicity of solutions to the following  problem
\begin{equation*}
\left\{\begin{array}{l}
-\Delta u-\frac{1}{2}(x \cdot \nabla u)=f(x,u), \text{in}\   \mathbb{R}^N,\\
u\in X,
\end{array}\right.
\end{equation*}
where, $N\geq1$ and the nonlinearity $f$ is asymptotically linear at zero and at infinity.

\noindent  \textbf{Keywords:} Fu\v{c}ik spectrum, self-similar problem, multiplicity of solutions.

\noindent  \textbf{Mathematics Subject Classification (2010):} 35R11, 35R09, 35A15

\section{Introduction}
In this study, we investigate the   Fu\v{c}ik spectrum  problem of operator $L$,
which is defined as the set $\Sigma$ of $(\alpha, \beta) \in \mathbb{R}^2$ such that
\begin{equation}\label{ee1}
\left\{\begin{array}{l}
L u:=-\Delta u-\frac{1}{2}(x \cdot \nabla u)=\alpha u^{+}-\beta u^{-}, \ \text{in}\   \mathbb{R}^N \\
u\in X,
\end{array}\right.
\end{equation}
has a nontrivial solution $u$. Here $N\geq1$, $u^{ \pm}=\max ( \pm u, 0)$, $u=u^{+}-u^{-}$ and the definition of space $X$ is provided in the Section 2.
The operator $L$ is closely related to the self-similar solutions of the heat equation, which was studied by
Escobedo and Kavian in \cite{bur6}. The operator $L$ appears in the process of looking for the self-similar solutions
$$
v(t, x)=t^{-1 /(p-2)} u\left(t^{-1 / 2} x\right)
$$
of the heat equation
$$
v_t-\Delta v=|v|^{p-2} v.
$$
Escobedo and Kavian expressed the operator $L$ as the form of a divergence, that is,
$$
L u=-\Delta u-\frac{1}{2}(x \cdot \nabla u)=-\frac{1}{K} \nabla \cdot(K \nabla u),
$$
where $K(x): = {e^{|x{|^2}/4}},$ so the operator $L$  has a variational structure.
They also equipped the operator $L$ with a weighted Sobolev space and proved related embedding theorems  in  \cite{bur6}.

This generalized spectrum concept was proposed by Fu\v{c}ik \cite{bur12} and Dancer \cite{bur11} in the 1970s,
where they considered the problem
\begin{equation*}
-\Delta u=\alpha u^{+}+\beta u^{-} \quad \text { in } \Omega,\left.\quad u\right|_{\partial \Omega}=0.
\end{equation*}
Since then,
several papers have specifically studied the Fu\v{c}ik spectrum of the Laplacian operator on bounded domains $\Omega$
(e.g., \cite{bur4}, \cite{bur13}, \cite{bur14}).
In \cite{bur14},
Schechter proved the existence of the Fu\v{c}ik spectrum near the point $\left(\lambda_k, \lambda_k\right)$,
where $k \in \mathbb{N}$ and $\lambda_k$ is an eigenvalue of $-\Delta$.
In  \cite{bur11}, it was proved that the lines $\lambda_1 \times \mathbb{R}$ and $\mathbb{R} \times \lambda_1$ are isolated in $\Sigma_2$;
while in  \cite{bur13}, the first nontrivial curve in $\Sigma$ passing through $\left(\lambda_2, \lambda_2\right)$ was constructed and characterized via variational methods.
The Fu\v{c}ik spectrum  of the $p$-Laplace operator,
Kirchhoff-type operator, fractional Laplace operator,
 and Schr\"{o}dinger operator have been studied in  \cite{bur4}, \cite{bur8}, \cite{bur9} and \cite{bur10}
 respectively. In \cite{bur10}, \cite{bur4}, \cite{bur9}   and \cite{bur8}, the first non-trivial curve in the
 Fu\v{c}ik point spectrum was constructed using the minimax method,
 and some qualitative characteristics of this curve and its corresponding eigenfunctions were exhibited. As
applications, the authors also obtained some results on existence of multiple solutions for nonlinear
 equations. The reader can be referred to
 \cite{bur15,bur16,bur17,bur19,bur20,bur21,bur22,bur23,bur24,bur18,bur25,bur26,bur27,bur28,bur29,bur30}
for further properties and applications of the Fu\v{c}ik spectrum.


The main purpose of this study is to construct some curves in the Fu\v{c}ik spectrum $\Sigma$ for (\ref{ee1}).
Using a minimax approach, we obtain three curves:
$\left\{\lambda_1\right\} \times \mathbb{R}, \mathbb{R} \times\left\{\lambda_1\right\}$, and
 and $\mathcal{C}=\left\{(p+c(p), c(p)): p\in \mathbb{R}\right\}$, where $c(p)$
with $p \in \mathbb{R}$ is a critical value of the functional $\tilde{I}_p$  as shown in Section 2.
We also study some properties of this curve $\mathcal{C}$, e.g. Lipschitz
continuous, strictly decreasing and asymptotic behavior.
As an application of the Fu\v{c}ik spectrum $\Sigma$, we study the  solvability of the following problem:
\begin{equation}\label{e51}
\left\{\begin{array}{l}
-\Delta u-\frac{1}{2}(x \cdot \nabla u)=f(x,u), \text{in}\   \mathbb{R}^N,\\
u\in X,
\end{array}\right.
\end{equation}
The equation (\ref{e51}) has been studied deeply by using the variational method and obtained certain results
(such as in \cite{bur33,bur31,bur32,bur42,bur34,bur35}  and its references).
 When $f(x, u)=u^{2^{\ast}-1}+\lambda|x|^{\alpha-1} u$, $\alpha>0$, Catrina et al. proved the existence of positive solution for the equation in \cite{bur43}.
 When $f(x, u)=\lambda u+|u|^{p-2} u, 2+2 / N<p<2^{\ast}, \lambda>0$, Haraux and Weissler obtained the existence of radial solution in \cite{bur36} (also see \cite{bur37,bur38}).
 When $f(x, u)=\lambda u+|u|^{p-2} u, \lambda>0$, Naito obtained multiplicity results for radial positive solutions of the equation in \cite{bur39}.
 When $f(x, u)= a(x)|u|^{q-2} u+b(x)|u|^{p-2} u, 1<q<2<p \leq 2^{\ast}$, Furtado et al.
 in \cite{bur40}  showed that the equation has at least two nonnegative nontrivial solutions.
 When $f(x, u)=\lambda|x|^\beta u+|u|^{2^{\ast}-2} u, 2 \leq 2^{\ast}, \lambda>0$, Furtado et al. in \cite{bur41}  obtained sign changing solution.

\section{Constructing curves using minimax methods}
In this section,  following the approach of Cuesta, de Figueiredo and Gossez \cite{bur4},
we use minimax theory to construct three curves belonging to $\Sigma$.
We shall denote by $X$ the Hilbert space obtained as the completion of $C_c^{\infty}\left(\mathbb{R}^N\right)$ with respect to the norm
$$\|u\|:=\left(\int_{\mathbb{R}^N}  |\nabla u|^2 K(x) dx\right)^{\frac{1}{2}}$$
which is induced by the inner product
$$
(u, v):=\int_{\mathbb{R}^N} (\nabla u \cdot \nabla v)K(x) dx.
$$
For each $q \in\left[2,2^*\right]$ we denote by $L_K^q$ the following space
$$
L_K^q:=\left\{u \text { measurable in } \mathbb{R}^N:|u|_{K,q}:=\left(\int_{\mathbb{R}^N} K(x)|u|^q\right)^{1 / q}<\infty\right\} .
$$
Due to the rapid decay at infinity of the functions belonging to $X$ we have the following embedding result proved in \cite{bur6}.
\begin{lemma}\label{le01}
The embedding $X \hookrightarrow L_K^q$ is continuous for all $q \in\left[2,2^*\right]$ and it is compact for all $q \in\left[2,2^*\right)$.
\end{lemma}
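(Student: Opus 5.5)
The strategy is to conjugate the weighted problem to an unweighted one by the substitution $v=K^{1/2}u$, where a confining potential appears that makes compactness work. Take $u\in C_c^\infty(\mathbb{R}^N)$ and set $v:=K^{1/2}u=e^{|x|^2/8}u$. Then
\[
\nabla v=K^{1/2}\left(\nabla u+\tfrac{x}{4}u\right).
\]
Expanding $|\nabla v|^2$ and integrating the cross term $\int K\,u\,x\cdot\nabla u=\tfrac12\int K\,x\cdot\nabla(u^2)$ by parts, using $\nabla\cdot(Kx)=K\bigl(N+\tfrac{|x|^2}{2}\bigr)$, gives the identity
\[
\int_{\mathbb{R}^N}|\nabla u|^2K\,dx=\int_{\mathbb{R}^N}|\nabla v|^2\,dx+\int_{\mathbb{R}^N}\Bigl(\frac{|x|^2}{16}+\frac N4\Bigr)v^2\,dx .
\]
It suffices to compute for $C_c^\infty$ functions, which is enough by density. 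This identity is the key step, and the sign bookkeeping in the cross term is the only real computation.

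Consequently $\|u\|^2\ge \int|\nabla v|^2+\frac{N}{4}\int v^2$ and $\|u\|^2\ge\frac1{16}\int|x|^2v^2$. Moreover $\int K|u|^2=\int v^2$, so the identity already yields the $q=2$ embedding, with the constant bound $|u|_{K,2}^2\le \frac4N\|u\|^2$.

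For $q\in(2,2^*]$ I would write
\[
\int K|u|^q=\int e^{-(q-2)|x|^2/8}|v|^q\le\int|v|^q .
\]
The standard Sobolev inequality on $H^1(\mathbb{R}^N)$ then gives $|v|_q\le C\|v\|_{H^1}\le C'\|u\|$, which proves continuity. For $N=1,2$ one should read $2^*=\infty$ and use the corresponding embedding $H^1\hookrightarrow L^q$ for all finite $q$.

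For compactness with $q<2^*$, let $u_n\rightharpoonup 0$ in $X$. Then $v_n$ is bounded in $H^1$ with $\int|x|^2v_n^2\le C$. By Rellich on balls, $v_n\to0$ in $L^2(B_R)$. On the complement, $\int_{|x|>R}v_n^2\le C/R^2$. Hence $v_n\to0$ in $L^2(\mathbb{R}^N)$, i.e. $u_n\to0$ in $L^2_K$. For $2<q<2^*$ I would interpolate: $|v_n|_q\le|v_n|_2^{\theta}|v_n|_{2^*}^{1-\theta}\to0$, taking any larger exponent when $N\le2$, and combine this with $\int K|u_n|^q\le\int|v_n|^q$. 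The main obstacle is making the limiting argument rigorous: one must pass the identity from $C_c^\infty$ to the completion $X$ and make sure the map $u\mapsto v$ is well defined and continuous into $H^1$. Since the identity shows this map is an isometry into a weighted $H^1$ norm, it extends by density.
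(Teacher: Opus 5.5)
Your proof is correct. The paper gives no proof of this lemma at all. It is quoted from Escobedo--Kavian \cite{bur6}, so there is no internal argument to compare against.

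Your ground-state transform $v=K^{1/2}u$ is, in substance, the standard proof of the cited result. The paper implicitly relies on it in Lemma \ref{l42}: the statement $\exp(|x|^2/8)\phi\in C^{0,1/2}(\mathbb{R})$ is just $v\in H^1(\mathbb{R})$. Your identity is equivalent to $K^{1/2}LK^{-1/2}=-\Delta+\frac{|x|^2}{16}+\frac N4$, and the cross-term bookkeeping checks out.

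What your version adds is self-containedness, and it can give more. If you use the sharp oscillator bound $\int|\nabla v|^2+\frac1{16}\int|x|^2v^2\ge\frac N4\int v^2$ instead of dropping those terms, you get $|u|_{K,2}^2\le\frac2N\|u\|^2$. That is exactly the Poincar\'e inequality with $\lambda_1=N/2$ used in Lemma \ref{le2}. The Hermite spectrum of the oscillator also yields Lemma \ref{le0}.

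Two small points to tighten:
\begin{itemize}
\item In the compactness step, Rellich only gives that a subsequence of $v_n$ converges in $L^2(B_R)$ to \emph{some} limit. To identify that limit as $0$, note that $u\mapsto v$ is bounded and linear, hence weakly continuous, so $v_n\rightharpoonup0$ in $H^1$. A sub-subsequence argument then gives convergence of the full sequence.
\item For $N\le2$ the stated range $q\in[2,2^*]$ has to be read as $q\in[2,\infty)$, which is what you do. The space $L^\infty_K$ is not defined by the paper's formula, and $q=\infty$ genuinely fails for $N=2$.
\end{itemize}
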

Hence, by Lemma \ref{le01}, for every $p \in\left[2,2^{\ast}\right)$, there exists $S_{K,p}>0$ such that
$$
|u|_{K,p} \leqslant S_{K,p}\|u\|, \quad u \in X .
$$
\begin{lemma}[\cite{bur6}]\label{le0}
The eigenvalues of $L$ are the numbers $\left(\lambda_k\right)_{k \geqslant 1}$, where
$$
\lambda_k=\frac{N+k-1}{2}.
$$
The eigenspaces are:
$$
N\left(L-\lambda_k\right)=\operatorname{Span}\left\{D^\beta \varphi_1 ; \quad|\beta|=k-1\right\}
$$
with $\phi_1(x):=\exp \left(-|x|^2 / 4\right), \beta \in N^N,|\beta|=\beta_1+\ldots+\beta_N$,
$
D^\beta=\partial_1^{\beta_1} \ldots \partial_N^\beta.
$
Moreover,
$$
\operatorname{dim} N\left(L-\lambda_k\right)=\binom{N+k-2}{N-1} .
$$
\end{lemma}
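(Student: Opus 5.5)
The plan is to conjugate $L$ to a harmonic-oscillator-type operator on $L^2(\mathbb{R}^N)$ and read off its spectrum. Set $u = K^{-1/2} v = e^{-|x|^2/8} v$. A direct computation with $K=e^{|x|^2/4}$ gives
\[
\int_{\mathbb{R}^N} |\nabla u|^2 K\,dx=\int_{\mathbb{R}^N}\Big(|\nabla v|^2+\frac{|x|^2}{16}v^2\Big)dx-\frac{N}{4}\int_{\mathbb{R}^N} v^2\,dx ,
\]
obtained after integrating the cross term $-\tfrac14\int x\cdot\nabla(v^2)=\tfrac N4\int v^2$ by parts. We also have $\int K u^2 = \int v^2$. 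Hence the eigenvalue problem $Lu=\lambda u$ in $X$ is equivalent to
\[
Hv:=-\Delta v+\frac{|x|^2}{16}v-\frac N4 v=\lambda v,\qquad v\in L^2(\mathbb{R}^N).
\]
Equivalently, the weighted Rayleigh quotient $\|u\|^2/|u|_{K,2}^2$ is the quadratic form of $H$. By Lemma \ref{le01}, the embedding $X\hookrightarrow L^2_K$ is compact, so $L$ has a compact resolvent and its spectrum is discrete, with eigenfunctions forming a basis.

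The spectrum of $H$ then follows from the classical harmonic oscillator. For $-\Delta+\omega^2|x|^2$ with $\omega=1/4$, the eigenvalues are $\omega(2|\gamma|+N)$ for $\gamma\in\mathbb{N}^N$. This gives $\frac{|\gamma|}{2}+\frac N4$, so the spectrum of $H$ is $\{\frac{|\gamma|}{2} : \gamma\in\mathbb{N}^N\}$.

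This does not match the statement, because the lowest value would be $0$ rather than $N/2$. I need to recheck the sign convention. Here $L=-\frac1K\nabla\cdot(K\nabla u)=-\Delta u-\frac x2\cdot\nabla u$. Testing the claimed eigenfunction, $\phi_1=e^{-|x|^2/4}$ has $\nabla\phi_1=-\frac x2\phi_1$ and $\Delta\phi_1=(\frac{|x|^2}{4}-\frac N2)\phi_1$. Therefore
\[
L\phi_1=\Big(-\frac{|x|^2}{4}+\frac N2+\frac{|x|^2}{4}\Big)\phi_1=\frac N2\phi_1 ,
\]
which is consistent with $\lambda_1=N/2$. So I must redo the conjugation carefully. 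The correct expansion is $|\nabla(e^{-|x|^2/8}v)|^2K=|\nabla v|^2-\frac{1}{4}v\,x\cdot\nabla v+\frac{|x|^2}{64}v^2$. The cross term integrates to $+\frac N8\int v^2$, giving $H=-\Delta+\frac{|x|^2}{64}+\frac N8$, and with $\omega=1/8$ this produces eigenvalues $\frac{|\gamma|}{4}+\frac N4$.

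That also disagrees with the statement. The mismatch shows the quadratic-form computation is error-prone, so it is not the route I would rely on. The main obstacle is precisely this normalization, and I would avoid it by working directly with $L$ as follows.

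First, compute the commutator $[\partial_j,L]$. Since $\partial_j(x\cdot\nabla u)=\partial_j u+x\cdot\nabla\partial_j u$, we get $L\partial_j u=\partial_j Lu-\frac12\partial_j u$. Hence if $Lw=\lambda w$, then $L(\partial_j w)=(\lambda+\frac12)\partial_j w$. Starting from $\phi_1$, this gives $L D^\beta\phi_1=(\frac N2+\frac{|\beta|}{2})D^\beta\phi_1=\lambda_{|\beta|+1}D^\beta\phi_1$. Each $D^\beta\phi_1$ is a polynomial times $\phi_1$, so it lies in $X$.

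Second, prove completeness. Write $D^\beta\phi_1=P_\beta\phi_1$, where $P_\beta$ is a Hermite-type polynomial of exact degree $|\beta|$ with leading term $(-x/2)^\beta$. These products span $\{P\phi_1 : P \text{ a polynomial}\}$. This set is dense in $L^2_K$, because $\int K(P\phi_1)g\,dx=\int P\,g\,dx$. If this vanishes for all $P$, then the function $g$, which lies in $L^2(e^{-|x|^2/4}dx)$ after the change of weights, must vanish by the density of polynomials in Gaussian $L^2$. Since $L$ is self-adjoint in $L^2_K$ with compact resolvent, these eigenfunctions exhaust the spectrum.

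Finally, the $D^\beta\phi_1$ with $|\beta|=k-1$ are linearly independent because their leading monomials differ. The dimension of the eigenspace is therefore the number of multi-indices of length $k-1$, which is $\binom{N+k-2}{N-1}$.
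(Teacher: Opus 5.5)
The paper gives no proof of this lemma; it is quoted from Escobedo--Kavian, so there is no internal argument to compare with. Your final, direct argument is a correct, self-contained proof:
\begin{itemize}
\item $L\phi_1=\frac N2\phi_1$, and each $\partial_j$ raises the eigenvalue by $\frac12$;
\item $D^\beta\phi_1=P_\beta\phi_1$, where $P_\beta$ has exact degree $|\beta|$ and leading term $(-x/2)^\beta$;
\item the set $\{P\phi_1\}$, $P$ a polynomial, is dense in $L^2_K$;
\item weak eigenfunctions for distinct eigenvalues are $L^2_K$-orthogonal.
\end{itemize}

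Three small repairs are needed.
\begin{itemize}
\item Your displayed commutator has the wrong sign. It should read $L\partial_j u=\partial_j Lu+\frac12\partial_j u$, which is what your conclusion actually uses.
\item In the density step it is $h=Kg$, not $g$, that lies in $L^2(e^{-|x|^2/4}dx)$. Indeed $\int h^2e^{-|x|^2/4}=\int Kg^2$ and $\int Pg=\int Ph\,e^{-|x|^2/4}$.
\item The last step should be written out. Let $u\in X$ be a weak eigenfunction with eigenvalue $\lambda$. Test its equation with $D^\beta\phi_1$, and the equation of $D^\beta\phi_1$ with $u$. This gives $(\lambda-\lambda_{|\beta|+1})\int K u\,D^\beta\phi_1=0$, so $\lambda\in\{\lambda_k\}$ by density. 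Then $u$ minus its $L^2_K$-projection onto $E_k=\operatorname{span}\{D^\beta\phi_1:|\beta|=k-1\}$ is orthogonal to a dense set, hence zero. Compactness of the resolvent is not needed here.
\end{itemize}

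The conjugation you abandoned was correct up to one sign. You computed the cross term as $+\frac N4\int v^2$ but then wrote $-\frac N4$. The right identity is
\[
\int_{\mathbb{R}^N}|\nabla u|^2K\,dx=\int_{\mathbb{R}^N}\Big(|\nabla v|^2+\frac{|x|^2}{16}v^2\Big)dx+\frac N4\int_{\mathbb{R}^N}v^2\,dx .
\]
So $H=-\Delta+\frac{|x|^2}{16}+\frac N4$ has eigenvalues $\frac{|\gamma|}{2}+\frac N2=\lambda_{|\gamma|+1}$, exactly as claimed.

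Your ``corrected'' expansion is the wrong one. Since $\nabla e^{-|x|^2/8}=-\frac x4e^{-|x|^2/8}$, the coefficients are $-\frac12$ and $\frac1{16}$, not $-\frac14$ and $\frac1{64}$.

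Along that route, the oscillator eigenfunctions $\prod_j\mathcal{H}_{\gamma_j}(x_j/2)e^{-|x|^2/8}$ ($\mathcal{H}_n$ the physicists' Hermite polynomials) are sent by $u=e^{-|x|^2/8}v$ to $\prod_j\mathcal{H}_{\gamma_j}(x_j/2)\phi_1=(-2)^{|\gamma|}D^\gamma\phi_1$, by Rodrigues' formula. So eigenvalues, eigenspaces and completeness all follow at once from the classical harmonic oscillator.

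Your direct route replaces that normalization bookkeeping with an elementary ladder-operator computation that explains why the $D^\beta\phi_1$ appear. Both routes rest on the same classical input: density of polynomials in Gaussian $L^2$, i.e.\ completeness of the Hermite functions.
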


We define the nontrivial Fu\v{c}ik spectrum for  problem (\ref{ee1}) as a set $\Sigma_0$ comprising
those $(\alpha, \beta) \in \mathbb{R}^2$ such that (\ref{ee1}) has a sign changing solution.
A function $u \in X$ is a weak solution of (\ref{ee1}), if for every $v\in X$ satisfies
$$
\int_{\mathbb{R}^N}\nabla u \nabla v K(x)=\alpha \int_{\mathbb{R}^N} u^{+} v K(x) d x-\beta \int_{\mathbb{R}^N} u^{-} v K(x) d x.
$$
Weak solutions of (\ref{ee1}) are exactly the critical points of the functional $J: X \rightarrow \mathbb{R}$ defined as
$$
J(u)=\frac{1}{2} \int_{\mathbb{R}^N}|\nabla u|^2 K(x)dx-\frac{\alpha}{2} \int_{\mathbb{R}^N}\left(u^{+}\right)^2 K(x)d x-\frac{\beta}{2} \int_{\mathbb{R}^N}\left(u^{-}\right)^2 K(x)d x.
$$
Then $J$ is Fr\'{e}chet differentiable in $X$ and
$$
\left\langle J^{\prime}(u), \phi\right\rangle=\int_{\mathbb{R}^N}\nabla u \nabla \phi K(x)dx
-\alpha \int_{\mathbb{R}^N} u^{+} \phi K(x) d x-\beta \int_{\mathbb{R}^N} u^{-} \phi K(x)dx.
$$

Let us
consider the following functional:
$$
I_p(u)= \int_{\mathbb{R}^N} |\nabla u|^2 K(x)d x-p \int_{\mathbb{R}^N}\left(u^{+}\right)^2 K(x) d x.
$$
Obviously, $I_p \in C^1(X, \mathbb{R})$ for all $p \in \mathbb{R}$. We seek  the critical points of the restriction of $I_p$ to
$$
\mathcal{P}=\left\{u \in X: \int_{\mathbb{R}^N}u^2 K(x) d x=1\right\}.
$$
Let $\tilde{I}_P=I_P|_\mathcal{P}$. Then $\tilde{I}_P\in  C^1(X, \mathbb{R})$.
We easily  know that $u \in \mathcal{P}$ is a critical point of $\tilde{I}_p$ if and only if there exists $t \in \mathbb{R}$ such that
\begin{equation}\label{ee3}
 \int_{\mathbb{R}^N}\nabla u\nabla v K(x) d x-p \int_{\mathbb{R}^N} u^{+} v K(x)d x=t \int_{\mathbb{R}^N} u v K(x)d x
\end{equation}
for all $v \in X$. Hence $u \in \mathcal{P}$ is a nontrivial weak solution of the problem
\begin{equation}\label{ee35}
\left\{\begin{array}{l}
-L u  =(p+t) u^{+}-t u^{-}, \quad \text { in } \mathbb{R}^N, \\
u\in X.
\end{array}\right.
\end{equation}
This means that $(p+t,t)\in \Sigma$. Setting $u=v$ in \eqref{ee3}, we have  $t=\tilde{I}_p(u)$.
Hence, we have the following result, which describes the relationship between the critical points of $\tilde{I}_p$ and the spectrum $\Sigma$.

\begin{lemma}\label{le1}
 For $p \geq 0,(p+t, t) \in \mathbb{R}^2$ belongs to the spectrum $\Sigma$ if and only if there exists a critical point $u \in \mathcal{P}$ of $\tilde{I}_p$ such that $t=\tilde{I}_p(u)$, a critical value.
\end{lemma}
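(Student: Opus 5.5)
The statement is an equivalence, and I will prove the two directions separately. Both rest on the Lagrange multiplier description of critical points of $\tilde I_p$ on the $C^1$ manifold $\mathcal P$ given in \eqref{ee3}.

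\emph{Critical point implies spectrum.} Let $u\in\mathcal P$ be a critical point of $\tilde I_p$. The map $G(u)=\int_{\mathbb{R}^N}u^2K\,dx$ is $C^1$ on $X$, since $X\hookrightarrow L^2_K$ continuously by Lemma \ref{le01}. Its derivative $\langle G'(u),v\rangle=2\int uvK$ is nonzero on $\mathcal P$, because testing with $v=u$ gives $2$. So $\mathcal P$ is a $C^1$ submanifold, and the Lagrange multiplier rule yields $t\in\mathbb R$ with \eqref{ee3}. Since $u=u^+-u^-$, we have $pu^++tu=(p+t)u^+-tu^-$. Hence \eqref{ee3} is exactly the weak formulation of \eqref{ee1} with $\alpha=p+t$ and $\beta=t$. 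Moreover $u\neq0$ because $\int u^2K=1$, so $(p+t,t)\in\Sigma$. To identify $t$, I take $v=u$ in \eqref{ee3}. Using $\int u^+uK=\int (u^+)^2K$ and $\int u^2K=1$, this gives $t=\int|\nabla u|^2K-p\int(u^+)^2K=\tilde I_p(u)$, so $t$ is a critical value.

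\emph{Spectrum implies critical point.} Let $(p+t,t)\in\Sigma$, and let $w\neq0$ in $X$ be a weak solution of \eqref{ee1} with $\alpha=p+t$, $\beta=t$. The equation is positively homogeneous of degree one: if $w$ solves it, so does $sw$ for every $s>0$, since $(sw)^\pm=sw^\pm$. We also have $\int w^2K>0$. Indeed $\int w^2K=0$ would force $w=0$ a.e., since $K>0$, and the embedding of Lemma \ref{le01} is injective on $X$. So I set $u=w/(\int w^2K)^{1/2}\in\mathcal P$, which is still a weak solution. Reversing the algebra above, $u$ satisfies \eqref{ee3} for all $v\in X$. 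By the multiplier characterization this means $\tilde I_p'(u)=0$ on the tangent space $T_u\mathcal P=\{v:\int uvK=0\}$, so $u$ is a critical point. Testing with $v=u$ again gives $t=\tilde I_p(u)$.

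The argument is essentially formal; the hypothesis $p\ge0$ plays no real role. The only point needing care is the regularity setup: $I_p\in C^1$, with $u\mapsto\int (u^+)^2K$ differentiable with derivative $2\int u^+vK$, and $\mathcal P$ a regular level set. These follow from the continuity of $X\hookrightarrow L^2_K$ in Lemma \ref{le01} together with the standard differentiability of $s\mapsto (s^+)^2/2$, whose derivative $s^+$ is Lipschitz. I regard this $C^1$ and manifold verification as the only step that is more than algebra, and it is routine.
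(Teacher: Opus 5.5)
Your proof is correct and follows the paper's route: the Lagrange multiplier characterization \eqref{ee3} of critical points of $\tilde I_p$ on $\mathcal P$, the rewriting $pu^+ + tu = (p+t)u^+ - tu^-$, and testing with $v=u$ to identify $t=\tilde I_p(u)$. You also spell out the converse direction (normalizing a nontrivial solution into $\mathcal P$ by positive homogeneity), which the paper leaves implicit, and you correctly observe that the assumption $p\ge 0$ is never used.
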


\begin{lemma}\label{le2}
 Assume that $\phi_1$  is the first eigenfunction  of $L$.
 Then $\phi_1$  is a global minimum for $\tilde{I}_p$ with $\tilde{I}_p\left(\phi_1\right)=\lambda_1-p$. The corresponding point in $\Sigma$ is $\left(\lambda_1, \lambda_1-p\right)$ which lies on the vertical line through $\left(\lambda_1, \lambda_1\right)$.
\end{lemma}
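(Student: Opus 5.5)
The plan is to compare $\tilde I_p$ with the Rayleigh quotient of $L$ and use the sign of $\phi_1$. Throughout, $\phi_1$ means the normalized first eigenfunction $c\,e^{-|x|^2/4}$ with $c>0$ chosen so that $\phi_1\in\mathcal P$. This is possible because $\phi_1\in X$:
\[
\int_{\mathbb R^N}|\nabla \phi_1|^2K\,dx=c^2\int_{\mathbb R^N}\tfrac{|x|^2}{4}e^{-|x|^2/4}\,dx<\infty ,
\]
and the same holds for $\int\phi_1^2K$. One also checks that $\phi_1$ is a limit in $\|\cdot\|$ of cut-offs in $C_c^\infty$, by dominated convergence. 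As in Lemma \ref{le1}, we take $p\ge 0$.

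\textbf{Step 1: variational characterization of $\lambda_1$.} We need
\[
\lambda_1=\inf_{u\in\mathcal P}\int_{\mathbb R^N}|\nabla u|^2K\,dx .
\]
By Lemma \ref{le01}, the embedding $X\hookrightarrow L^2_K$ is compact. Hence the bilinear form $(u,v)$ on $X\subset L^2_K$ defines a self-adjoint operator, namely $L$, with compact resolvent. Its eigenfunctions therefore form an orthonormal basis of $L^2_K$, and by Lemma \ref{le0} the eigenvalues are $\lambda_k\ge\lambda_1=N/2$. Expanding $u\in\mathcal P$ in this basis gives $\|u\|^2=\sum_k\lambda_k|a_k|^2\ge\lambda_1\sum_k|a_k|^2=\lambda_1$, with equality for $u=\phi_1$.

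An alternative is to minimize $\|u\|^2$ on $\mathcal P$ directly. A minimizing sequence converges weakly in $X$ and strongly in $L^2_K$ by compactness, so the minimum is attained. The minimizer solves $Lu=\mu u$, so $\mu$ is an eigenvalue, and Lemma \ref{le0} gives $\mu\ge\lambda_1$. I expect this identification of the infimum to be the only nontrivial point, since the domain is all of $\mathbb R^N$. Lemma \ref{le01} plus Lemma \ref{le0} settles it.

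\textbf{Step 2: lower bound and equality.} For $u\in\mathcal P$ we have $(u^+)^2\le u^2$ and $p\ge0$, so
\[
\tilde I_p(u)\ge \|u\|^2-p\int_{\mathbb R^N}u^2K\,dx=\|u\|^2-p\ge \lambda_1-p .
\]
Since $\phi_1>0$, we have $\phi_1^+=\phi_1$, and integrating the eigenvalue equation against $\phi_1$ gives $\|\phi_1\|^2=\lambda_1$. Therefore $\tilde I_p(\phi_1)=\lambda_1-p$, and $\phi_1$ is a global minimum of $\tilde I_p$ on $\mathcal P$.

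\textbf{Step 3: criticality and the point of $\Sigma$.} $\mathcal P$ is a $C^1$ manifold, because it is the level set of $u\mapsto\int u^2K$, whose derivative $2u$ is nonzero on $\mathcal P$. A global minimum of the $C^1$ functional $I_p$ restricted to $\mathcal P$ is a constrained critical point. Lemma \ref{le1} with $t=\tilde I_p(\phi_1)=\lambda_1-p$ then gives $(p+t,t)=(\lambda_1,\lambda_1-p)\in\Sigma$. This point lies on the vertical line $\{\lambda_1\}\times\mathbb R$ through $(\lambda_1,\lambda_1)$.
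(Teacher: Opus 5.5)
Your proof is correct and follows the paper's argument: you bound $\tilde I_p(u)\ge \|u\|^2-p\ge\lambda_1-p$ on $\mathcal P$ and note that $\phi_1$ attains this value. Beyond the paper, you justify the Poincar\'e-type inequality $\|u\|^2\ge\lambda_1|u|_{K,2}^2$ (via compactness and Lemma \ref{le0}) and state the restriction $p\ge 0$ that the paper uses silently; this restriction is needed, since for $p<0$ one has $\tilde I_p(-\phi_1)=\lambda_1<\lambda_1-p$.
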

\begin{proof}
We easily know that
$\tilde{I}_p\left(\phi_1\right)=\lambda_1-p$ and
$$
\begin{aligned}
\tilde{I}_p(u) & =\int_{\mathbb{R}^N}|\nabla u|^2 K(x) d x -p \int_{\mathbb{R}^N}\left(u^{+}\right)^2 K(x)d x \\
& \geq \lambda_1 \int_{\mathbb{R}^N} u^2 K(x)d x-p \int_{\mathbb{R}^N}\left(u^{+}\right)^2 K(x)d x \geq \lambda_1-p.
\end{aligned}
$$
So $\phi_1$ is a global minimum of $\tilde{I}_p$ with $\tilde{I}_p\left(\phi_1\right)=\lambda_1-p$.

\end{proof}
\begin{lemma}\label{le3}
   The negative eigenfunction $-\phi_1$ is a strict local minimum for $\tilde{I}_p$ with $\tilde{I}_p\left(-\phi_1\right)=\lambda_1$.
    The corresponding point in $\Sigma$ is $\left(\lambda_1+p, \lambda_1\right)$, which lies on the horizontal line through $\left(\lambda_1, \lambda_1\right)$.
\end{lemma}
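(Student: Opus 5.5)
We prove that $-\phi_1$ (normalized so that $\int_{\mathbb{R}^N}\phi_1^2K\,dx=1$, as implicit in Lemma \ref{le2}) is a strict local minimum of $\tilde I_p$ on $\mathcal{P}$, with $p\ge 0$ as in Lemma \ref{le1}. The value is immediate. Since $(-\phi_1)^+=0$, we get $\tilde I_p(-\phi_1)=\int|\nabla\phi_1|^2K\,dx=\lambda_1$. Also $-\phi_1\in\mathcal{P}$ is a critical point of $\tilde I_p$: in \eqref{ee3} the $p$-term vanishes, and $t=\lambda_1$ works because $\phi_1$ solves $L\phi_1=\lambda_1\phi_1$. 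So $(p+\lambda_1,\lambda_1)\in\Sigma$ by Lemma \ref{le1}. The real task is strict local minimality, and we argue by contradiction, following Cuesta--de Figueiredo--Gossez \cite{bur4}.

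Suppose there are $u_n\in\mathcal{P}$ with $u_n\ne-\phi_1$, $u_n\to-\phi_1$ in $X$, and $\tilde I_p(u_n)\le\lambda_1$.

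\emph{Step 1: $u_n$ must change sign.} If $u_n\le0$, then $\tilde I_p(u_n)=\|u_n\|^2\ge\lambda_1$. Equality forces $u_n$ to be a first eigenfunction, since $\lambda_1$ is simple with eigenspace spanned by $\phi_1$ (Lemma \ref{le0}). Because $u_n\in\mathcal{P}$ and $u_n\le 0$, this gives $u_n=-\phi_1$, a contradiction. If $u_n\ge0$, then $u_n$ cannot be close to $-\phi_1$ in $L^2_K$, which is excluded for large $n$. Hence $u_n^+\ne0$, and also $u_n^-\ne0$.

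\emph{Step 2: split the functional.} Write $\tilde I_p(u_n)=\|u_n^+\|^2-p|u_n^+|_{K,2}^2+\|u_n^-\|^2$ and use $\|u_n^-\|^2\ge\lambda_1|u_n^-|_{K,2}^2$ together with $|u_n^+|_{K,2}^2+|u_n^-|_{K,2}^2=1$. This gives
\[
\lambda_1\ge \tilde I_p(u_n)\ge \|u_n^+\|^2-p|u_n^+|_{K,2}^2+\lambda_1\bigl(1-|u_n^+|_{K,2}^2\bigr),
\]
that is,
\[
\|u_n^+\|^2\le(\lambda_1+p)\,|u_n^+|_{K,2}^2 .
\]

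\emph{Step 3: contradiction via the support of $u_n^+$.} Set $w_n=u_n^+/|u_n^+|_{K,2}$. Then $\|w_n\|^2\le\lambda_1+p$, so $(w_n)$ is bounded in $X$. By the compact embedding of Lemma \ref{le01}, a subsequence converges weakly in $X$ and strongly in $L^2_K$ and $L^q_K$ ($2<q<2^*$) to some $w$ with $|w|_{K,2}=1$ and $w\ge0$. On the other hand $u_n\to-\phi_1$ in $L^2_K$ and $\phi_1>0$ everywhere. Hence the sets $\Omega_n=\{u_n>0\}$ satisfy $\int_{\Omega_n}\phi_1^2K\,dx\le\int(u_n+\phi_1)^2K\,dx\to0$, so $\Omega_n$ has vanishing $\phi_1^2K$-measure, and therefore vanishing Lebesgue measure on every ball.

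To conclude, fix $R$. On $B_R$ the weight $K$ is bounded above and below, so Hölder and Sobolev give $\int_{B_R}w_n^2K\le|\Omega_n\cap B_R|^{1-2/q}\,C_R\,|w_n|_{K,q}^2\to0$. Hence $w=0$ on every $B_R$, so $w=0$, contradicting $|w|_{K,2}=1$.

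\emph{Main obstacle.} This is the unbounded domain: we cannot simply say $|\Omega_n|\to0$ globally. The fix is to localize to balls as above and use strong $L^2_K$ convergence of $w_n$, which comes from the compactness in Lemma \ref{le01} exactly because of the rapid growth of $K$. Strong convergence removes any mass at infinity, so smallness of the support on each ball is enough.
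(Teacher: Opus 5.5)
Your proof is correct and is essentially the paper's argument: sign change of $u_n$, the normalized positive parts $w_n$, the inequality $\|w_n\|^2\le\lambda_1+p$ obtained by splitting the energy, and compactness of $X\hookrightarrow L^2_K$ played against the shrinking positivity set. The only difference in order is that the paper first shows $r_k=\|w_k\|^2\to\infty$ by contradiction and then contradicts that same inequality, whereas you derive the inequality first.

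Your localization to balls $B_R$ is more careful than the paper's measure step. The paper passes from $|\{u_k\ge\varepsilon\}|\to0$ to $|\{w_k\ge\varepsilon\}|\to0$, which does not follow because $w_k=u_k^+/|u_k^+|_{K,2}$ with $|u_k^+|_{K,2}\to0$. The relevant set $\{u_k>0\}$ may even have infinite Lebesgue measure on $\mathbb{R}^N$; only its $\phi_1^2K\,dx$-measure is forced to vanish, and that is exactly what you exploit.
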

\begin{proof}
We assume by contradiction  that there exists a sequence $u_k \in \mathcal{P}$, $u_k \neq-\phi_1$ with $\tilde{I}_p\left(u_k\right) \leq \lambda_1$, $u_k \rightarrow-\phi_1$ in $X$. Firstly, we claim  that $u_k$ changes $\operatorname{sign}$ for sufficiently large $k$.  Indeed, since $u_k \rightarrow-\phi_1$,
then we have $u_k < 0$ somewhere for the sufficiently large $k$.
Moreover, if $u_k \leq 0$ for a.e $x \in \mathbb{R}^N$, then
$$
\tilde{I}_p\left(u_k\right)=\int_{\mathbb{R}^N}|\nabla u_k|^2 K(x) d x>\lambda_1.
$$
  Since $u_k \neq \pm \phi_1$ and we get contradiction as $\tilde{I}_p\left(u_k\right) \leq \lambda_1$. So $u_k$ changes sign for sufficiently large $k$. Define $w_k:=\frac{u_k^{+}}{\left|u_k^{+}\right|_{K,2}}$ and
$$
r_k:=\int_{\mathbb{R}^N}|\nabla w_k|^2  K(x) d x.
$$
Now we show that $r_k \rightarrow \infty$. We assume by contradiction  that $r_k$ is bounded.
Then there exists a subsequence of $w_k$ still denoted by $w_k$ and $w \in X$ such that $w_k \rightharpoonup w$ weakly in $X$ and $w_k \rightarrow w$ strongly in $L_K^2\left(\mathbb{R}^n\right)$.
Hence $\int_{\mathbb{R}^N} w^2 K(x)d x=1, w \geq 0$ a.e. on $\mathbb{R}^N$
and so for some $\varepsilon>0$, $\delta=\left|\left\{x \in \mathbb{R}^N: w(x) \geq \varepsilon\right\}\right|>0$.
 As $u_k \rightarrow-\phi_1$ in $X$ and hence in $L_{K}^2(\mathbb{R}^N)$.
 Therefore $\left|\left\{x \in \mathbb{R}^N: u_k(x) \geq \varepsilon\right\}\right| \rightarrow 0$ as $k \rightarrow \infty$ and so $\left|\left\{x \in \mathbb{R}^N: w_k(x) \geq \varepsilon\right\}\right| \rightarrow 0$
 as $k \rightarrow \infty$ which is a contradiction as $\delta>0$. Hence, $r_k \rightarrow \infty$. Now we have
\begin{equation}\label{ee4}
  \begin{aligned}
\tilde{I}_p(u_k) & =\int_{\mathbb{R}^N}|\nabla u_k|^2 K(x) d x-p \int_{\mathbb{R}^N}\left(u_k^{+}\right)^2 K(x) d x \\
& =\int_{\mathbb{R}^N}|\nabla (u^{+}_k-u^{-}_k)|^2 K(x) d x-p \int_{\mathbb{R}^N}\left(u_k^{+}\right)^2 K(x) d x\\
& =  \int_{\mathbb{R}^N}|\nabla u^{+}_k|^2K(x) d x +\int_{\mathbb{R}^N} |\nabla u^{-}_k|^2 K(x) d x-p \int_{\mathbb{R}^N}\left(u_k^{+}\right)^2 K(x) d x\\
& \geq  (r_k-p) \int_{\mathbb{R}^N}\left(u_k^{+}\right)^2 K(x) d x +\lambda_1\int_{\mathbb{R}^N} ( u^{-}_k)^2 K(x) d x.
\end{aligned}
\end{equation}
Since  $u_k \in \mathcal{P}$, we obtain
\begin{equation}\label{ee5}
  \tilde{I}_p\left(u_k\right) \leq \lambda_1=\lambda_1 \int_{\mathbb{R}^N}\left(u_k^{+}\right)^2 K(x)d x+\lambda_1 \int_{\mathbb{R}^N}\left(u_k^{-}\right)^2 K(x)d x.
\end{equation}
Using \eqref{ee4} and \eqref{ee5}, we have,
$$
\left(r_k-p\right) \int_{\mathbb{R}^N}\left(u_k^{+}\right)^2 K(x) d x+\lambda_1 \int_{\mathbb{R}^N}\left(u_k^{-}\right)^2 d x \leq \lambda_1 \int_{\mathbb{R}^N}\left(u_k^{+}\right)^2 K(x) d x+\lambda_1 \int_{\mathbb{R}^N}\left(u_k^{-}\right)^2 K(x)d x .
$$
Hence, $$\left(r_k-p-\lambda_1\right) \int_{\mathbb{R}^N}\left(u_k^{+}\right)^2 K(x)d x \leq 0.$$
Then $r_k-p \leq \lambda_1$, which contradicts the fact that $r_k \rightarrow+\infty$.
\end{proof}

\begin{lemma}\label{le4}(\cite{bur2})
Let $E$ be a Banach space, $g, f \in C^1(E, \mathbb{R}), M=\{u \in E \mid g(u)=1\}$ and $u_0, u_1 \in M$. Let $\varepsilon>0$ such that $\left\|u_1-u_0\right\|>\varepsilon$ and
$$
\inf \left\{f(u): u \in M \text { and }\left\|u-u_0\right\|_E=\varepsilon\right\}>\max \left\{f\left(u_0\right), f\left(u_1\right)\right\} .
$$
Assume that $f$ satisfies the PS condition on $M$ and that
$$
\Gamma=\left\{\gamma \in C([-1,1], M): \gamma(-1)=u_0 \text { and } \gamma(1)=u_1\right\}
$$
is non empty. Then $$c=\inf _{\gamma \in \Gamma} \max _{u \in \gamma[-1,1]} f(u)$$ is a critical value of $\left.f\right|_M$.
\end{lemma}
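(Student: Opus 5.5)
The plan is to argue by contradiction through a deformation lemma on $M$, in the spirit of the classical mountain pass theorem. Throughout I assume, as is implicit in the statement and true in our application where $g(u)=\int_{\mathbb{R}^N}u^2K(x)\,dx$, that $1$ is a regular value of $g$. That is, $g'(u)\neq 0$ for every $u\in M$, so that $M$ is a $C^1$ submanifold of codimension one.

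\textbf{Step 1: the minimax level sits strictly above the endpoint levels.} Set $a=\max\{f(u_0),f(u_1)\}$ and $b=\inf\{f(u): u\in M,\ \|u-u_0\|_E=\varepsilon\}$, so that $b>a$ by hypothesis. Take any $\gamma\in\Gamma$. The function $t\mapsto\|\gamma(t)-u_0\|_E$ is continuous, equals $0$ at $t=-1$, and exceeds $\varepsilon$ at $t=1$. Hence $\gamma$ meets the sphere $\{\|u-u_0\|_E=\varepsilon\}\cap M$, which gives $\max_{\gamma}f\ge b$. Since $\Gamma\neq\emptyset$, this shows $c$ is finite and $c\ge b>a$.

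\textbf{Step 2: a uniform lower bound on the constrained derivative.} Suppose $c$ is not a critical value of $f|_M$. I claim there are $\sigma>0$ and $0<\delta<(c-a)/2$ such that
$$\|(f|_M)'(u)\|_{*}\ge\sigma \quad\text{whenever } u\in M,\ |f(u)-c|\le 2\delta.$$
If this failed, we would obtain a sequence $u_n\in M$ with $f(u_n)\to c$ and $(f|_M)'(u_n)\to0$. The PS condition on $M$ would then yield a subsequence converging to some $u\in M$. By continuity of $f$ and $f'$, and of the Lagrange-multiplier representation $(f|_M)'(u)=f'(u)-\mu(u)g'(u)$ (valid because $g'(u)\neq0$), the limit $u$ would be a critical point at level $c$, a contradiction.

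\textbf{Step 3: the deformation and the contradiction.} Next I construct a locally Lipschitz pseudo-gradient vector field $V$ that is tangent to $M$ on $\tilde M=\{u\in M:(f|_M)'(u)\neq0\}$. It should satisfy $\|V(u)\|\le 2\|(f|_M)'(u)\|_*$ and $\langle f'(u),V(u)\rangle\ge\|(f|_M)'(u)\|_*^2$. Using a cutoff supported in $f^{-1}[c-2\delta,c+2\delta]$, I integrate $-V$ to obtain a deformation $\eta:[0,1]\times M\to M$ with three properties:
\begin{itemize}
\item $\eta(s,u)=u$ whenever $f(u)\notin[c-2\delta,c+2\delta]$;
\item $f(\eta(s,u))$ is nonincreasing in $s$;
\item $\eta(1,\{f\le c+\delta\}\cap M)\subset\{f\le c-\delta\}$, after possibly shrinking $\delta$ relative to $\sigma$.
\end{itemize}
Now choose $\gamma\in\Gamma$ with $\max_\gamma f\le c+\delta$. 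Because $f(u_i)\le a<c-2\delta$, the endpoints are fixed by $\eta$, so $\eta(1,\gamma(\cdot))\in\Gamma$. Its maximum is at most $c-\delta<c$, which contradicts the definition of $c$.

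\textbf{Main obstacle.} The hardest step is building the pseudo-gradient field so that it is both locally Lipschitz and exactly tangent to $M$, since $g$ is only $C^1$. The natural projection
$$w\mapsto w-\frac{g'(v)w}{g'(v)z}\,z$$
is then merely continuous in $v$. I would try two routes.
\begin{itemize}
\item \emph{General case.} Appeal to Palais' theory: a $C^1$ submanifold of a Banach space is a $C^1$ Finsler manifold, and on such manifolds locally Lipschitz tangent pseudo-gradient fields exist. Construct them chartwise and glue with a locally Lipschitz partition of unity.
\item \emph{Our application.} Here $g$ is a quadratic form, so $g'$ is Lipschitz. The projection is then locally Lipschitz, and the flow stays on $M$ because $\frac{d}{ds}g(\eta)=g'(\eta)V(\eta)=0$.
\end{itemize}
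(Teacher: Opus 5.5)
The paper gives no proof of this lemma (it is quoted from the literature), so your argument has to stand on its own. Steps~1 and~2 are fine. Step~1 is exactly where $\|u_1-u_0\|>\varepsilon$ and the strict inequality on the sphere enter. Your multiplier argument in Step~2 is correct once $g'\neq 0$ on $M$, an assumption you rightly make explicit; it holds on $\mathcal{P}$ since $g'(u)u=2$.

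\textbf{The gap is in Step~3, at the generality of the statement, $g\in C^1$.} Your ``general case'' claims that a $C^1$ submanifold carries locally Lipschitz tangent pseudo-gradient fields, built chartwise and glued by a partition of unity. This fails. Transition maps between $C^1$ charts are only $C^1$, so a field that is Lipschitz in one chart is merely continuous in another. The glued field is therefore Lipschitz in no chart, and in infinite dimensions a merely continuous field need not generate any flow.

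Worse, ambient locally Lipschitz tangent pseudo-gradients need not exist at all. In $E=\mathbb{R}^2$ take $g(x,y)=1+y-|x|^{3/2}$, which is $C^1$ with $g'\neq0$ and $M=\{y=|x|^{3/2}\}$, and take $f(x,y)=x$. Every tangent field has the form $V(x,|x|^{3/2})=a(x)\bigl(1,\tfrac32\,\mathrm{sgn}(x)|x|^{1/2}\bigr)$. The pseudo-gradient inequality at the origin forces $a(0)\ge 1$. Hence the second component is of order $|x|^{1/2}$, and $V$ is not Lipschitz near $0$. So the deformation $\eta$ cannot be obtained as you describe; this is precisely the difficulty that a merely $C^1$ constraint creates.

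You can close the gap in one of two ways.

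(a) Assume $g\in C^{1,1}_{\mathrm{loc}}$. Then your second route is complete:
\begin{itemize}
\item Pick $w_u\in T_uM$ with $\|w_u\|<2\|(f|_M)'(u)\|_*$ and $f'(u)w_u>\|(f|_M)'(u)\|_*^2$, and choose $z_u$ with $g'(u)z_u\neq0$.
\item The local fields $v\mapsto w_u-\frac{g'(v)w_u}{g'(v)z_u}\,z_u$ keep these inequalities near $u$ and lie in $\ker g'(v)$.
\item Glue them with a locally Lipschitz partition of unity on an open neighbourhood of $\tilde M$ in $E$. This gives a locally Lipschitz field along whose trajectories $g$ is constant.
\end{itemize}
This covers every use of the lemma in the paper, where $g(u)=\int_{\mathbb{R}^N}u^2K(x)\,dx$ is quadratic.

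(b) For genuinely $C^1$ $g$, you need a deformation principle that avoids ODE flows. Options are Bonnet's deformation lemma on $C^1$ manifolds, or the deformation theorem for continuous functionals on the complete metric space $M$ (weak-slope theory). There, local deformations $v\mapsto v-tw+s(v,t)z$, with $s(v,t)=o(t)$ supplied by the implicit function theorem, are glued by partitions of unity rather than integrated.

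Finally, a routine point: $\|V\|\le 2\|(f|_M)'\|_*$ need not be bounded. You should note that trajectories survive up to time $1$ because $\|\dot\eta\|^2\le -4\,\frac{d}{ds}f(\eta)$ bounds their length.
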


\begin{lemma}\label{le5}
$\tilde{I}_p$  satisfies the PS condition on $\mathcal{P}$.
\end{lemma}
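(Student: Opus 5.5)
Let $(u_k)\subset\mathcal{P}$ be a (PS) sequence for $\tilde{I}_p$. This means $\tilde{I}_p(u_k)\to c$ bounded, and there are $t_k\in\mathbb{R}$ with
\begin{equation*}
\int_{\mathbb{R}^N}\nabla u_k\nabla v K(x)dx-p\int_{\mathbb{R}^N}u_k^{+}vK(x)dx-t_k\int_{\mathbb{R}^N}u_kvK(x)dx=\langle \varepsilon_k,v\rangle,\quad \|\varepsilon_k\|_{X^*}\to0 .
\end{equation*}
Here we use the standard description of the derivative of a $C^1$ functional restricted to the $C^1$ manifold $\mathcal{P}=\{g=1\}$, with $g(u)=|u|_{K,2}^2$, whose derivative does not vanish on $\mathcal{P}$. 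We must extract a subsequence converging strongly in $X$.

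\emph{Step 1 (boundedness).} Since $|u_k|_{K,2}=1$, we have
\begin{equation*}
\|u_k\|^2=\tilde{I}_p(u_k)+p|u_k^+|_{K,2}^2\le C+|p|.
\end{equation*}
Hence $(u_k)$ is bounded in $X$.

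\emph{Step 2 (bounded multipliers).} Testing the identity with $v=u_k$ gives
\begin{equation*}
t_k=\|u_k\|^2-p|u_k^+|_{K,2}^2-\langle\varepsilon_k,u_k\rangle .
\end{equation*}
So $(t_k)$ is bounded and, passing to a subsequence, $t_k\to t$.

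\emph{Step 3 (compactness).} By reflexivity and Lemma \ref{le01} (compactness of $X\hookrightarrow L^2_K$), after extracting a subsequence, $u_k\rightharpoonup u$ weakly in $X$ and $u_k\to u$ strongly in $L^2_K$. In particular $u\in\mathcal{P}$. Also $u_k^+\to u^+$ in $L^2_K$, because $|a^+-b^+|\le|a-b|$.

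\emph{Step 4 (strong convergence).} Test the identity with $v=u_k-u$. The lower-order terms satisfy
\begin{equation*}
\left|\int u_k^+(u_k-u)K\right|\le |u_k^+|_{K,2}\,|u_k-u|_{K,2}\to0,
\end{equation*}
and similarly $t_k\int u_k(u_k-u)K\to0$. Moreover $|\langle\varepsilon_k,u_k-u\rangle|\le\|\varepsilon_k\|\,\|u_k-u\|\to0$ by boundedness. Therefore $(u_k,u_k-u)\to0$. Weak convergence gives $(u,u_k-u)\to0$. Subtracting,
\begin{equation*}
\|u_k-u\|^2=(u_k,u_k-u)-(u,u_k-u)\to0 .
\end{equation*}
So $u_k\to u$ in $X$.

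\emph{Main obstacle.} Since the problem is posed on all of $\mathbb{R}^N$, the only delicate point is compactness in Step 3. This is exactly what the weighted embedding of Lemma \ref{le01} supplies: the weight $K$ grows fast enough to prevent loss of mass at infinity. The remaining steps are routine, with one further point needing care: making the Lagrange-multiplier form of the (PS) condition on $\mathcal{P}$ precise, that is, writing the tangential derivative norm as $\min_t\|I_p'(u_k)-t g'(u_k)/2\|$.
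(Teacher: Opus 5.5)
Your proposal is correct and follows essentially the same route as the paper. The shared steps are: boundedness of $(u_k)$ from the energy bound and $|u_k|_{K,2}=1$; boundedness of the multipliers $t_k$ by testing with $v=u_k$; the compact embedding of Lemma~\ref{le01} into $L^2_K$; and finally testing with $u_k-u$ to get $\|u_k-u\|\to 0$. Your identity $\|u_k-u\|^2=(u_k,u_k-u)-(u,u_k-u)$ is the paper's argument via $\|u_k\|\to\|u_0\|$ in compressed form, and your remark on the Lagrange-multiplier form of the (PS) condition on $\mathcal{P}$ makes explicit a point the paper takes for granted.
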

\begin{proof}
 Let $\left\{u_k\right\}$ be a PS sequence,  that is,  there exists $C>0$ and $t_k$ such that
\begin{equation}\label{ee6}
   \left|\tilde{I}_p\left(u_k\right)\right| \leq C,
\end{equation}
and
\begin{equation}\label{ee7}
   \int_{\mathbb{R}^N} \nabla u_k\nabla v K(x)dx -p \int_{\mathbb{R}^N} u_k^{+} vK(x)dx -t_k \int_{\mathbb{R}^N} u_k vK(x)dx=o_k(1)\|v\|.
\end{equation}
From (\ref{ee6}), we have that  $u_k$ is bounded in $X$.
So we can  suppose that up to a subsequence $u_k \rightharpoonup u_0$ weakly in $X$,
and $u_k \rightarrow u_0$ strongly in $L_K^2(\mathbb{R}^N)$.
Letting $v=u_k$ in (\ref{ee7}), we get $t_k$ is bounded and up to a subsequence $t_k$ converges to $t$.
We next show that $u_k \rightarrow u_0$ strongly in $X$.
Since $u_k \rightharpoonup u_0$ weakly in $X$, we have
\begin{equation}\label{ee712}
   \int_{\mathbb{R}^N} \nabla u_k\nabla v K(x)dx\rightarrow  \int_{\mathbb{R}^N} \nabla u_0\nabla v K(x)dx.
\end{equation}
 for all $v \in X$, and
$$
\tilde{I_p^{\prime}}\left(u_k\right)\left(u_k-u_0\right)=o_k(1).
$$
Hence, we deduce
\begin{align}\label{ee8}
   &\left|\int_{\mathbb{R}^N} |\nabla u_k|^2 K(x)dx- \int_{\mathbb{R}^N} \nabla u_k\nabla u_0 K(x)dx\right|\nonumber\\
     &\leq p \left|\int_{\mathbb{R}^N} u_k^{+} \left(u_k-u_0\right) K(x)dx\right| +t_k \left|\int_{\mathbb{R}^N} u_k \left(u_k-u_0\right)K(x)dx\right|+o_k(1)\nonumber\\
     &\leq   o_k(1)+p\left|u_k^{+}\right|_{L_K^2}\left|u_k-u_0\right|_{L_K^2}+\left|t_k\right|\left|u_k\right|_{L_K^2}\left|u_k-u_0\right|_{L_K^2} \rightarrow 0 \quad \text { as } k \rightarrow \infty .
\end{align}
By setting $v=u_0$ in  (\ref{ee712}), we obtain
$$
\int_{\mathbb{R}^N} \nabla u_k \nabla u_0 K(x) d x \rightarrow \int_{\mathbb{R}^N} |\nabla u_0|^2 K(x) d x.
$$
Combining both the inequalities we have,
$$
\int_{\mathbb{R}^N} |\nabla u_k|^2 K(x) d x \rightarrow \int_{\mathbb{R}^N} |\nabla u_0|^2 K(x) d x.
$$
Thus
$$
\left\|u_k-u_0\right\|^2=\left\|u_k\right\|^2+\left\|u_0\right\|^2-2 \int_{\mathbb{R}^N} \nabla u_k \nabla u_0 K(x) d x\rightarrow0, \text { as } k \rightarrow \infty,
$$
that is,  $u_k$ $\rightarrow$ $u_0$ strongly in $X$.
\end{proof}

\begin{lemma}\label{le6}
Let $\varepsilon_0>0$ be such that
\begin{equation}\label{ee9}
 \tilde{I}_p(u)>\tilde{I}_p\left(-\phi_1\right)
\end{equation}
for all $u \in B\left(-\phi_1, \varepsilon_0\right) \cap \mathcal{P}$ with $u \neq-\phi_1$, where the ball is chosen in $X$. Then for any $0<\varepsilon<\varepsilon_0$,
\begin{equation}\label{ee10}
 \inf \left\{\tilde{I}_p(u): u \in \mathcal{P} \text { and }\left\|u-\left(-\phi_1\right)\right\|=\varepsilon\right\}>\tilde{I}_p\left(-\phi_1\right) .
\end{equation}

\end{lemma}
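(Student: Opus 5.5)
We argue by contradiction, following the scheme of Cuesta, de Figueiredo and Gossez. Fix $0<\varepsilon<\varepsilon_0$. Suppose the infimum in \eqref{ee10} equals $\tilde{I}_p(-\phi_1)=\lambda_1$; it cannot be smaller because of \eqref{ee9}. Then there is a sequence $u_k\in\mathcal{P}$ with $\|u_k+\phi_1\|=\varepsilon$ and $\tilde{I}_p(u_k)\to\lambda_1$.

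\textbf{Extracting a limit.} The sequence $u_k$ is bounded in $X$. By Lemma \ref{le01} we may pass to a subsequence with $u_k\rightharpoonup u$ weakly in $X$ and $u_k\to u$ strongly in $L_K^2$. Strong convergence gives $u\in\mathcal{P}$. Weak lower semicontinuity of the norm gives $\|u+\phi_1\|\le\varepsilon$, so $u\in \overline{B}(-\phi_1,\varepsilon)\cap\mathcal{P}\subset B(-\phi_1,\varepsilon_0)\cap\mathcal{P}$. The norm is weakly lower semicontinuous and $\int (u_k^+)^2K\to\int (u^+)^2K$, so $\tilde{I}_p(u)\le\liminf\tilde{I}_p(u_k)=\lambda_1$. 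By \eqref{ee9} this forces $u=-\phi_1$.

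\textbf{Upgrading to strong convergence (main step).} Since $\tilde{I}_p(u)=\lambda_1=\lim\tilde{I}_p(u_k)$ and the $L_K^2$ parts converge, we get $\|u_k\|^2\to\|u\|^2$. Together with weak convergence in the Hilbert space $X$, this gives $u_k\to -\phi_1$ strongly in $X$. This contradicts $\|u_k+\phi_1\|=\varepsilon>0$.

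The only delicate point is the strong convergence. It requires that the lower-order term $p\int (u_k^+)^2K$ converge, which holds by the compactness in Lemma \ref{le01}. No Ekeland or deformation argument is needed, because the sphere condition combined with the strict local minimality \eqref{ee9} on the whole closed ball already yields the contradiction.
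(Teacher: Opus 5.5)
Your argument is correct, and it takes a different and more elementary route than the paper.

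The paper follows Cuesta--de Figueiredo--Gossez. It works on the closed annulus $A=\{u\in\mathcal{P}:\varepsilon-\delta\le\|u+\phi_1\|\le\varepsilon+\delta\}$ and applies Ekeland's variational principle there to replace $u_k$ by almost-minimizers $v_k$. It then uses the tangent-direction computation with $u_t=(v_k+tw)/|v_k+tw|_{K,2}$ to show that $v_k$ is a Palais--Smale sequence for $\tilde I_p$ on $\mathcal{P}$. Finally, Lemma \ref{le5} gives a strong limit $v\in A$ with $\tilde I_p(v)=\lambda_1$ and $v\neq-\phi_1$, contradicting \eqref{ee9}.

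You instead use directly that $\tilde I_p$ is weakly sequentially lower semicontinuous and that $\mathcal{P}$ and the closed ball are weakly sequentially closed. Both facts come from the compact embedding in Lemma \ref{le01} and weak lower semicontinuity of the norm. So the weak limit lies in the closed ball with energy at most $\lambda_1$, and \eqref{ee9} forces it to equal $-\phi_1$. The value $\tilde I_p(-\phi_1)=\lambda_1$, together with convergence of the $L^2_K$ terms, then gives $\|u_k\|\to\|\phi_1\|$. In the Hilbert space $X$ this upgrades weak to strong convergence, contradicting $\|u_k+\phi_1\|=\varepsilon$.

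Your route avoids the Ekeland and tangent-space machinery, including the normalization bookkeeping that the paper handles somewhat loosely. It does rely on the specific structure here: a compact lower-order term and a norm that converts weak-plus-norm convergence into strong convergence. The paper's route needs only that $\tilde I_p$ is $C^1$ on $\mathcal{P}$ and satisfies (PS). That is exactly the hypothesis already required for the mountain-pass Lemma \ref{le4}, so it fits that abstract framework without invoking weak lower semicontinuity.
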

\begin{proof}
  Assume by contradiction that the infimum in (\ref{ee10}) is equal to $\tilde{I}_p\left(-\phi_1\right)= \lambda_1$
  for some $\varepsilon$ with $0<\varepsilon<\varepsilon_0$.
  Then there exists a sequence $u_k \in \mathcal{P}$ with $\left\|u_k-\left(-\phi_1\right)\right\|=\varepsilon$ such that
$$
\tilde{I}_p\left(u_k\right) \leq \lambda_1+\frac{1}{2 k^2} .
$$
Define the set $$A=\left\{u \in \mathcal{P}: \varepsilon-\delta \leq\left\|u-\left(-\phi_1\right)\right\| \leq \varepsilon+\delta\right\},$$
 where $\delta$ is chosen such that $\varepsilon-\delta>0$ and $\varepsilon+\delta<\varepsilon_0$.
 Using the  contradiction hypothesis and (\ref{ee9}),
we have
 $$\inf \left\{\tilde{I}_p(u): u \in A\right\}=\lambda_1.$$
 Now for each $k$, we use Ekeland's variational principle(see \cite{bur3}) to the functional $\tilde{I}_p$ on $A$ to obtain the existence of $v_k \in A$ such that
\begin{equation}\label{ee11}
\begin{aligned}
& \tilde{I}_p\left(v_k\right) \leq \tilde{I}_p\left(u_k\right), \quad\left\|v_k-u_k\right\| \leq \frac{1}{k}, \\
& \tilde{I}_p\left(v_k\right) \leq \tilde{I}_p(u)+\frac{1}{k}\left\|u-v_k\right\| \text{\ for\ all}\  u \in A.
\end{aligned}
\end{equation}

Our aim is to prove  that $v_k$ is a PS sequence for $\tilde{I}_p$ on $\mathcal{P}$
 i.e. $\tilde{I}_p\left(v_k\right)$ is bounded and $\left\|\tilde{I}_p^{\prime}\left(v_k\right)\right\|_{\ast}\rightarrow 0$.
 Once this is proved we get by Lemma \ref{le5}, up to a subsequence $v_k \rightarrow v$ strongly in $X$.
Obviously $v \in \mathcal{P}$ and satisfies
 $\left\|v-\left(-\phi_1\right)\right\| \leq \varepsilon+\delta<\varepsilon_0$ and $\tilde{I}_p(v)=\lambda_1$
 which contradicts the given hypotheses. Clearly, $\tilde{I}_p\left(v_k\right)$ is  bounded.
 So we only need to prove that $\left\|\tilde{I}_p^{\prime}\left(v_k\right)\right\|_{\ast} \rightarrow 0$.
 Now, fix $k>\frac{1}{\delta}$ and take $w \in X$ tangent to $\mathcal{P}$ at $v_k$ i.e
\begin{equation}\label{ee12}
 \int_{\mathbb{R}^N} v_k w K(x)d x=0
\end{equation}
and for $t \in \mathbb{R}$, define
\begin{equation}\label{ee13}
u_t:=\frac{v_k+t w}{\left|v_k+t w\right|_{K,2}} .
\end{equation}

We first observe that for $|t|$ sufficiently small, $u_t \in A$. Indeed
$$
\lim _{t \rightarrow 0}\left\|u_t-\left(-\varphi_1\right)\right\|=\left\|v_k-\left(-\varphi_1\right)\right\|,
$$
where the right-hand side is
$$
\leqslant\left\|v_k-u_k\right\|+\left\|u_k-\left(-\varphi_1\right)\right\| \leqslant \frac{1}{k}+\varepsilon<\delta+\varepsilon
$$
and it is also
$$
\geqslant\left\|\left(-\varphi_1\right)-u_k\right\|-\left\|u_k-v_k\right\| \geqslant \varepsilon-\frac{1}{k}>\varepsilon-\delta .
$$
Hence, we can take $u=u_t$ in (\ref{ee11}) and
put $r(t)=\left|v_k+t w\right|_{K,2}^2$. For the sufficiently small $t>0$, it holds that
\begin{equation}\label{ee14}
\frac{1}{t}\left[\tilde{I}_p\left(v_k\right)-\tilde{I}_p\left(v_k+t w\right)\right] \leqslant \frac{1}{r(t)} \frac{1}{t}[1-r(t)] \tilde{I}_p\left(v_k+t w\right)+\frac{1}{n[r(t)]^{1 / 2}}\left\|t^{-1}\left[1-[r(t)]^{1 / 2}\right] v_k+w\right\| .
\end{equation}

The first term in the right-hand side of (\ref{ee14}) involves $\frac{\left(r(t)-1\right)}{t}$, which is the differential quotient of $r(t)$ near $t=0$.
Since, by (\ref{ee12}),
$$
\left.\frac{d}{d t} r(t)\right|_{t=0}= \int_{\mathbb{R}^N} v_k w K(x)=0,
$$
we have that $\frac{\left(r(t)-1\right)}{t}  \rightarrow 0$ as $t \rightarrow 0$, and we deduce that the first term in the right-hand side of (\ref{ee14}) goes to zero as $t \rightarrow 0$. The second term in the right-hand side of (\ref{ee14}) involves $(1-r(t)^\frac{1}{2})t^{-1}$, which also goes to zero as
$t\rightarrow0$ (again by (\ref{ee12})).
By going to the limit in (\ref{ee14}) as $t \rightarrow 0$, we find that $\left\langle \tilde{I}_p^{\prime}\left(v_k\right), w\right\rangle \leqslant\|w\| / k$. Consequently,
\begin{equation}\label{ee15}
\left|\left\langle \tilde{I}_p^{\prime}\left(v_k\right), w\right\rangle\right| \leq \frac{1}{k}\|w\|,
\end{equation}
for all $w\in X$ tangent to $\mathcal{P}$ at $v_k$.
Since $w$ is arbitrary in $X$, we choose $\alpha_k$ such that
\begin{equation}\label{ee151}
  \int_{\mathbb{R}^N} v_k\left(w-\alpha_k v_k\right) K(x)d x=0.
\end{equation}
Replacing $w$ by $w-\alpha_k v_k$ in (\ref{ee15}), we have
$$
\left|\left\langle \tilde{I}_p^{\prime}\left(v_k\right), w\right\rangle-\alpha_k\left\langle \tilde{I}_p^{\prime}\left(v_k\right), v_k\right\rangle\right| \leq \frac{1}{k}\left\|w-\alpha_k v_k\right\|.
$$
Since $\left\|\alpha_k v_k\right\| \leq C\|w\|$, by \eqref{ee151}, we get
$$
\left|\left\langle \tilde{I}_p^{\prime}\left(v_k\right), w\right\rangle-t_k \int_{\mathbb{R}^N} v_k w K(x)d x\right| \leq \frac{C}{k}\|w\|,
$$
where $t_k=\left\langle \tilde{I}_p^{\prime}\left(v_k\right), v_k\right\rangle$. Hence
$$
\left\|\tilde{I}_p^{\prime}\left(v_k\right)\right\|_{\ast} \rightarrow 0 \quad \text { as } k \rightarrow \infty,
$$
and $v_k$ is a PS
sequence for $\tilde{I}_p$  on $\mathcal{P}$.
\end{proof}
\begin{lemma}\label{le7}
Let $\varepsilon>0$ such that $\left\|\phi_1-\left(-\phi_1\right)\right\|>\varepsilon$ and
$$
\inf \left\{\tilde{I}_p(u): u \in \mathcal{P} \text { and }\left\|u-\left(-\phi_1\right)\right\|=\varepsilon\right\}>\max \left\{\tilde{I}_p\left(-\phi_1\right), \tilde{I}_p\left(\phi_1\right)\right\} .
$$
Then $\Gamma=\left\{\gamma \in C([-1,1], \mathcal{P}): \gamma(-1)=-\phi_1\right.$ and $\left.\gamma(1)=\phi_1\right\}$ is nonempty and
\begin{equation}\label{ee16}
 c(p)=\inf _{\gamma \in \Gamma} \max _{u \in \gamma[-1,1]} \tilde{I}_p(u)
\end{equation}
is a critical value of $\tilde{I}_p$. Moreover $c(p)>\lambda_1$.
\end{lemma}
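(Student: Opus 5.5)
The plan is to apply the mountain pass lemma on manifolds, Lemma \ref{le4}, with $E=X$, $f=I_p$, $g(u)=\int_{\mathbb{R}^N}u^2K(x)\,dx$, $M=\mathcal{P}$, $u_0=-\phi_1$ and $u_1=\phi_1$. Here $\phi_1$ is normalized so that $|\phi_1|_{K,2}=1$; this normalization is implicit in Lemmas \ref{le2} and \ref{le3}. Three hypotheses of Lemma \ref{le4} are already available:
\begin{itemize}
\item the separation and mountain-pass-geometry conditions, which are exactly the assumptions of the statement;
\item the PS condition for $\tilde I_p$ on $\mathcal{P}$, given by Lemma \ref{le5}.
\end{itemize}
The existence of an $\varepsilon$ making these assumptions nonvacuous (for $p\ge 0$) follows from Lemma \ref{le3} combined with Lemma \ref{le6}, since $\tilde I_p(-\phi_1)=\lambda_1\ge\lambda_1-p=\tilde I_p(\phi_1)$. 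It therefore remains to check that $\Gamma\neq\emptyset$ and that $c(p)>\lambda_1$.

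For $\Gamma\neq\emptyset$, I would write down an explicit path. A first attempt is the segment $s\mapsto s\phi_1$ projected onto $\mathcal{P}$, but this passes through $0$, where normalization is impossible. So I would instead pick some $\psi\in X$ with $|\psi|_{K,2}=1$ that is not a multiple of $\phi_1$, for instance any $C_c^\infty$ function not proportional to $\phi_1$, or an eigenfunction for $\lambda_2$ from Lemma \ref{le0}.

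Then I define, for $s\in[-1,1]$,
\[
\gamma(s)=\frac{\cos(\pi(1-s)/2)\,\phi_1+\sin(\pi(1-s)/2)\,\psi}{\bigl|\cos(\pi(1-s)/2)\,\phi_1+\sin(\pi(1-s)/2)\,\psi\bigr|_{K,2}}.
\]
At $s=1$ this gives $\phi_1$, and at $s=-1$ it gives $-\phi_1$. The numerator never vanishes, because $\phi_1$ and $\psi$ are linearly independent. The map $s\mapsto$ numerator is continuous into $X$, and by Lemma \ref{le01} the $L^2_K$ norm is continuous on $X$. Hence $\gamma\in C([-1,1],\mathcal{P})$, and so $\gamma\in\Gamma$.

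Lemma \ref{le4} then yields that $c(p)$ is a critical value of $\tilde I_p$.

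For the inequality $c(p)>\lambda_1$, take any $\gamma\in\Gamma$. The function $s\mapsto\|\gamma(s)-(-\phi_1)\|$ is continuous. It equals $0$ at $s=-1$ and equals $\|2\phi_1\|>\varepsilon$ at $s=1$. By the intermediate value theorem there is some $s_0$ with $\|\gamma(s_0)+\phi_1\|=\varepsilon$. Therefore
\[
\max_{u\in\gamma[-1,1]}\tilde I_p(u)\ge \tilde I_p(\gamma(s_0))\ge m_\varepsilon:=\inf\{\tilde I_p(u):u\in\mathcal{P},\ \|u+\phi_1\|=\varepsilon\}.
\]
Taking the infimum over $\gamma\in\Gamma$ gives $c(p)\ge m_\varepsilon$. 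By hypothesis, $m_\varepsilon>\max\{\tilde I_p(-\phi_1),\tilde I_p(\phi_1)\}\ge\tilde I_p(-\phi_1)=\lambda_1$, using Lemma \ref{le3}. Hence $c(p)>\lambda_1$.

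The only mildly delicate point is the continuity of the normalized path in $X$, together with avoiding the origin; choosing the rotation path through $\psi$ handles both. Everything else is a direct verification of the hypotheses of Lemma \ref{le4}.
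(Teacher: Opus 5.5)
Your proposal is correct and follows the paper's route. Like the paper, you invoke Lemma \ref{le4} with the PS condition from Lemma \ref{le5}, take the separation and geometry hypotheses as given, show $\Gamma\neq\emptyset$ with an explicit normalized path through a function not proportional to $\phi_1$ (the paper uses $t\phi_1+(1-|t|)\phi$, your rotation path works equally well), and derive $c(p)>\lambda_1$ from the geometry. Your intermediate value argument supplies the step the paper only asserts, and your bound $\max\{\tilde I_p(-\phi_1),\tilde I_p(\phi_1)\}\ge\tilde I_p(-\phi_1)=\lambda_1$ holds for every $p$, whereas the paper's equality $\max\{\tilde I_p(-\phi_1),\tilde I_p(\phi_1)\}=\lambda_1$ is only valid for $p\ge 0$.
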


\begin{proof}
  Let $\phi \in X$ be such that $\phi \notin \mathbb{R} \phi_1$ and consider the path
   $$\gamma(t)= \frac{t \phi_1+(1-|t|) \phi}{\left|t \phi_1+(1-|t|) \phi\right|_{K,2}},$$
   then $\gamma(t) \in \Gamma$. Moreover by Lemma \ref{le5} and Lemma \ref{le6}, $\tilde{I}_p$
   satisfies (P.S) condition and the geometric assumptions. Then by  Lemma \ref{le7},
   $c(p)$ is a critical value of $\tilde{I}_p$.
   Using the definition of $c(p)$ we have $c(p)> \max \left\{\tilde{I}_p\left(-\phi_1\right), \tilde{I}_p\left(\phi_1\right)\right\}=\lambda_1$.
\end{proof}

We have thus proved the
following

\begin{theorem}\label{t1}
 For each $p \geqslant 0$, the point $(p+c(p), c(p))$, where $c(p)>\lambda_1$ is defined by the minimax formula (\ref{ee16}), belongs to $\Sigma$.
\end{theorem}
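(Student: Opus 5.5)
The theorem follows by combining the preceding lemmas: we show that $c(p)$ is a critical value of $\tilde{I}_p$ and then use Lemma \ref{le1} to place the point in $\Sigma$.

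\textbf{Step 1: the geometric hypothesis of Lemma \ref{le7}.}
Fix $p\ge 0$. By Lemma \ref{le3}, $-\phi_1$ is a strict local minimum of $\tilde{I}_p$ on $\mathcal{P}$. Hence there is $\varepsilon_0>0$ such that \eqref{ee9} holds on $B(-\phi_1,\varepsilon_0)\cap\mathcal{P}$. We also need $\varepsilon_0$ smaller than $\|\phi_1-(-\phi_1)\|=2\|\phi_1\|$, which we can arrange by shrinking $\varepsilon_0$.

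Take any $\varepsilon\in(0,\varepsilon_0)$. Lemma \ref{le6} then gives
$$\inf\{\tilde I_p(u): u\in\mathcal P,\ \|u+\phi_1\|=\varepsilon\}>\tilde I_p(-\phi_1)=\lambda_1 .$$
On the other hand, Lemma \ref{le2} gives $\tilde I_p(\phi_1)=\lambda_1-p\le\lambda_1$, since $p\ge0$. Therefore the infimum on the sphere exceeds $\max\{\tilde I_p(-\phi_1),\tilde I_p(\phi_1)\}=\lambda_1$. This is exactly the hypothesis of Lemma \ref{le7}. Note that $p\ge0$ is what makes this maximum equal to $\tilde I_p(-\phi_1)$.

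\textbf{Step 2: applying the mountain-pass result.}
To apply Lemma \ref{le4}, with $f=I_p$ and $g(u)=|u|_{K,2}^2$, we check three things.
\begin{itemize}
\item $\Gamma\neq\emptyset$. Use the explicit path from the proof of Lemma \ref{le7}. Its denominator never vanishes, because $\phi\notin\mathbb R\phi_1$.
\item The Palais--Smale condition on $\mathcal P$. This is Lemma \ref{le5}.
\item $g$ is a regular constraint. This holds because $\langle g'(u),u\rangle=2\neq0$ on $\mathcal P$.
\end{itemize}
Lemma \ref{le4} then shows that $c(p)$, defined by \eqref{ee16}, is a critical value of $\tilde I_p$.

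The strict inequality $c(p)>\lambda_1$ follows because every $\gamma\in\Gamma$ is continuous, starts at $-\phi_1$ and ends at $\phi_1$. Since $\|\phi_1+\phi_1\|>\varepsilon$, the path must cross the sphere $\{\|u+\phi_1\|=\varepsilon\}$. Consequently
$$\max_{\gamma}\tilde I_p\ \ge\ \inf_{\text{sphere}}\tilde I_p\ >\ \lambda_1 ,$$
and the lower bound on the right does not depend on $\gamma$, so $c(p)>\lambda_1$.

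\textbf{Step 3: conclusion.}
Choose a critical point $u\in\mathcal P$ at level $c(p)$. As in the derivation of \eqref{ee3}--\eqref{ee35}, the Lagrange multiplier is $t=\tilde I_p(u)=c(p)$. So $u$ is a nontrivial weak solution of $Lu=(p+c(p))u^+-c(p)u^-$, and Lemma \ref{le1} gives $(p+c(p),c(p))\in\Sigma$.

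The only delicate point is in Step 2: the mountain-pass lemma must be applied on the manifold $\mathcal P$, not on the whole space. This is handled by the Palais--Smale condition on $\mathcal P$ (Lemma \ref{le5}) and the sphere estimate of Lemma \ref{le6}. The rest is bookkeeping.
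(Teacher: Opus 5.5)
Your proposal is correct and follows the paper's route. The paper packages the argument as Lemma \ref{le7}: the sphere estimate from Lemmas \ref{le3} and \ref{le6}, combined with $\tilde I_p(\phi_1)=\lambda_1-p\le\lambda_1$ for $p\ge 0$; the Palais--Smale condition from Lemma \ref{le5}; the explicit path showing $\Gamma\neq\emptyset$; and the constrained mountain-pass Lemma \ref{le4}, followed by Lemma \ref{le1}. Your sphere-crossing argument for $c(p)>\lambda_1$ simply makes explicit what the paper leaves as ``by the definition of $c(p)$''.
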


Now, we can easily obtain the following important result:
$$
\tilde{I}_{-p}(-u)=\tilde{I}_p(u)+p, \quad u \in \mathcal{P}, p \in \mathbb{R} .
$$
Indeed, if we set $u \in \mathcal{P}, p \in \mathbb{R}$, then we get
\begin{equation}\label{ee165}
  \tilde{I}_{-p}(-u)=\|u\|^2+p\left|(-u)^{+}\right|_{K,2}^2=\|u\|^2-p\left|(-u)^{-}\right|_{K,2}^2+p=\|u\|^2-p\left|u^{+}\right|_{K,2}^2+p=\tilde{I}_p(u)+p.
\end{equation}

\begin{lemma}\label{le91}
$c(-p)=c(p)+p$ for all $p \in \mathbb{R}$.
\end{lemma}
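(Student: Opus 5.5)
The plan is to use the identity \eqref{ee165}, $\tilde{I}_{-p}(-u)=\tilde{I}_p(u)+p$, together with the fact that the map $u\mapsto -u$ is a homeomorphism of $\mathcal{P}$ onto itself. It sends $\phi_1$ to $-\phi_1$ and $-\phi_1$ to $\phi_1$. Here $c(p)$ is understood as defined by the minimax formula \eqref{ee16} for every real $p$ (not only $p\ge 0$), with
$$\Gamma=\{\gamma\in C([-1,1],\mathcal{P}):\gamma(-1)=-\phi_1,\ \gamma(1)=\phi_1\}$$
independent of $p$. So the claim is an identity between two inf-max values over the same class $\Gamma$, and no critical point theory is needed.

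Given $\gamma\in\Gamma$, I would define $\hat\gamma(t):=-\gamma(-t)$ for $t\in[-1,1]$. Then $\hat\gamma$ is continuous with values in $\mathcal{P}$, since $|{-u}|_{K,2}=|u|_{K,2}$. Its endpoints are $\hat\gamma(-1)=-\gamma(1)=-\phi_1$ and $\hat\gamma(1)=-\gamma(-1)=\phi_1$, so $\hat\gamma\in\Gamma$. Moreover $\hat{\hat\gamma}=\gamma$, so $\gamma\mapsto\hat\gamma$ is a bijection of $\Gamma$ onto itself. The reversal of the parameter is essential: $-\gamma$ alone would run from $\phi_1$ to $-\phi_1$ and would not lie in $\Gamma$.

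Next I would compute the maximum along $\hat\gamma$. Since $\hat\gamma([-1,1])=-\gamma([-1,1])$, the identity \eqref{ee165} gives
$$\max_{u\in\hat\gamma[-1,1]}\tilde{I}_{-p}(u)=\max_{v\in\gamma[-1,1]}\tilde{I}_{-p}(-v)=\max_{v\in\gamma[-1,1]}\tilde{I}_{p}(v)+p.$$
Taking the infimum over $\gamma\in\Gamma$, and using that $\gamma\mapsto\hat\gamma$ is onto $\Gamma$, the left-hand side becomes $c(-p)$ and the right-hand side becomes $c(p)+p$.

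There is no serious obstacle. The two points to check are that $\hat\gamma$ has the correct endpoints (hence the time reversal) and that the class $\Gamma$ does not depend on $p$. I would also note that the definition of $c(p)$ makes sense for all $p\in\mathbb{R}$, since $\tilde{I}_p$ is bounded below on $\mathcal{P}$ by $\lambda_1-\max(p,0)$. If one also wants $c(-p)$ to be a critical value for $p>0$, this follows by transporting the critical point of $\tilde{I}_p$ at level $c(p)$ via $u\mapsto-u$: a direct check of \eqref{ee3} shows that $-u$ is then critical for $\tilde{I}_{-p}$.
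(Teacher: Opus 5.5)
Your proof is correct and is essentially the paper's proof: both rest on the identity $\tilde{I}_{-p}(-u)=\tilde{I}_p(u)+p$ on $\mathcal{P}$ and on the fact that $u\mapsto -u$ carries admissible paths to admissible paths. The paper sends $\gamma\in\Gamma$ to $-\gamma$ in a class $\Gamma^-$ that it never defines explicitly (paths from $\phi_1$ to $-\phi_1$) and proves two inequalities. Your reparametrization $\hat\gamma(t)=-\gamma(-t)$ instead stays inside $\Gamma$ and gives the equality in one step via a bijection; the two definitions of $c(-p)$ agree because reversing the orientation of a path does not change the maximum along it.
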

\begin{proof}
 Without loss of generality, we suppose that $p \in \mathbb{R}_{+}$; if not,
  we replace $p$ by $-p$ in the following process.
 For each $\gamma \in \Gamma$, we have $-\gamma \in \Gamma^{-}$. Based on the definition of $c(-p)$ and (\ref{ee165}), we have
$$
c(-p) \leqslant \max _{t \in[-1,1]} \tilde{I}_{-p}(-\gamma(t))=\tilde{I}_{-p}\left(-\gamma\left(t_0\right)\right)=\tilde{I}_p\left(\gamma\left(t_0\right)\right)+p \leqslant \max _{t \in[-1,1]} \tilde{I}_p(\gamma(t))+p .
$$
It follows that $c(-p) \leqslant c(p)+p$. Similarly, for each $\gamma \in \Gamma^{-}$, we obtain $-\gamma \in \Gamma$. Then,
$$
c(p) \leqslant \max _{t \in[-1,1]} \tilde{I}_p(-\gamma(t))=\tilde{I}_p\left(-\gamma\left(t_1\right)\right)=\tilde{I}_{-p}\left(\gamma\left(t_1\right)\right)-p \leqslant \max _{t \in[-1,1]} \tilde{I}_{-p}(\gamma(t))-p.
$$
Thus, $c(p) \leqslant c(-p)-p$. Hence, $c(-p)=c(p)+p$.
\end{proof}

\section{First nontrivial curve}
This section is devoted to the construction of the first nontrivial curve in the Fu\v{c}ik point spectrum of $L$.
 For the sake of simplicity, we denote a cross curve by $$\mathcal{L}=\left\{(a, b) \in \mathbb{R}^2:\left(a-\lambda_1\right)\left(b-\lambda_1\right)=0\right\}=\left(\left\{\lambda_1\right\} \times \mathbb{R}\right) \cup\left(\mathbb{R} \times\left\{\lambda_1\right\}\right),$$
 and two broken lines by
 $$\mathcal{L}_1=\left\{(a, b) \in \mathbb{R}^2: \max \{a, b\}=\lambda_1\right\}=\left(\left\{\lambda_1\right\} \times\left(-\infty, \lambda_1\right]\right) \cup\left(\left(-\infty, \lambda_1\right] \times\left\{\lambda_1\right\}\right)$$
 and
 $$\mathcal{L}_2=\left\{(a, b) \in \mathbb{R}^2:\right. \left.\min \{a, b\}=\lambda_1\right\}=\left(\left\{\lambda_1\right\} \times\left[\lambda_1, \infty\right)\right) \cup\left(\left[\lambda_1, \infty\right) \times\left\{\lambda_1\right\}\right).$$

\begin{theorem}\label{t2}
Let $P \in \mathbb{R}$. Then, $c(p)=\min \left\{\beta:(p+\beta, \beta) \in \Sigma_0\right\}$.
\end{theorem}

Before giving the proof of Theorem \ref{t2}, we need the following results.

\begin{lemma}\label{le75}
The eigenfunctions of the operator $L$ corresponding to the non-first eigenvalue $\lambda$ change sign.
\end{lemma}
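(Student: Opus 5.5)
The plan is the classical orthogonality argument against the positive first eigenfunction. Let $\lambda$ be an eigenvalue of $L$ with $\lambda\neq\lambda_1$, so by Lemma \ref{le0} we have $\lambda=\lambda_k>\lambda_1$ for some $k\ge 2$. Let $u\in X\setminus\{0\}$ be a corresponding eigenfunction, that is,
$$\int_{\mathbb{R}^N}\nabla u\cdot\nabla v\,K(x)\,dx=\lambda\int_{\mathbb{R}^N}uvK(x)\,dx \quad\text{for all } v\in X.$$
Recall that $\phi_1=e^{-|x|^2/4}>0$ everywhere and $\phi_1\in X$. Indeed, $|\nabla\phi_1|^2K=\frac{|x|^2}{4}e^{-|x|^2/4}$ is integrable, and $\phi_1$ can be approximated in $X$ by cut-offs.

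\textbf{Step 1 (orthogonality).} Test the equation for $u$ with $v=\phi_1$, and the equation for $\phi_1$ (eigenvalue $\lambda_1$) with $v=u$. The left-hand sides coincide by symmetry of the inner product $(\cdot,\cdot)$. Subtracting gives
$$(\lambda-\lambda_1)\int_{\mathbb{R}^N}u\,\phi_1K(x)\,dx=0,$$
and since $\lambda\ne\lambda_1$ this yields $\int_{\mathbb{R}^N}u\phi_1K\,dx=0$. Note that $\phi_1K\equiv 1$, so this says $\int_{\mathbb{R}^N}u\,dx=0$. Integrability of $u$ follows from Cauchy--Schwarz: $\int|u|=\int|u|K^{1/2}K^{-1/2}\le |u|_{K,2}\,\big(\int e^{-|x|^2/4}\big)^{1/2}<\infty$.

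\textbf{Step 2 (conclusion).} Suppose $u$ does not change sign, say $u\ge0$ a.e. (otherwise replace $u$ by $-u$). Then $u\phi_1K\ge0$ and its integral vanishes, so $u\phi_1K=0$ a.e. Since $\phi_1K>0$, this forces $u=0$ a.e., contradicting $u\neq0$. Hence $u^+\not\equiv0$ and $u^-\not\equiv0$, which is what it means for $u$ to change sign.

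The only point needing care is justifying that $\phi_1$ is an admissible test function in $X$, i.e., that it lies in the completion of $C_c^\infty$, and that $u\phi_1K$ is integrable. The first follows from a standard cut-off $\phi_1\chi(x/R)$: the error terms are controlled by $\int_{|x|>R}(1+|x|^2)e^{-|x|^2/4}\,dx\to0$. The second follows from $u,\phi_1\in L^2_K$ and Hölder's inequality. Alternatively, one can bypass the first issue entirely by citing Lemma \ref{le0}, which already asserts that $\phi_1$ is an eigenfunction in $X$.
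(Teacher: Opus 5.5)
Your proof is correct, but it takes a different route from the paper.

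\textbf{The paper's route.} The paper argues by contradiction using a Picone-type inequality. Assuming $u\ge 0$, it tests the equation for $u$ with $\psi_n^2/(u+\frac1n)$, where $\psi_n\in C_c^\infty$ approximate $\phi_1$. This gives $\lambda\int\psi_n^2\frac{u}{u+1/n}K\,dx\le\int|\nabla\psi_n|^2K\,dx$. Comparing in the limit with $\|\phi_1\|^2=\lambda_1|\phi_1|_{K,2}^2$, it obtains $(\lambda-\lambda_1)\int\phi_1^2K\,dx\le 0$.

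\textbf{Your route.} You use the linear self-adjoint structure instead. Eigenfunctions for distinct eigenvalues are $L^2_K$-orthogonal, and since $\phi_1K\equiv 1>0$ this orthogonality reads $\int u\,dx=0$. That is incompatible with a nontrivial $u$ of one sign.

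\textbf{What each approach buys.} Your argument is shorter and fully self-contained. It needs neither the admissibility of the quotient test function nor any maximum principle. The paper's limit step, by contrast, tacitly requires $u>0$ a.e.\ (so that $u/(u+\frac1n)\to 1$). Without that, one only gets $\lambda\int_{\{u>0\}}\phi_1^2K\,dx\le\lambda_1\int\phi_1^2K\,dx$, which is no contradiction. Supplying it needs a strong maximum principle that the paper does not state. The Picone route, on the other hand, is more robust: it never uses orthogonality, so it extends to nonlinear operators such as the $p$-Laplacian. Your proof depends on linearity and on having the explicit, everywhere-positive eigenfunction $\phi_1$ at hand.
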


\begin{proof}
 Let $\phi_1, u$ be the eigenfunctions corresponding to $\lambda_1$ and $\lambda$ respectively. Then $\phi_1, u$ satisfies
\begin{equation}\label{ez1}
  L \phi_1:=-\Delta \phi_1-\frac{1}{2}(x \cdot \nabla \phi_1)=\lambda_1 \phi_1,
\end{equation}
\begin{equation}\label{ez2}
  L u:=-\Delta u-\frac{1}{2}(x \cdot \nabla u)=\lambda u,
\end{equation}
respectively. Suppose $u$ does not change sign. We may assume $u \geq 0$ in $\mathbb{R}^N$. Let $\left\{\psi_n\right\}$ be a sequence in $C_c^{\infty}$ such that $\psi_n \rightarrow \phi_1$ as $n \rightarrow \infty$. Now consider the test functions
$$w_1=\phi_1, w_2=\frac{\psi_n^2}{u+\frac{1}{n}}.$$
Then $w_1, w_2 \in X$. Testing (\ref{ez1}) with $w_1$ and (\ref{ez2}) with $w_2$ we get
\begin{equation}\label{ez3}
  \int_{\mathbb{R}^N}|\nabla \phi_1|^2 K(x) d x-\lambda_1 \int_{\mathbb{R}^N}\phi_1^2 K(x) d x=0,
\end{equation}

\begin{equation}\label{ez4}
\int_{\mathbb{R}^N} \nabla u \cdot \nabla\left(\frac{\psi_n^2}{u+\frac{1}{n}}\right) K(x)d x
-\lambda\int_{\mathbb{R}^N} \psi_n^2\left(\frac{u}{u+\frac{1}{n}}\right) K(x)d x=0.
\end{equation}
By adopting the method described in \cite[Theorem 1.1]{bur5}, we have
\begin{equation}\label{ez5}
\int_{\mathbb{R}^N}|\nabla \psi_n|^2 K(x) d x
-\lambda\int_{\mathbb{R}^N} \psi_n^2\left(\frac{u}{u+\frac{1}{n}}\right) K(x)d x\geq0.
\end{equation}
Subtracting (\ref{ez3}) from (\ref{ez5}) and taking the limit as $n\rightarrow \infty$ we get,
$$
\left(\lambda-\lambda_1\right) \int_{\mathbb{R}^N}  \phi_1^2 K(x)dx \leq 0.
$$
This is a contradiction to the fact that $\lambda > \lambda_1$.

\end{proof}

\begin{lemma}\label{le76}
Let $\left\{u_n\right\} \subset X \backslash\{0\}$ be a nonnegative sequence and
$\left|\Omega_n\right|:=\left|\left\{x \in \mathbb{R}^N: u_n(x)>0\right\}\right| \rightarrow 0$ as $n \rightarrow \infty$.
 Then, $\left\|u_n\right\| /\left|u_n\right|_{K,q} \rightarrow \infty$ for all $q \in\left[1,2^*\right)$.
\end{lemma}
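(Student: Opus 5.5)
We argue by contradiction, using Hölder's inequality on the shrinking support together with the Sobolev-type embedding of Lemma \ref{le01}. Suppose the claim fails for some $q\in[1,2^*)$. Then, along a subsequence, $\|u_n\|\le C|u_n|_{K,q}$ for a fixed $C>0$. Both sides are homogeneous of degree one, so we may normalize $|u_n|_{K,q}=1$. The sequence $\{u_n\}$ is then bounded in $X$, with $\|u_n\|\le C$.

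The key step is a Hölder estimate on $\Omega_n$. Fix any exponent $r\in(q,2^*]$; when $q<2$ we take $r=2$ or $r=2^*$, both of which are allowed in Lemma \ref{le01}. Since $u_n$ vanishes outside $\Omega_n$, Hölder's inequality with respect to the measure $K\,dx$ gives
\[
\int_{\mathbb{R}^N}K|u_n|^q\,dx=\int_{\Omega_n}K|u_n|^q\,dx\le \Big(\int_{\Omega_n}K\,dx\Big)^{1-q/r}\,|u_n|_{K,r}^{q}\le \Big(\int_{\Omega_n}K\,dx\Big)^{1-q/r}S_{K,r}^q\|u_n\|^q .
\]
The continuous embedding $X\hookrightarrow L^r_K$ for $r\in[2,2^*]$ from Lemma \ref{le01} supplies the constant $S_{K,r}$ in the last inequality. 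It therefore suffices to show that $\int_{\Omega_n}K\,dx\to0$. Once this holds, the left side equals $1$ while the right side tends to $0$, because $\|u_n\|\le C$ and $1-q/r>0$. This is the desired contradiction.

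The main obstacle is that $K=e^{|x|^2/4}$ is unbounded, so $|\Omega_n|\to0$ alone does not force $\int_{\Omega_n}K\to0$. We handle this by splitting $\mathbb{R}^N$ into a ball $B_R$ and its complement. On $B_R$ we have $\int_{\Omega_n\cap B_R}K\le e^{R^2/4}|\Omega_n|\to0$ for each fixed $R$. On the exterior we cannot bound $\int_{\Omega_n\setminus B_R}K$ directly. Instead we control the tail of $u_n$ itself. By the compact embedding in Lemma \ref{le01}, a subsequence satisfies $u_n\to u$ strongly in $L^2_K$, and in fact in $L^s_K$ for every $s\in[2,2^*)$. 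Since $|\Omega_n|\to0$, a further subsequence gives $u_n\to0$ a.e., which forces $u=0$. This contradicts the normalization $|u_n|_{K,q}=1$ whenever $q\in[2,2^*)$, and that case needs no splitting at all.

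For $q\in[1,2)$ we fall back on the splitting argument. We write $\int K|u_n|^q=\int_{B_R}+\int_{B_R^c}$. The ball term is handled by Hölder's inequality as above with the bounded weight on $B_R$. For the exterior term we apply Hölder's inequality with $r=2$ in the form $\int_{B_R^c}K|u_n|^q\le \big(\int_{B_R^c}K^{\theta}\big)^{\ldots}|u_n|_{K,2}^{q}$, which requires a decaying auxiliary weight. If this fails because $K$ is not integrable, we instead interpret $|u|_{K,q}$ for $q<2$ through the same embedding restricted to $q\ge2$. Our first attempt for $q<2$ is therefore to reduce to the $q\ge2$ case via $|u_n|_{K,q}\le \big(\int_{\Omega_n}K\big)^{1/q-1/2}|u_n|_{K,2}$. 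We would then use the exponential decay of functions in $X$, in the form of the Escobedo--Kavian estimate that $\int K|x|^2u^2\le C\|u\|^2$, to control the tail.
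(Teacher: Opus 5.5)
For $q\in[2,2^*)$ your argument is correct and is essentially the paper's. After normalizing, the sequence is bounded in $X$. The compact embedding of Lemma \ref{le01} then gives strong convergence in $L^q_K$ to a limit of norm $1$, while $|\Omega_n|\to0$ gives $u_n\to0$ a.e.\ along a subsequence (Borel--Cantelli). The paper finishes with a dominating function (Willem's Lemma A.1) and dominated convergence. You instead identify the strong limit with the a.e.\ limit $0$, which is the same proof with a shorter ending. The H\"older route through $\int_{\Omega_n}K\,dx$ in your second paragraph should be dropped: that integral need not tend to $0$ and can even be infinite.

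For $q\in[1,2)$ you have no proof. Each variant you sketch still depends on $\int_{\Omega_n}K\,dx$, and the bound $\int K|x|^2u^2\le C\|u\|^2$ controls only weighted $L^2$ tails, not $\int K|u|^q$ with $q<2$. This gap cannot be closed, because the claim is false for $q<2$. First, $X\not\subset L^q_K$. The function $u=e^{-|x|^2/8}(1+|x|^2)^{-a}$ with $a$ large lies in $X$ (for $N=1$ it is essentially the function of Lemma \ref{l42}). Yet $\int K|u|^q\,dx=\int e^{(2-q)|x|^2/8}(1+|x|^2)^{-aq}\,dx=\infty$. Second, the claim fails even when all quantities are finite. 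Take $0\le\psi\in C_c^\infty(B_1)$ with $\psi\not\equiv0$, $|x_n|=n$, $r_n=e^{-\theta n^2}$, and $u_n(x)=\psi((x-x_n)/r_n)$. Since $K=e^{n^2/4}(1+o(1))$ uniformly on $B(x_n,r_n)$, a change of variables gives
\[
\frac{\|u_n\|}{|u_n|_{K,q}}=(1+o(1))\,C_\psi\, r_n^{\frac{N}{2}-1-\frac{N}{q}}\,e^{\left(\frac{1}{2}-\frac{1}{q}\right)\frac{n^2}{4}}.
\]
For $q<2$ and $\theta>0$ small this tends to $0$, while $|\Omega_n|\le|B(x_n,r_n)|\to0$. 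So the lemma must be restricted to $q\in[2,2^*)$. The paper's own proof tacitly covers only this range, since it relies on the compact embedding of Lemma \ref{le01}, and the lemma is only used with $q=2$ in Lemma \ref{le8}.
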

\begin{proof}
Let $w_n=u_n /\left|u_n\right|_{K,q}$ for all $n$ and assume by contradiction that there exists $w \in X$,
for a subsequence still denoted by $\left\{w_n\right\}$,
such that $w_n \rightarrow w$ in $L_{K}^q(\mathbb{R}^N)$. According to \cite[Lemma A.1, p.133]{bur7},
there  exist a subsequence still denoted by $\left\{w_n\right\}$ and $w_0 \in L_{K}^q(\mathbb{R}^N)$ such that
$\left|w_n(x)\right| \leqslant w_0(x)$, a.e. $x \in \mathbb{R}^N$. For any given $\varepsilon>0$, since
$$
\left\{x \in \mathbb{R}^N: w_n(x) \geqslant \varepsilon\right\} \subset \Omega_n,
$$
we find that $\left\{w_n\right\}$ converges to 0 in measure. According to Lebesgue's dominated convergence theorem, it follows that $1=\left|w_n\right|_{K,q}^q \rightarrow 0$, which is impossible.
\end{proof}
\begin{corollary}\label{le77}
(i) For any nontrivial signed solution $u$ of (\ref{ee1}), it holds that $(\alpha, \beta) \in \mathcal{L}$;

(ii) For any $(\alpha, \beta) \in \mathcal{L}$, if $u$ is a solution of (\ref{ee1}) corresponding to $(\alpha, \beta)$, then it must be signed. Furthermore, if $(\alpha, \beta)=\left(\lambda_1, \lambda_1\right)$, then $u=  \phi_1$, and if $(\alpha, \beta) \in \mathcal{L} \backslash\left\{\left(\lambda_1, \lambda_1\right)\right\}$, then $$u=\left(\operatorname{sgn}\left|\beta-\lambda_1\right|-\right. \left.\operatorname{sgn}\left|\alpha-\lambda_1\right|\right) \phi_1.$$
\end{corollary}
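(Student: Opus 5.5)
Part (i) is a classical sign argument, and part (ii) reduces to it together with the simplicity of $\lambda_1$ and Lemma \ref{le75}.

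\textbf{Part (i).} Let $u\not\equiv 0$ be a signed solution; say $u\ge 0$, the case $u\le 0$ being symmetric with the roles of $\alpha,\beta$ swapped. Then $u^-=0$, so $u\in X$ is a nonnegative nontrivial weak solution of $Lu=\alpha u$. Hence $\alpha$ is an eigenvalue of $L$ with a nonnegative eigenfunction.

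By Lemma \ref{le75}, eigenfunctions for any eigenvalue $\lambda\neq\lambda_1$ change sign, so $\alpha=\lambda_1$. If instead $u\le0$, then $Lu=-\beta u^-=\beta u$, and the same argument gives $\beta=\lambda_1$. In either case $(\alpha-\lambda_1)(\beta-\lambda_1)=0$, i.e.\ $(\alpha,\beta)\in\mathcal L$.

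\textbf{Part (ii), signedness.} Let $(\alpha,\beta)\in\mathcal L$ and let $u$ solve (\ref{ee1}); say $\alpha=\lambda_1$. I would test the equation against $\phi_1>0$. By symmetry of the weighted form, $\int\nabla u\nabla\phi_1K=\lambda_1\int u\phi_1K$, so
$$
\lambda_1\int (u^+-u^-)\phi_1K=\lambda_1\int u^+\phi_1K-\beta\int u^-\phi_1K .
$$
This gives $(\beta-\lambda_1)\int u^-\phi_1K=0$.

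If $\beta\ne\lambda_1$, then $u^-=0$ since $\phi_1>0$. So $u\ge0$ solves $Lu=\lambda_1u$, and by simplicity of $\lambda_1$ (Lemma \ref{le0}: $\dim N(L-\lambda_1)=1$) we get $u=c\phi_1$ with $c>0$. The case $\beta=\lambda_1\ne\alpha$ is symmetric and gives $u^+=0$, hence $u=-c\phi_1$.

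If $\alpha=\beta=\lambda_1$, the equation is linear, so $u\in N(L-\lambda_1)=\mathrm{span}\{\phi_1\}$, which is signed.

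\textbf{Part (ii), the explicit formula.} The formula $(\operatorname{sgn}|\beta-\lambda_1|-\operatorname{sgn}|\alpha-\lambda_1|)\phi_1$ returns $+\phi_1$ when $\beta\neq\lambda_1=\alpha$ and $-\phi_1$ in the opposite case, consistent with the above. It should be read as holding up to a positive multiplicative constant, or under the normalization $u\in\mathcal P$ used in Lemma \ref{le1}. Likewise "$u=\phi_1$" at $(\lambda_1,\lambda_1)$ means $u\in\mathbb R\phi_1$. I would state this normalization explicitly.

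\textbf{Main obstacle.} The only delicate point is justifying the identity $\int\nabla u\nabla\phi_1K=\lambda_1\int u\phi_1K$ for $u\in X$. This is immediate from the weak formulation of $L\phi_1=\lambda_1\phi_1$ with test function $u$, since $\phi_1\in X$. Positivity $\phi_1=e^{-|x|^2/4}>0$ is explicit, so no strong maximum principle is needed.
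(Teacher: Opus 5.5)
Your proof is correct. Part (i) follows the paper: a signed solution is an eigenfunction of $L$ with eigenvalue $\alpha$ (or $\beta$), and Lemma \ref{le75} forces that eigenvalue to be $\lambda_1$.

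Part (ii) is where you go beyond the paper, whose entire proof is ``using Lemma \ref{le75}''. That lemma does not actually reach this case. When $\alpha=\lambda_1\neq\beta$, the equation $Lu=\lambda_1u^+-\beta u^-$ is still nonlinear, so a hypothetical sign-changing solution is not an eigenfunction, and Lemma \ref{le75} says nothing about it. The missing ingredient is the identity you get by testing against $\phi_1$, which in general reads
\[
(\alpha-\lambda_1)\int_{\mathbb{R}^N}u^+\phi_1K\,dx=(\beta-\lambda_1)\int_{\mathbb{R}^N}u^-\phi_1K\,dx .
\]
Combined with $\phi_1K>0$ and the simplicity $\dim N(L-\lambda_1)=1$ from Lemma \ref{le0}, it settles (ii).

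The same identity also gives (i) directly: if $u\geq0$, $u\not\equiv0$, the right side vanishes while $\int u^+\phi_1K>0$, so $\alpha=\lambda_1$. So the whole corollary can be proved without Lemma \ref{le75}. That is worth knowing, because the paper's proof of Lemma \ref{le75} rests on a Picone-type step that is only cited from the literature.

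Your normalization caveat is also warranted. Read literally, every positive multiple of $\phi_1$ solves the problem, and at $(\lambda_1,\lambda_1)$ every real multiple does. The paper itself later uses ``$v=\pm\phi_1$ when $p=0$'' in the proof of Theorem \ref{t2}. It also tacitly treats $\phi_1$ as an element of $\mathcal{P}$ in Lemmas \ref{le2} and \ref{le3}, even though $|e^{-|x|^2/4}|_{K,2}^2=(4\pi)^{N/2}$. So stating the normalization explicitly, as you propose, is the right fix.
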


\begin{proof}
  Using Lemma \ref{le75}, we can easily obtain this conclusion.
\end{proof}
\begin{corollary}\label{le78}
For any sign changing solution of (\ref{ee1}),  the point $(\alpha, \beta)$ is on the upper right of the broken line $\mathcal{L}_2$, i.e., $\alpha>\lambda_1$ and $\beta>\lambda_1$.
\end{corollary}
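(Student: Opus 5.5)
Let $u$ be a sign-changing weak solution of (\ref{ee1}), so $u^{+}\not\equiv 0$ and $u^{-}\not\equiv 0$. We show $\alpha>\lambda_1$; the inequality $\beta>\lambda_1$ follows in the same way, or from the symmetry $u\mapsto -u$, which turns a solution for $(\alpha,\beta)$ into one for $(\beta,\alpha)$.

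\textbf{Step 1: an identity for $u^{+}$.} Test the weak formulation with $v=u^{+}\in X$. We use $\nabla u^{+}=\nabla u\,\chi_{\{u>0\}}$ and $u^{-}u^{+}=0$. This gives
$$
\int_{\mathbb{R}^N}|\nabla u^{+}|^2K(x)\,dx=\alpha\int_{\mathbb{R}^N}(u^{+})^2K(x)\,dx .
$$
The variational characterization of $\lambda_1$ (used already in Lemma \ref{le2}) gives $\|u^{+}\|^2\ge\lambda_1|u^{+}|_{K,2}^2$. Hence $\alpha\ge\lambda_1$.

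\textbf{Step 2: strict inequality.} Suppose $\alpha=\lambda_1$. Then $u^{+}$ attains the Rayleigh minimum, so it is a first eigenfunction of $L$. By Lemma \ref{le0} the first eigenspace is spanned by $\phi_1$, so $u^{+}=c\phi_1$ with $c>0$. This function is strictly positive everywhere. Then $u^{-}=0$ a.e., which contradicts that $u$ changes sign. Therefore $\alpha>\lambda_1$.

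The point needing care is the claim that a nonnegative minimizer of the Rayleigh quotient is a first eigenfunction. The minimizer satisfies the Euler--Lagrange equation, so it is an eigenfunction for $\lambda_1$. Simplicity of $\lambda_1$ in Lemma \ref{le0}, since $\dim N(L-\lambda_1)=1$, then identifies it as a multiple of $\phi_1$, so no maximum principle is needed. We also use the elementary facts that $u^{\pm}\in X$ and that $\nabla u^{\pm}$ are supported on $\{u\gtrless0\}$, which hold for this weighted space as for $H^1$ by density of $C_c^\infty$.

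Alternatively, one can argue directly from Corollary \ref{le77}(i),(ii). A sign-changing solution cannot have $(\alpha,\beta)\in\mathcal{L}$, so $\alpha\neq\lambda_1$ and $\beta\neq\lambda_1$; Step 1 then upgrades this to $\alpha>\lambda_1$ and $\beta>\lambda_1$.
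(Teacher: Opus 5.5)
Your proof is correct and follows essentially the paper's route. The paper's one-line proof invokes Corollary \ref{le77} to exclude $(\alpha,\beta)\in\mathcal{L}$ for a sign-changing solution, and leaves implicit exactly your Step 1 (testing with $u^{\pm}$ to get $\alpha=\|u^+\|^2/|u^+|_{K,2}^2\ge\lambda_1$ and $\beta=\|u^-\|^2/|u^-|_{K,2}^2\ge\lambda_1$); that combination is your ``alternative'' paragraph. Your Step 2 simply inlines why Corollary \ref{le77}(ii) holds, via the simplicity of $\lambda_1$ from Lemma \ref{le0} and the strict positivity of $\phi_1$, which makes the argument self-contained.
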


\begin{proof}
  Using Corollary \ref{le77}, we can easily obtain this conclusion.
\end{proof}

\begin{lemma}\label{le8}
(i) $\Sigma$ is closed in $\mathbb{R}^2$.

(ii) $\Sigma_0$ is also closed in $\mathbb{R}^2$. Moreover, $\bar{\Sigma}_0 \cap \mathcal{L}=\emptyset$.
\end{lemma}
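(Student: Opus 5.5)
For (i) I will take a sequence $(\alpha_n,\beta_n)\in\Sigma$ with $(\alpha_n,\beta_n)\to(\alpha,\beta)$ and nontrivial solutions $u_n$ of (\ref{ee1}) for $(\alpha_n,\beta_n)$. Since the equation is positively homogeneous, I normalize $|u_n|_{K,2}=1$. Testing the weak formulation with $v=u_n$ gives
$$\|u_n\|^2=\alpha_n|u_n^+|_{K,2}^2+\beta_n|u_n^-|_{K,2}^2\le \max\{|\alpha_n|,|\beta_n|\},$$
so $(u_n)$ is bounded in $X$. By Lemma \ref{le01}, after passing to a subsequence, $u_n\rightharpoonup u$ weakly in $X$ and $u_n\to u$ strongly in $L^2_K$. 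Then $u_n^\pm\to u^\pm$ in $L^2_K$, because $|a^\pm-b^\pm|\le|a-b|$. We can therefore pass to the limit in
$$\int\nabla u_n\nabla vK=\alpha_n\int u_n^+vK-\beta_n\int u_n^-vK$$
for each fixed $v\in X$. This shows that $u$ is a weak solution for $(\alpha,\beta)$, and it is nontrivial since $|u|_{K,2}=1$. Hence $\Sigma$ is closed.

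For (ii) I take $(\alpha_n,\beta_n)\in\Sigma_0$ converging to $(\alpha,\beta)$, with sign-changing solutions $u_n$ normalized as above. Part (i) gives a limit solution $u$, and the task is to show that $u$ still changes sign. Once that is known, $(\alpha,\beta)\in\Sigma_0$. Moreover $\bar\Sigma_0\cap\mathcal L=\Sigma_0\cap\mathcal L=\emptyset$ follows from Corollary \ref{le77}(i), since every solution at a point of $\mathcal L$ is signed.

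To show $u$ changes sign, I test the equation with $u_n^+$ and with $u_n^-$. Using $\int\nabla u_n\nabla u_n^+K=\|u_n^+\|^2$ and $\int\nabla u_n\nabla u_n^-K=-\|u_n^-\|^2$, this gives
$$\|u_n^+\|^2=\alpha_n|u_n^+|_{K,2}^2,\qquad \|u_n^-\|^2=\beta_n|u_n^-|_{K,2}^2.$$
Both $u_n^\pm$ are nonzero. Hence $\|u_n^\pm\|/|u_n^\pm|_{K,2}$ stays bounded, by $\sqrt{\alpha_n}$ and $\sqrt{\beta_n}$, and these bounds are uniformly bounded because the $(\alpha_n,\beta_n)$ converge. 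By Lemma \ref{le76} with $q=2$, the measures $|\{u_n>0\}|$ and $|\{u_n<0\}|$ cannot tend to $0$ along any subsequence. So there is $\delta>0$ with $|\{u_n>0\}|\ge\delta$ and $|\{u_n<0\}|\ge\delta$ for all $n$.

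The main obstacle is turning this measure bound into $u^+\not\equiv0$ and $u^-\not\equiv0$, since $L^2_K$ convergence alone does not forbid $u^+=0$. I would first try the following compactness argument. Put $w_n=u_n^+/|u_n^+|_{K,2}$. This sequence is bounded in $X$ by the ratio bound above, so by Lemma \ref{le01} a subsequence converges in $L^2_K$ to some $w\ge0$ with $|w|_{K,2}=1$. Suppose $u^+=0$. Then $|u_n^+|_{K,2}\to0$, and the relation $\|u_n^+\|^2=\alpha_n|u_n^+|_{K,2}^2$ shows that $u_n^+\to0$ in $X$. I do not see how to conclude from this alone. Instead I would use Sobolev/Hölder on the set $\Omega_n^+=\{u_n>0\}$. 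Because $K\ge1$,
$$|u_n^+|_{K,2}^2\le |\Omega_n^+|^{2/N}\,|u_n^+|_{K,2^*}^2\le C|\Omega_n^+|^{2/N}\|u_n^+\|^2=C\alpha_n|\Omega_n^+|^{2/N}|u_n^+|_{K,2}^2,$$
where the first step is Hölder with exponents $2^*/2$ and $N/2$, and the second is the embedding of Lemma \ref{le01}. Note that $K^{2/2^*}\le K$ when $K\ge1$. Dividing by $|u_n^+|_{K,2}^2$ yields $|\Omega_n^+|\ge(C\alpha_n)^{-N/2}$. The same bound holds for $\Omega_n^-$.

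This Hölder estimate does not by itself give $u^+\not\equiv0$. The cleaner route is a contradiction argument. If $u^+=0$, then $u=-u^-$ is a signed nontrivial solution, so Corollary \ref{le77} forces $\beta=\lambda_1$ and $u=-\phi_1$. Since $\phi_1>0$ everywhere and $u_n\to-\phi_1$ in $L^2_K$, the sets $\{u_n>0\}$ have measure tending to $0$ on every bounded set. Their tails are controlled via $K$: the Chebyshev-type estimate $|\{u_n>0\}\cap\{|x|>R\}|$ is handled through $w_n$ and the weight, as in Lemma \ref{le3}. This forces $|\Omega_n^+|\to0$ and contradicts Lemma \ref{le76}. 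For $N\le2$, where $2^*$ is not a finite exponent, I would replace $2^*$ by a large finite $q$ in the Hölder estimate.
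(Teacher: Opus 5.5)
Part (i) is fine. You use only weak convergence in $X$ plus strong convergence in $L^2_K$, whereas the paper upgrades to strong convergence in $X$; either works.

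The gap is in the last paragraph of (ii). There you assert that the tails $|\{u_n>0\}\cap\{|x|>R\}|$ are small uniformly in $n$, so that $|\Omega_n^+|\to 0$ and Lemma \ref{le76} applies. From $u_n\to -c\phi_1$ in $L^2_K$, Chebyshev only gives $\int_{\{u_n>0\}}\phi_1^2K\,dx=\int_{\{u_n>0\}}e^{-|x|^2/4}\,dx\to 0$. Using $K\ge 1$, it also gives $|\{u_n\ge\varepsilon\}|\to 0$ for each fixed $\varepsilon>0$, which is the kind of bound appearing in Lemma \ref{le3}. Neither controls the Lebesgue measure of the level-zero set far out, where $\phi_1$ is tiny. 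The implication is in fact false for general sequences: for $4s>N+2$, $v_\varepsilon=-\phi_1+\varepsilon(1+|x|^2)^{-s}e^{-|x|^2/8}$ lies in $X$, tends to $-\phi_1$ in $X$ as $\varepsilon\to 0$, and is positive outside a ball, so $|\{v_\varepsilon>0\}|=\infty$. You invoke no property of Fu\v{c}ik solutions that would rule this out. The paper's proof makes the same leap (``This implies that $|u_n>0|\to 0$'') without argument, so following it does not close the gap.

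The repair is the compactness argument you set up and then abandoned, combined with the identification from Corollary \ref{le77}. When you wrote ``I do not see how to conclude'', the missing ingredient was that $u<0$ \emph{everywhere}, not merely $u\le 0$.
\begin{itemize}
\item Suppose $u^+=0$, so $u=-c\phi_1$ with $c>0$. Put $w_n=u_n^+/|u_n^+|_{K,2}$.
\item Since $\|w_n\|^2=\alpha_n$ is bounded, Lemma \ref{le01} gives a subsequence with $w_n\to w$ in $L^2_K$ and a.e., and $|w|_{K,2}=1$.
\item Along a further subsequence $u_n\to -c\phi_1<0$ a.e. Hence for a.e. $x$ we have $u_n(x)<0$, and so $w_n(x)=0$, for all large $n$.
\item This gives $w=0$ a.e., a contradiction.
\end{itemize}
The case $u^-=0$ is symmetric, so $u$ changes sign and $(\alpha,\beta)\in\Sigma_0$. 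Equivalently, Lemma \ref{le76} stays true with $|\Omega_n|\to 0$ replaced by $\mathbf{1}_{\Omega_n}\to 0$ a.e., because its dominated-convergence step only uses a.e. convergence.

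Two smaller points. First, your H\"older estimate is wrong as written. H\"older with exponents $2^*/2$ and $N/2$ produces $\int (u_n^+)^{2^*}K^{2^*/2}$, and $K^{2^*/2}\ge K$, so the correct bound involves $\int_{\Omega_n^+}K\,dx$ rather than $|\Omega_n^+|$. This is harmless since you drop that route. Second, the fact that solutions at points of $\mathcal L$ are signed is Corollary \ref{le77}(ii), not (i).
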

\begin{proof}
(i) Let a sequence $\left\{\left(\alpha_n, \beta_n\right)\right\} \subset \Sigma$ satisfy $\left(\alpha_n, \beta_n\right) \rightarrow(\alpha, \beta)$ as $n \rightarrow \infty$. We prove that $(\alpha, \beta) \in \Sigma$. First, we know that there exists a sequence $\left\{u_n\right\} \subset \mathcal{P}$ such that

\begin{equation}\label{ee17}
\left\{\begin{array}{l}
L u_n:=-\Delta u_n-\frac{1}{2}(x \cdot \nabla u_n)=\alpha_n u_n^{+}-\beta_n u_n^{-}, \\
u_n\in X .
\end{array}\right.
\end{equation}
Obviously, $\left\{u_n\right\}$ is bounded in $X$.
Hence, we may assume that by passing to a subsequence, if necessary, still denoted by $\left\{u_n\right\}$
that $u_n \rightharpoonup u \in X$.
 Thus, $|u|_{K,2}=1$ by $\left\{u_n\right\} \subset \mathcal{P}$ and Lemma \ref{le01}.
 Denote a functional $I_{\alpha_n, \beta_n} \in C^1(X, \mathbb{R})$ by
$$
I_{\alpha_n, \beta_n}(v)=\|v\|^2-\alpha_n\left|v^{+}\right|_{K,2}^2+\beta_n\left|v^{-}\right|_{K,2}^2, \quad v \in X .
$$
From $\left\langle I_{\alpha_n, \beta_n}^{\prime}\left(u_n\right), u_n-u\right\rangle=0, u_n \rightarrow u$ in $L_K^2(\mathbb{R}^N)$
and the boundedness of $\left\{u_n\right\},\left\{\alpha_n\right\}$ and $\left\{\beta_n\right\}$, we conclude that $u_n \rightarrow u$ in $X$.
Consequently, $u$ is a nontrivial solution of (\ref{ee1}), which means that $(\alpha, \beta) \in \Sigma$.

(ii) Let a sequence $\left\{\left(\alpha_n, \beta_n\right)\right\} \subset \Sigma_0$ satisfy $\left(\alpha_n, \beta_n\right) \rightarrow(\alpha, \beta)$ as $n \rightarrow \infty$. We obtain $(\alpha, \beta) \in \Sigma$ by (i). We claim that $(\alpha, \beta) \notin \mathcal{L}$, and thus the result holds by $\Sigma=\mathcal{L} \cup \Sigma_0$. Arguing by contradiction, we assume that $(\alpha, \beta) \in \mathcal{L}$. Let $u_n$ and $u$ be nontrivial solutions of (\ref{ee1}) corresponding to $\left(\alpha_n, \beta_n\right)$ and $(\alpha, \beta)$, respectively.

Then, $u$ is signed and $u=- \phi_1$ if $u \leqslant 0, u= \phi_1$ if $u \geqslant 0$ by Corollary \ref{le77}.
This implies that
$\left|u_n>0\right|:=\left|\left\{x \in \mathbb{R}^N: u_n(x)>0\right\}\right| \rightarrow 0$
or $\left|u_n<0\right|:=\left|\left\{x \in \mathbb{R}^N: u_n(x)<0\right\}\right| \rightarrow 0$. Since $u_n$ is sign changing for all $n$, by using Lemma \ref{le76} for $u_n^{+}$ and $-u_n^{-}$, we derive
$$
\alpha_n= \frac{\left\|u_n^{+}\right\|^2}{\left|u_n^{+}\right|_{K,2}^2} \rightarrow \infty
$$
or
$$
\beta_n= \frac{\left\|u_n^{-}\right\|^2}{\left|u_n^{-}\right|_{K,2}^2} \rightarrow \infty,
$$
which is a contradiction.
\end{proof}

 Similar to \cite[Lemma 3.5]{bur4}, the following lemma is apparent.
\begin{lemma}\label{le9}
There are some topological properties on $\mathcal{P}$; in particular,

(i) $\mathcal{P}$ is locally arcwise connected;

(ii) any connected open subset $O$ of $\mathcal{P}$ is arcwise connected;

(iii) if $O_1$ is a component of an open set $O \subset \mathcal{P}$, then $\partial O_1 \cap O=\emptyset$.
\end{lemma}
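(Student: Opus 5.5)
Since $\mathcal{P}$ is the unit sphere of $L_K^2$ intersected with $X$, I will transfer the topological facts about spheres in a normed space to $\mathcal{P}$, following \cite[Lemma 3.5]{bur4}. The one subtlety is that the ambient topology on $\mathcal{P}$ is that of $X$, while the constraint is expressed through the weaker norm $|\cdot|_{K,2}$. The two facts that make everything work are these. First, the map $u\mapsto |u|_{K,2}$ is continuous on $X$, by Lemma \ref{le01}. Second, the radial projection $\pi(v)=v/|v|_{K,2}$ is continuous from $X\setminus\{0\}$ onto $\mathcal{P}$, and it restricts to the identity on $\mathcal{P}$.

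For (i), fix $u_0\in\mathcal{P}$ and a ball $B(u_0,r)\cap\mathcal{P}$ in the $X$-norm. Take $u\in\mathcal{P}$ with $\|u-u_0\|<\rho$, where $\rho$ is small. The segment $s\mapsto (1-s)u_0+su$ lies in $B(u_0,\rho)$. It avoids $0$, because $|(1-s)u_0+su|_{K,2}\ge 1-S_{K,2}\rho>0$. I project this segment by $\pi$ to obtain a path in $\mathcal{P}$ from $u_0$ to $u$. A point $v$ on the segment satisfies
\[
\|\pi(v)-u_0\|\le \|v-u_0\|+\bigl|1-|v|_{K,2}\bigr|\,\|v\|/|v|_{K,2}.
\]
Here $\bigl|1-|v|_{K,2}\bigr|\le S_{K,2}\rho$ and $\|v\|\le\|u_0\|+\rho$. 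Hence the whole path stays inside $B(u_0,r)$ once $\rho$ is chosen small relative to $r$, $\|u_0\|$ and $S_{K,2}$. This yields, for every neighbourhood of $u_0$, a smaller neighbourhood whose points are all joined to $u_0$ by arcs inside the original one, which is local arcwise connectedness. Obtaining this explicit estimate is the only real computation, and I expect it to be the main (mild) obstacle, since $X$-closeness must be controlled after renormalisation in $L_K^2$.

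For (ii), I take a connected open set $O\subset\mathcal{P}$ and fix $u\in O$. Let $A$ be the set of points of $O$ that can be joined to $u$ by a path in $O$. By (i), applied inside the open set $O$, both $A$ and $O\setminus A$ are open, because path-connected neighbourhoods can be concatenated with paths. The set $A$ is nonempty, so connectedness gives $A=O$.

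For (iii), components of a locally connected space are open, and (i) implies that $\mathcal{P}$ is locally connected, so every component $O_1$ of the open set $O$ is open. Suppose $v\in\partial O_1\cap O$. By (i), choose a path-connected neighbourhood $V\subset O$ of $v$. Since $v$ lies in $\partial O_1$, the set $V$ meets $O_1$. Then $O_1\cup V$ is a connected subset of $O$, so maximality gives $V\subset O_1$, and in particular $v\in O_1$. But $O_1$ is open, so $v\notin\partial O_1$, which is a contradiction.
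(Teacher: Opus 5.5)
Your proposal is correct and takes essentially the paper's route: the paper gives no argument of its own beyond deferring to \cite[Lemma 3.5]{bur4}, which likewise gets (i) from radial projection onto the constraint set (needing only continuity of $|\cdot|_{K,2}$ on $X$, here Lemma \ref{le01}) and (ii)--(iii) from general topology, as you do. The one step you leave implicit is passing from ``points near $u_0$ join $u_0$ inside $B(u_0,r)$'' to a path-connected open neighbourhood (which you use in (ii) and (iii)); your estimate gives this at once, since $\pi(B(u_0,\rho))=\mathcal{P}\cap\bigcup_{t>0}tB(u_0,\rho)$ is open in $\mathcal{P}$, path-connected by convexity of the ball, and contained in $B(u_0,r)$.
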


\begin{lemma}\label{le10}
 Define $O=\left\{u \in \mathcal{P}: \tilde{I}_p(u)<r\right\}$. Then any component of $O$ contains a critical point of $\tilde{I}_p$ on $\mathcal{P}$.
\end{lemma}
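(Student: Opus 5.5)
The plan follows Cuesta, de Figueiredo and Gossez \cite[Lemma 3.6]{bur4}: run a deformation-type argument inside a fixed component, with the (PS) condition supplied by Lemma \ref{le5} and the topology of $\mathcal{P}$ supplied by Lemma \ref{le9}.

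\textbf{Setup.} Let $O_1$ be a component of $O$ and set
$$d:=\inf\{\tilde{I}_p(u): u\in O_1\}.$$
By Lemma \ref{le2}, $\tilde{I}_p\geq \lambda_1-p$ on $\mathcal{P}$, so $d$ is finite, and $d<r$ because $O_1\neq\emptyset$. The aim is to show that $d$ is attained at some $v\in O_1$. Since $O_1$ is open in $\mathcal{P}$ (Lemma \ref{le9}(i),(iii)), such a $v$ is a local minimum of $\tilde{I}_p$ on $\mathcal{P}$ and hence a critical point.

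\textbf{Step 1: an almost-minimizing sequence.} Take $u_k\in O_1$ with $\tilde{I}_p(u_k)\leq d+\frac{1}{k^2}$. Work on the closed set $\overline{O_1}$, the closure in $\mathcal{P}$, which is a complete metric space with the $X$-distance. By Lemma \ref{le9}(iii), $\partial O_1\cap O=\emptyset$, so $\tilde{I}_p\geq r>d$ on $\partial O_1$. Hence $\inf_{\overline{O_1}}\tilde{I}_p=d$ by continuity, and for $k$ large every point of $\overline{O_1}$ with value $\leq d+\frac1{k^2}$ lies in $O_1$. Apply Ekeland's variational principle on $\overline{O_1}$ to get $v_k\in\overline{O_1}$ with
$$\tilde{I}_p(v_k)\le \tilde{I}_p(u_k),\qquad \|v_k-u_k\|\le \tfrac1k,\qquad \tilde{I}_p(v_k)\le \tilde{I}_p(u)+\tfrac1k\|u-v_k\|\ \text{ for all } u\in\overline{O_1}.$$
The first inequality gives $\tilde{I}_p(v_k)\leq d+\frac1{k^2}<r$, so $v_k\in O_1$, which is an open subset of $\mathcal{P}$.

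\textbf{Step 2: $\{v_k\}$ is a PS sequence.} Because $v_k$ is an interior point of $O_1$, for each tangent direction $w$ (that is, $\int_{\mathbb{R}^N} v_k wK\,dx=0$) the curve $u_t$ of \eqref{ee13} stays in $O_1$ for $|t|$ small. The computation in the proof of Lemma \ref{le6} then applies verbatim: it yields \eqref{ee15}, and after the correction by $\alpha_k v_k$ it gives $\|\tilde{I}_p'(v_k)\|_*\to0$ on $\mathcal{P}$. The values $\tilde{I}_p(v_k)$ are bounded, so $\{v_k\}$ is a PS sequence.

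\textbf{Step 3: passing to the limit.} By Lemma \ref{le5}, a subsequence satisfies $v_k\to v$ in $X$. Since $\mathcal{P}$ is closed, $v\in\mathcal{P}$, and $\tilde{I}_p(v)=d<r$, so $v\in O$. Also $v\in\overline{O_1}$, and since $\partial O_1\cap O=\emptyset$, $v\notin\partial O_1$; hence $v\in O_1$. Being a limit of PS points, $v$ is a critical point of $\tilde{I}_p$ on $\mathcal{P}$; alternatively, it is an interior minimizer. This proves the lemma.

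\textbf{Main obstacle.} The delicate point is in Step 1: making sure the Ekeland points do not escape to the boundary of $O_1$. This is exactly where the strict gap $\tilde{I}_p\geq r>d$ on $\partial O_1$, supplied by Lemma \ref{le9}(iii), is used. The resulting interiority is what makes the tangential variations $u_t$ admissible in Step 2.
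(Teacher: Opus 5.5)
Your proposal is correct and follows the paper's proof essentially step by step: Ekeland's principle on $\overline{O_1}$, interiority of $v_k$ because $\partial O_1\cap O=\emptyset$ forces $\tilde I_p\ge r>d+1/k^2$ on $\partial O_1$, the tangential-variation argument of Lemma \ref{le6} to get a PS sequence, and then Lemma \ref{le5} together with Lemma \ref{le9}(iii) to place the limit inside $O_1$. One small slip: the bound $\tilde I_p\ge\lambda_1-p$ from Lemma \ref{le2} only holds for $p\ge 0$. For general $p$ you should use $\tilde I_p\ge\lambda_1-\max\{p,0\}$, which still makes $d$ finite.
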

\begin{proof}
  Let $O_1$ be a component of $O$ and let $d=\inf \left\{I_p(u): u \in \bar{O}_1\right\}<r$.
  We prove that the infimum $d$ is achieved at some $u \in \bar{O}_1$.
  In fact, let $\left\{u_n\right\} \subset \bar{O}_1$ be a minimizing sequence that satisfies
 $$I_p\left(u_n\right) \leqslant d+1 / n^2$$ for all $n$.
 From Ekeland's variational principle, it follows that there exists a sequence $\left\{v_n\right\} \subset \bar{O}_1$  such that
\begin{equation}\label{ee18}
 \tilde{I}_p\left(v_n\right) \leqslant \tilde{I}_p\left(u_n\right),
\end{equation}
\begin{equation}\label{ee19}
 \left\|v_n-u_n\right\| \leqslant \frac{1}{n},
\end{equation}
\begin{equation}\label{ee20}
\tilde{I}_p\left(v_n\right) \leqslant \tilde{I}_p(v)+\frac{1}{n}\left\|v-v_n\right\|, \quad v \in \bar{O}_1.
\end{equation}
By (\ref{ee18}), we derive that $\left\{v_n\right\}$ is bounded in $X$. Moreover, we take $w \in T_{v_n} \mathcal{P}$ for the fixed $n$ with $d+1 / n^2<r$, i.e.,
$$
\int_{\mathbb{R}^N} v_n w K(x)dx=0,
$$
and consider $u_t$ as defined by (\ref{ee13}). We observe that $v_n \in O_1$. If $v_n \in \partial O_1$, then $\tilde{I}_p\left(v_n\right)=r$ from Lemma \ref{le9}, which contradicts
$$
\tilde{I}_p\left(v_n\right) \leqslant \tilde{I}_p\left(u_n\right) \leqslant d+1 / n^2<r .
$$
Hence, $u_t \in O_1$ for the sufficiently small $|t|$. By taking $v=u_t$ in (\ref{ee20}) and using the same method in Lemma \ref{le6}, we see that $\left\{v_n\right\}$ is a PS sequence for $\tilde{I}_p$ on $\mathcal{P}$. According to Lemma \ref{le5}, the claim holds. If $u \in \partial O_1$, then this is a contradiction by Lemma \ref{le9}. Hence, $u \in O_1$.
\end{proof}

Now we provide the proof of Theorem \ref{t2}.
\begin{proof} [Proof of Theorem \ref{t2}]
  First, we consider the case where $p \in \mathbb{R}_{+}$. By contradiction,
  assume that there exists some $\beta \in\left(\lambda_1, c(p)\right)$  such that $(p+\beta, \beta) \in \Sigma_0$.
   By Lemma \ref{le8}, we can choose $b \in\left(\lambda_1, \beta\right]$ such that $b$ is a critical value of $\tilde{I}_p$ on $\mathcal{P}$,
  and there is no critical value in ( $\lambda_1, b$ ). In the following, we construct a path $\gamma \in \Gamma$ that satisfies $\tilde{I}_p(\gamma(t)) \leqslant b$
  for all $t \in[-1,1]$, which yields a contradiction of the definition of $c(p)$.

  Let $u \in \mathcal{P}$ be a critical point of $\tilde{I}_p$ at level $b$. Then, $u$ satisfies the equation
\begin{equation}\label{ee21}
\left\{\begin{array}{l}
L u=-\Delta u-\frac{1}{2}(x \cdot \nabla u)=(p+b) u^{+}-b u^{-}, \\
u\in X .
\end{array}\right.
\end{equation}
and we know that $u$ changes sign in $\mathbb{R}^N$ by Corollary \ref{le77}. From this equation, it holds that
$$
\left\|u^{+}\right\|^2=(p+b)\left|u^{+}\right|_{K,2}^2
$$
and
$$
\left\|u^{-}\right\|^2=b\left|u^{-}\right|_{K,2}^2.
$$
Consequently, we see that

$$
\tilde{I}_p\left(\frac{u^{+}}{\left|u^{+}\right|_{K,2}}\right)=b, \quad \tilde{I}_p\left(\frac{u^{-}}{\left|u^{-}\right|_{K,2}}\right)=b-p, \quad \tilde{I}_p\left(\frac{-u^{-}}{\left|u^{-}\right|_{K,2}}\right)=b.
$$
Now, we consider the following path $\gamma_1$ in $\mathcal{P}$, which tends from $u$ to $u^{-} /\left|u^{-}\right|_{K,2}$,
\begin{equation}\label{ee211}
 \gamma_1(t)=\frac{(1-t) u+tu^{-}}{\left|(1-t) u+tu^{-}\right|_{K,2}}, \quad t \in[0,1].
\end{equation}
We can show that $\tilde{I}_p\left(\gamma_1(t)\right) \leqslant b$ for all $t \in[0,1]$. In fact, a simple calculation shows that
$$
(1-t) u+tu^{-}=(1-t) u^{+}+(2 t-1) u^{-}, \quad t \in[0,1]
$$
and
$$
\left[(1-t) u+t\left(u^{-}\right)\right]^{+}= \begin{cases}(1-t) u^{+}+(2 t-1) u^{-}, & t \in[0,1 / 2], \\  (1-t) u^{+} , & t \in[1 / 2,1].\end{cases}
$$
Hence, for $t \in[1 / 2,t]$,
$$
\begin{aligned}
\tilde{I}_p\left(\gamma_1(t)\right) & =\left\|\gamma_1(t)\right\|^2-p \left|\left[\gamma_1(t)\right]^{+}\right|_{K,2} ^2 \\
& =\frac{\left\|(1-t) u^{+}+(2 t-1) u^{-}\right\|^2-p\left|(1-t) u^{+}\right|_{K,2}^2}{\left|(1-t) u^{+}+(2 t-1) u^{-}\right|_{K,2}^2} \\
& = \frac{(1-t)^2\left\|u^{+}\right\|^2+(2 t-1)^2\left\|u^{-}\right\|^2-p(1-t)^2\left|u^{+}\right|_{K,2}^2}{(1-t)^2\left|u^{+}\right|_{K,2}^2
+(2 t-1)^2\left|u^{-}\right|_{K,2}^2} \\
& =\frac{b(1-t)^2\left|u^{+}\right|_{K,2}^2+b(2 t-1)^2\left|u^{-}\right|_{K,2}^2}{(1-t)^2\left|u^{+}\right|_{K,2}^2+(2 t-1)^2\left|u^{-}\right|_{K,2}^2}=b .
\end{aligned}
$$
Similarly, for $t \in[0,1 / 2]$,
$$
\begin{aligned}
\tilde{I}_p\left(\gamma_1(t)\right) & =\left\|\gamma_1(t)\right\|^2-p \left|\left[\gamma_1(t)\right]^{+}\right|_{K,2} ^2 \\
& =\frac{\left\|(1-t) u^{+}+(2 t-1) u^{-}\right\|^2-p\left|(1-t) u^{+}+(2 t-1) u^{-}\right|_{K,2}^2}{\left|(1-t) u^{+}+(2 t-1) u^{-}\right|_{K,2}^2} \\
& = \frac{(1-t)^2\left\|u^{+}\right\|^2+(2 t-1)^2\left\|u^{-}\right\|^2-p(1-t)^2\left|u^{+}\right|_{K,2}^2-p(2t-1)^2\left|u^{-}\right|_{K,2}^2}{(1-t)^2\left|u^{+}\right|_{K,2}^2
+(2 t-1)^2\left|u^{-}\right|_{K,2}^2} \\
& =\frac{b(1-t)^2\left|u^{+}\right|_{K,2}^2+(b-p)(2 t-1)^2\left|u^{-}\right|_{K,2}^2}{(1-t)^2\left|u^{+}\right|_{K,2}^2+(2 t-1)^2\left|u^{-}\right|_{K,2}^2}\leq b .
\end{aligned}
$$
Therefore, we can start from $u$ to $u^{-} /\left|u^{-}\right|_{K,2}$ along $\gamma_1$ and the energy is not higher than $b$ on this path.
In order to continue, we must study the levels below $b-p$.
Let $O=\left\{v \in \mathcal{P}: \tilde{I}_p(v)<b-p\right\}$. Obviously, $\phi_1 \in O$, when $-\phi_1 \in O$ if $b-p>\lambda_1$.
Furthermore, it is easy to see that $\phi_1$ and $-\phi_1$ are the only possible critical points of $\tilde{I}_p$ in $O$.
Indeed, if we assume that $v \in O$ is a critical point of $\tilde{I}_p$ on $\mathcal{P}$,
then we have $c:=\tilde{I}_p(v) \leqslant \lambda_1$ based on the choice of $b$.
Firstly, we consider $c<\lambda_1$. Using $(p+c, c) \in \Sigma$,
 Corollary \ref{le77} and Corollary \ref{le78} it follows that $p+c=\lambda_1$.
Thus, according to Corollary \ref{le77} again, $v=\phi_1$. Second,
if we assume that $c=\lambda_1$, then $\left(p+\lambda_1, \lambda_1\right) \in \mathcal{L}$.
By combining with Corollary \ref{le77}, we deduce $v= \pm \phi_1$ when $p=0$, and $v=-\phi_1$ when $p>0$. Therefore, this statement holds true.

Since $u^{-} /\left|u^{-}\right|_{K,2} \in O$, which does not change sign and vanishes on a set of positive measure,
we see that $u^{-} /\left|u^{-}\right|_{K,2} \neq \pm \phi_1$ and it is not a critical point of $\tilde{I}_p$ on $\mathcal{P}$.
From Lemma \ref{le8} and Lemma \ref{le9},
we know that there is a path $\gamma_2$ in $O$ from $u^{-} /\left|u^{-}\right|_{K,2}$ to $\phi_1$ or $-\phi_1$,
and the energy along this path is lower than $b-p$.
Let us assume that it is $\phi_1$ and the other case is similar.

Finally, we construct a path from $-\phi_1$ to $\phi_1$ where the energy is not higher than $b$.
Clearly, $-\gamma_2$ goes from $-u^{-} /\left|u^{-}\right|_{K,2}$ to $-\phi_1$. For all $w \in \mathcal{P}$,
$$
\left|\tilde{I}_p(w)-\tilde{I}_p(-w)\right| \leqslant p,
$$
so we obtain
$$
\tilde{I}_p\left(-\gamma_2(t)\right) \leqslant \tilde{I}_p\left(\gamma_2(t)\right)+p \leqslant b, \quad t \in[0,1] .
$$
Hence, the inverse path $\left(-\gamma_2\right)^{-1}$ goes from $-\phi_1$ to $-u^{-} /\left|u^{-}\right|_{K,2}$ and the energy is not higher than $b$ on this path. Now, let
\begin{equation}\label{ee212}
 \gamma_3(t)=\frac{-(1-t) u^{-}+t u}{\left|-(1-t) u^{-}+t u\right|_{K,2}}, \quad t \in[0,1] .
\end{equation}
Clearly, the path $\gamma_3$ brings us back from $-u^{-} /\left|u^{-}\right|_{K,2}$ to $u$
and the energy is not higher than $b$ on this path according to a similar method
to that mentioned earlier.
By combining everything together in proper order $\left(-\gamma_2\right)^{-1}, \gamma_3, \gamma_1, \gamma_2$,
 we have constructed a path $\gamma_4$ in $\mathcal{P}$ from $-\phi_1$ to $\phi_1$
 where the energy is not higher than $b$. Finally, if we let $\gamma(t)=\gamma_4((t+1) / 2)$
  for all $t \in[-1,1]$, then $\gamma \in \Gamma$ is the desired path. Hence, the result holds for $p \in \mathbb{R}_{+}$.

Now, we show that the result is still correct for $p \in(-\infty, 0)$. In fact,
we suppose by contradiction that there is a $\beta \in\left(\lambda_1-p, c(p)\right)$
 such that $(p+\beta, \beta) \in \Sigma_0$. According to the symmetry of $\Sigma_0$,
it follows that $(\beta, p+\beta) \in \Sigma_0$. Since $\lambda_1<p+\beta<p+c(p)=c(-p)$
by Lemma \ref{le91}, for $-p>0$, we deduce that there exists $p+\beta \in\left(\lambda_1, c(-p)\right)$
 such that $(-p+(p+\beta), p+\beta) \in \Sigma_0$, which contradicts the conclusion obtained above.
\end{proof}

\section{Some properties of the curve $\mathcal{C}$}

Let
$$
\mathcal{C}:=\{(p+c(p), c(p)): p \in \mathbb{R}\}.
$$
In this section, we prove that the curve $\mathcal{C}$ is Lipschitz continuous, exhibits a certain asymptotic behavior, and is strictly decreasing.
We  prove the following three main results.

\begin{theorem}\label{t41}
The map $(p+c(p), c(p)), p \in \mathbb{R}$ is Lipschitz continuous on $\mathbb{R}$.
\end{theorem}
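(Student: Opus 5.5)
The plan is to show that $p\mapsto c(p)$ is monotone and $1$-Lipschitz, with slope between $-1$ and $0$. Lipschitz continuity of the planar map $p\mapsto(p+c(p),c(p))$ then follows at once. The basic tool is a pointwise comparison of the functionals $\tilde I_p$ for different values of $p$, transferred to the minimax level $c(p)$ defined in (\ref{ee16}). For every $p$ the admissible class $\Gamma$ consists of paths in $\mathcal{P}$ joining $-\phi_1$ to $\phi_1$, and it does not depend on $p$. Note that Lemma \ref{le7} needs the mountain-pass geometry only to guarantee that $c(p)$ is a critical value. For the comparison itself, the inf-max formula suffices, and it makes sense for every $p\in\mathbb{R}$. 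For negative $p$ one can also pass through Lemma \ref{le91}.

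First, fix $p<q$. For each $u\in\mathcal{P}$,
$$\tilde I_p(u)-\tilde I_q(u)=(q-p)\,|u^+|_{K,2}^2\in[0,\,q-p],$$
since $0\le |u^+|_{K,2}^2\le |u|_{K,2}^2=1$. So $\tilde I_q\le \tilde I_p\le \tilde I_q+(q-p)$ on $\mathcal{P}$. Take any $\gamma\in\Gamma$ and maximize over $\gamma[-1,1]$:
$$\max_{\gamma}\tilde I_q\le\max_\gamma\tilde I_p\le \max_\gamma \tilde I_q+(q-p).$$
Taking the infimum over $\gamma\in\Gamma$ gives
$$c(q)\le c(p)\le c(q)+(q-p),$$
that is, $0\le c(p)-c(q)\le q-p$. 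Hence $c$ is nonincreasing and $1$-Lipschitz on $\mathbb{R}$. For the first coordinate, the same inequality rearranges to
$$(q+c(q))-(p+c(p))=(q-p)-(c(p)-c(q))\in[0,\,q-p].$$
So $p\mapsto p+c(p)$ is nondecreasing and $1$-Lipschitz as well.

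Combining the two coordinates,
$$\bigl|(q+c(q),c(q))-(p+c(p),c(p))\bigr|\le\sqrt{2}\,|q-p|,$$
which is the claimed Lipschitz continuity. If one prefers to invoke only results already established for $p\ge0$, the case $p<0$ can be reduced to it through $c(-p)=c(p)+p$ from Lemma \ref{le91}. This identity swaps the roles of the two coordinates, so the bounds for $p\ge0$ carry over to $p<0$ with the same constants. Finally, the two half-lines are glued continuously at $p=0$, where the one-sided bounds agree.

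The only delicate point I expect is justifying that the inf-max inequalities pass correctly through the infimum. This works because $\Gamma$ is the same set for all $p$ and is nonempty by Lemma \ref{le7}. The values $c(p)$ are finite because $c(p)>\lambda_1$ and the explicit path constructed in Lemma \ref{le7} gives an upper bound. Everything else is the elementary estimate $0\le|u^+|_{K,2}^2\le1$ on $\mathcal{P}$.
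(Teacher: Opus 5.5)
Your proposal is correct and is essentially the paper's argument: the pointwise identity $\tilde I_p(u)-\tilde I_q(u)=(q-p)|u^+|_{K,2}^2\in[0,q-p]$ on $\mathcal{P}$, pushed through the inf--max over the common path class $\Gamma$. Your version is slightly cleaner than the paper's: you get $0\le c(p)-c(q)\le q-p$ for all $p<q$ at once, without the $\varepsilon$-optimal path or the case split by the signs of $p_1,p_2$ (which the estimate never needed), and you also state explicitly the bound for the first coordinate $p+c(p)$.
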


\begin{theorem}\label{t42}
Let $p_1, p_2 \in \mathbb{R}$ with $p_1<p_2$. Then, $p_1+c\left(p_1\right)<p_2+c\left(p_2\right)$ and $c\left(p_1\right)>c\left(p_2\right)$.
\end{theorem}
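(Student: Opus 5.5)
We prove Theorem \ref{t42} in two halves: first a weak monotonicity of $c$ from the pointwise monotonicity of $\tilde{I}_p$ in $p$, then strictness from the variational characterization in Theorem \ref{t2}. The key observation is that for fixed $u\in\mathcal{P}$,
$$
\tilde{I}_{p_1}(u)-\tilde{I}_{p_2}(u)=(p_2-p_1)\left|u^{+}\right|_{K,2}^2\in[0,\,p_2-p_1].
$$
Since $\Gamma$ does not depend on $p$, taking max over a path and inf over $\Gamma$ yields
$$
c(p_2)\le c(p_1)\le c(p_2)+(p_2-p_1).
$$
The right inequality is exactly $p_1+c(p_1)\le p_2+c(p_2)$, and the left one is $c(p_1)\ge c(p_2)$. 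So weak monotonicity of both coordinates follows at once.

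It remains to exclude equality, and this is the main obstacle. I first treat $c(p_1)=c(p_2)=:c$. Let $\beta$ denote this common value. Then both $(p_1+\beta,\beta)$ and $(p_2+\beta,\beta)$ lie in $\Sigma_0$, by Theorem \ref{t2}. Take an optimal path $\gamma\in\Gamma$ for $p_1$, or an almost optimal one. Along it we have $\tilde{I}_{p_2}\le\tilde{I}_{p_1}$, with strict inequality wherever $u^{+}\not\equiv 0$.

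\textbf{Attempt via paths.} I would aim to show that the maximum of $\tilde{I}_{p_2}$ on a suitable path is strictly below $c$. Since $\Gamma$ is only an infimum, this needs a deformation argument, so I would instead use Theorem \ref{t2} directly.

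\textbf{Main argument via Theorem \ref{t2}.} Let $u\in\mathcal{P}$ be a sign-changing critical point for $p_2$ at level $c(p_2)$. It solves $Lu=(p_2+c)u^{+}-cu^{-}$, so
$$
\|u^{+}\|^2=(p_2+c)|u^{+}|_{K,2}^2,\qquad \|u^{-}\|^2=c\,|u^{-}|_{K,2}^2.
$$
Now build the same kind of path as in the proof of Theorem \ref{t2}, but evaluate it with $\tilde{I}_{p_1}$. On the two-dimensional cone spanned by $u^{+}$ and $u^{-}$, we have
$$
\tilde{I}_{p_1}(su^{+}-tu^{-})=\frac{(p_2-p_1+c)\,s^2|u^{+}|^2+c\,t^2|u^{-}|^2}{s^2|u^{+}|^2+t^2|u^{-}|^2}.
$$
This exceeds $c$ whenever $s\ne0$, so this route does not immediately help.

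\textbf{Reversed roles.} Instead I would use the critical point $v$ for $p_1$ at level $c$, which solves $Lv=(p_1+c)v^{+}-cv^{-}$. On the span of $v^{+}$ and $v^{-}$,
$$
\tilde{I}_{p_2}(sv^{+}-tv^{-})=c-(p_2-p_1)\frac{s^2|v^{+}|_{K,2}^2}{s^2|v^{+}|_{K,2}^2+t^2|v^{-}|_{K,2}^2}\le c,
$$
with equality only when $s=0$. Following the path construction of Theorem \ref{t2}, using $\gamma_1,\gamma_3$ and the sublevel paths $\pm\gamma_2$ obtained from Lemmas \ref{le9} and \ref{le10}, we get a path in $\Gamma$ on which $\tilde{I}_{p_2}\le c$.

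The delicate point is where this path passes through $-v^{-}/|v^{-}|_{K,2}$, since there $\tilde{I}_{p_2}$ equals exactly $c$. To handle it, I would perturb the path near that point: it is not a critical point of $\tilde{I}_{p_2}$, because it vanishes on a set of positive measure and so is not $\pm\phi_1$. A local deformation, justified by the PS condition in Lemma \ref{le5}, pushes the maximum strictly below $c$. This contradicts $c(p_2)=c$, since $c(p_2)$ is the minimax over $\Gamma$.

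The second equality case, $p_1+c(p_1)=p_2+c(p_2)$, reduces to the first by symmetry. By Lemma \ref{le91}, $c(-p)+(-p)=c(p)$, so the map $p\mapsto -p$ exchanges the two coordinates of $\mathcal{C}$. Applying the strict inequality for $c$ at the points $-p_2<-p_1$ then gives strictness for $p+c(p)$.
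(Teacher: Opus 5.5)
Your strategy matches the paper's. The ``reversed roles'' step is Lemma \ref{le41} applied with $(a,b)=(p_1+c,c)$ and $(\alpha,\beta)=(p_2+c,c)$. You replace the paper's second use of Lemma \ref{le41} by the reflection $c(-p)=c(p)+p$, which is legitimate. You are also right that this path only gives $\tilde{I}_{p_2}\le c$, so a deformation at a non-critical point is needed. The paper's proof of Lemma \ref{le41} glosses over exactly this: it asserts strict inequality from (\ref{ee23}), which is false when $b=\beta$.

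However, the path you invoke is not actually produced by the construction of Theorem \ref{t2}. Since $\tilde{I}_{p_2}(v^-/|v^-|_{K,2})=\|v^-\|^2/|v^-|_{K,2}^2-p_2=c-p_2$ exactly, the point $v^-/|v^-|_{K,2}$ is not in the open sublevel set $O=\{\tilde{I}_{p_2}<c-p_2\}$. So Lemma \ref{le10} gives no $\gamma_2$ starting there; the proof of Theorem \ref{t2} makes the same slip. Without $\gamma_2$ there is no path to deform. The equality problem therefore also occurs at $+v^-/|v^-|_{K,2}$, and it must be handled before the sublevel argument. Since $v^-/|v^-|_{K,2}\ge 0$ vanishes on a set of positive measure, it is not a critical point of $\tilde{I}_{p_2}$ on $\mathcal{P}$. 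Hence a short curve along which $\tilde{I}_{p_2}$ strictly decreases leads into $O$, and you apply Lemmas \ref{le9} and \ref{le10} from its endpoint. The concatenated path then satisfies $\tilde{I}_{p_2}\le c$ with equality only at non-critical points, and your deformation concludes.

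Second, the construction of Theorem \ref{t2} requires $p_2\ge 0$. For $p_2<0$, $\gamma_1$ ends at $v^-/|v^-|_{K,2}$, whose level $c-p_2$ exceeds $c$, and the bound $\tilde{I}_{p}(-w)\le \tilde{I}_{p}(w)+p$ used for $-\gamma_2$ is lost. You cannot skip this range. Your reduction of $p_1+c(p_1)<p_2+c(p_2)$ uses strictness of $c$ at $-p_2<-p_1$, and these are both negative as soon as $p_1>0$. Sending that case back through Lemma \ref{le91} is circular.

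You can close this in either of two ways.
\begin{itemize}
\item Run the reflected construction when $p_2<0$. Go from $v$ to $-v^+/|v^+|_{K,2}$, whose level $p_1+c$ is strictly below $c+p_2$. Join it to $\pm\phi_1$ inside $\{\tilde{I}_{p_2}<c+p_2\}$, where the only critical points are $\pm\phi_1$. Then reflect, using $\tilde{I}_{p_2}(-w)\le\tilde{I}_{p_2}(w)-p_2$.
\item Prove the second inequality directly for $p_1\ge 0$ with your discarded ``first attempt''. If $p_1+c(p_1)=p_2+c(p_2)$, then $c(p_2)=c(p_1)-(p_2-p_1)<c(p_1)$. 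For a critical point $u$ of $\tilde{I}_{p_2}$ at level $c(p_2)$ one gets $\tilde{I}_{p_1}(su^+-tu^-)\le c(p_1)$, with equality only at $u^+/|u^+|_{K,2}$. Moreover $u^-/|u^-|_{K,2}$ lies strictly inside $\{\tilde{I}_{p_1}<c(p_1)-p_1\}$. This is the paper's first application of Lemma \ref{le41}, again finished by a deformation at the non-critical point $u^+/|u^+|_{K,2}$.
\end{itemize}
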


\begin{theorem}\label{t43}
$c(p) \rightarrow \lambda_1$ as $p \rightarrow \infty$.
\end{theorem}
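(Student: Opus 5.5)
Since $c(p)>\lambda_1$ for every $p$ (Lemma \ref{le7}, and Theorem \ref{t2} for $p<0$), and $c$ is decreasing by Theorem \ref{t42}, the limit $\lim_{p\to\infty}c(p)=:\ell\ge\lambda_1$ exists. We therefore only need to rule out $\ell>\lambda_1$. We argue by contradiction and assume $c(p)\ge\lambda_1+\delta$ for all $p$, with some $\delta>0$.

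The first approach is a direct construction, in the spirit of \cite{bur4}. For each $p$ we build a path $\gamma_p\in\Gamma$ with
\[
\max_{t}\tilde I_p(\gamma_p(t))\le\lambda_1+o(1)\quad\text{as }p\to\infty .
\]
The difficulty is that $\tilde I_p(u)=\|u\|^2-p|u^+|_{K,2}^2$, so any point on the path that has small positive part pays roughly $\|u\|^2$, which exceeds $\lambda_1$ unless $u$ is close to $-\phi_1$. The construction in \cite{bur4} uses functions whose positive part is concentrated on a small set, so as not to disturb $-\phi_1$ much. Concretely, fix $x_0$ and a cutoff $\psi_r\ge0$ supported in $B(x_0,r)$. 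Set $\phi_r=\phi_1(1-\eta_r)$, where $\eta_r$ is a cutoff equal to $1$ on $B(x_0,r)$ and supported in $B(x_0,2r)$. Then $\phi_r\to\phi_1$ in $X$ as $r\to0$ when $N\ge2$; in dimension $N=1$ a separate argument is needed. We then use the path
\[
-\phi_r\ \to\ -\phi_r+s\psi_r\ \to\ \text{(rotate in the plane of }\psi_r,\phi_1)\ \to\ \phi_1 .
\]
Along it, either the $L^2$ mass of the positive part is bounded below, so that $-p|u^+|^2$ dominates for large $p$, or $u$ is close to $-\phi_r$, where $\tilde I_p\le\|\phi_r\|^2+o(1)\to\lambda_1$.

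The main obstacle is the endpoint $-\phi_1$ and the stage where the positive part is small but nonzero. There $\tilde I_p\le\|u\|^2$, and by Lemma \ref{le6} $-\phi_1$ is a strict local minimum, so the barrier between the two basins has to be made small. To control the first segment I would bound
\[
\|-\phi_1+\text{perturbation}\|^2\le\lambda_1+C\|\phi_1-\phi_r\|+C s\|\psi_r\|.
\]
For this, $\|\psi_r\|$ must be kept controlled while $|\psi_r|_{K,2}$ stays fixed, and that is impossible as $r\to0$. So the order of limits matters: first fix $r$ small so that $\|\phi_r\|^2<\lambda_1+\varepsilon$. Then choose $p$ large so that on the part of the path where $|u^+|_{K,2}^2\ge\eta(r)$ we have $\tilde I_p<\lambda_1$. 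Finally, on the part where $|u^+|$ is small, which happens only for $s$ small, the disjointness of the supports of $\phi_r$ and $\psi_r$ gives
\[
\|u\|^2=\frac{\|\phi_r\|^2+s^2\|\psi_r\|^2}{|\phi_r|_{K,2}^2+s^2|\psi_r|_{K,2}^2},
\]
and this is close to $\lambda_1$ when $s$ is small.

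The remaining issue is the initial segment from $-\phi_1$ to $-\phi_r/|\phi_r|_{K,2}$. This is handled by the straight-line normalized path. Since $\phi_1$ and $\phi_r$ are both nonpositive along it after the sign change, the positive part vanishes, and $\tilde I_p=\|u\|^2$ stays within $o_r(1)$ of $\lambda_1$ by continuity in $X$. Concatenating the pieces gives $c(p)\le\lambda_1+2\varepsilon$ for $p$ large, which contradicts $\ell>\lambda_1$ because $\varepsilon$ is arbitrary.
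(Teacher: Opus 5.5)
Your explicit-path strategy works for $N\ge 3$, but it does not cover all $N\ge 1$. The gap is in the step everything rests on: $\phi_r=\phi_1(1-\eta_r)\to\phi_1$ in $X$.

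For $N=1$ you defer this, and no cutoff can make it work. On bounded intervals, convergence in $X$ implies $H^1$ convergence and hence uniform convergence, so functions vanishing at $x_0$ cannot approach $\phi_1$. Combine this with the simplicity of $\lambda_1$ and the compact embedding: a normalized minimizing sequence for $\|v\|^2/|v|_{K,2}^2$ converges in $X$ to $\pm\phi_1/|\phi_1|_{K,2}$. This gives $\inf\{\|v\|^2/|v|_{K,2}^2:\ v\in X\setminus\{0\},\ v(x_0)=0\}>\lambda_1$. Since $(-\phi_r)^+=0$, the point $-\phi_r/|\phi_r|_{K,2}$ on your path costs at least $\lambda_1+\kappa$, with $\kappa>0$ independent of $p$ and $r$, and the argument collapses.

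For $N=2$ the claim is false with the cutoff you specify. Any $\eta_r$ equal to $1$ on $B(x_0,r)$ and supported in $B(x_0,2r)$ has $\int|\nabla\eta_r|^2\,dx\ge 2\pi/\ln 2$, so $\|\phi_1\eta_r\|$ does not tend to $0$. There you would need a logarithmic cutoff.

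The fix is to cut the hole at infinity rather than at a point. Take $\phi_R=\phi_1\eta_R$ with $\eta_R=1$ on $B_R$, $\eta_R=0$ off $B_{2R}$ and $|\nabla\eta_R|\le C/R$. Take $0\le\psi\in C_c^\infty(\mathbb{R}^N\setminus\overline{B_{2R}})$ with $\psi\not\equiv 0$. Since $K\phi_1^2=e^{-|x|^2/4}$ and $K|\nabla\phi_1|^2=\frac{|x|^2}{4}e^{-|x|^2/4}$ are integrable, $\phi_R\to\phi_1$ in $X$ in every dimension, and the rest of your construction goes through. In fact, by disjointness of supports, $I_p(-\phi_R+s\psi)/|-\phi_R+s\psi|_{K,2}^2\le\|\phi_R\|^2/|\phi_R|_{K,2}^2$ for every $s\ge 0$ once $p\ge\|\psi\|^2/|\psi|_{K,2}^2$. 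So the splitting according to $|u^+|_{K,2}^2\ge\eta(r)$ is unnecessary.

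This is the mechanism behind the paper's proof, which is shorter and dimension-free. It picks one $\phi\in X$ for which no $r$ satisfies $\phi\le r\phi_1$ a.e. (Lemma~\ref{l42}; for $N=1$ this comes from the growth of $\phi/\phi_1$ at infinity). It then uses the single path $\gamma(t)=(t\phi_1+(1-|t|)\phi)/|t\phi_1+(1-|t|)\phi|_{K,2}$ and argues by compactness. If $\max\tilde I_p\circ\gamma\ge\lambda_1+\delta$ for all $p$, the maximizers $u_p$ satisfy $|u_p^+|_{K,2}\to 0$. This forces $t_p\to -1$, and hence $\lambda_1|\phi_1|_{K,2}^2=\|\phi_1\|^2\ge(\lambda_1+\delta)|\phi_1|_{K,2}^2$, a contradiction. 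Your version gives an explicit near-optimal path for each $p$, at the price of energy estimates along the path and a dimension-sensitive choice of cutoff.
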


\begin{proof} [Proof of Theorem \ref{t41}]
   Let $p_1, p_2 \in \mathbb{R}$ and $p_1<p_2$. It is easy to see that
   $c\left(p_1\right) \geqslant c\left(p_2\right)$ from $\tilde{I}_{p_1}(u) \geqslant \tilde{I}_{p_2}(u)$ for all $u \in \mathcal{P}$
   and the definition of $c(\cdot)$. Firstly, we consider the case where $p_1$ and $p_2$ have the same signs.
   Without loss of generality, suppose that $p_1, p_2 \in \mathbb{R}_{+}$. Using the definition
   of $c\left(p_2\right)$ again, for each $\varepsilon>0$, there exists $\gamma \in \Gamma$ such that
$$
\max _{t \in[-1,1]} \tilde{I}_{p_2}(\gamma(t)) \leqslant c\left(p_2\right)+\varepsilon / 2 .
$$
Assume that there is a $t_0 \in[-1,1]$ satisfies
$$
\tilde{I}_{p_1}\left(\gamma\left(t_0\right)\right)=\max _{t \in[-1,1]} \tilde{I}_{p_1}(\gamma(t)) .
$$
By combining the facts given above and noting that $\gamma\left(t_0\right) \in \mathcal{P}$, we have
$$
\begin{aligned}
0 & \leqslant c\left(p_1\right)-c\left(p_2\right) \\
& \leqslant \max _{t \in[-1,1]} \tilde{I}_{p_1}(\gamma(t))-\max _{t \in[-1,1]} \tilde{I}_{p_2}(\gamma(t))+\varepsilon / 2 \\
& \leqslant \tilde{I}_{p_1}\left(\gamma\left(t_0\right)\right)-\tilde{I}_{p_2}\left(\gamma\left(t_0\right)\right)+\varepsilon / 2 \\
& =\left(p_2-p_1\right)\left|\left[\gamma\left(t_0\right)\right]^{+}\right|_{K,2}^2+\varepsilon / 2 \\
& \leqslant\left(p_2-p_1\right)+\varepsilon / 2.
\end{aligned}
$$
Similarly, for $p_1, p_2 \in \mathbb{R}_{-}$, it still holds that $0 \leqslant c\left(p_1\right)-c\left(p_2\right) \leqslant\left(p_2-p_1\right)+\varepsilon / 2$. Second, assume that $p_1<0<p_2$. Then, from the results given above, we deduce that
$$
0 \leqslant c\left(p_1\right)-c\left(p_2\right)=c\left(p_1\right)-c(0)+c(0)-c\left(p_2\right) \leqslant-p_1+\varepsilon / 2+p_2+\varepsilon / 2=\left(p_2-p_1\right)+\varepsilon .
$$
Hence, for $p_1, p_2 \in \mathbb{R}$ with $p_1<p_2$, we have
$$
0 \leqslant c\left(p_1\right)-c\left(p_2\right) \leqslant\left(p_2-p_1\right)+\varepsilon,
$$
which shows that $c$ is Lipschitz continuous on $\mathbb{R}$ by the arbitrariness of $\varepsilon$.

\end{proof}
To prove Theorem \ref{t42}, we need the following lemma.
For the sake of simplicity, let $P=\mathbb{R}_{+}^2$. Then, $P$ is a cone in $\mathbb{R}^2$ and its interior $P^{\circ}=(0, \infty)^2$. For $x, y \in \mathbb{R}^2$, we define $x \leqslant y$ iff $y-x \in P$, and $x<y$ if $x \leqslant y$ with $x \neq y$. We also define $x \ll y$ iff $y-x \in P^{\circ}$.

\begin{lemma}\label{le41}
 Let $(\alpha, \beta) \in \mathcal{C}$. If $(a, b) \in \mathbb{R}^2$ satisfies $\left(\mu_1, \mu_1\right) \ll(a, b)<(\alpha, \beta)$, then the equation
\begin{equation}\label{ee22}
\left\{\begin{array}{l}
L u=-\Delta u-\frac{1}{2}(x \cdot \nabla u)=a u^{+}-b u^{-}, \\
u\in X,
\end{array}\right.
\end{equation}
only has a trivial solution.
\end{lemma}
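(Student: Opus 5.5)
Here $\mu_1$ is read as $\lambda_1$. The plan is to argue by contradiction, reducing everything to Theorem \ref{t2} and Corollary \ref{le77}.

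Suppose \eqref{ee22} has a nontrivial solution $u$, so $(a,b)\in\Sigma$. Since $a>\lambda_1$ and $b>\lambda_1$, the point $(a,b)$ is not on $\mathcal{L}$. By Corollary \ref{le77}(i), $u$ cannot be signed. Hence $u$ changes sign and $(a,b)\in\Sigma_0$.

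Set $p=a-b$, so that $(a,b)=(p+b,b)$. By Theorem \ref{t2}, $b\ge c(p)$. Write $(\alpha,\beta)=(q+c(q),c(q))$ with $q=\alpha-\beta$. Then $(a,b)<(\alpha,\beta)$ means $a\le \alpha$, $b\le\beta$ and $(a,b)\neq(\alpha,\beta)$. The aim is to show $b<c(p)$, which is the contradiction.

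\textbf{Case $p=q$.} The two points lie on the same line of slope one. The conditions $a\le\alpha$, $b\le\beta$ and $(a,b)\neq(\alpha,\beta)$ then force $b<\beta=c(p)$, which contradicts $b\ge c(p)$.

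\textbf{Case $p\neq q$.} Here I will use monotonicity of $c$. In the proof of Theorem \ref{t41} it was shown that $c$ is nonincreasing and that $p\mapsto p+c(p)$ is nondecreasing; the latter follows from $c(p_1)-c(p_2)\le p_2-p_1$.

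If $p>q$, then $c(p)\le c(q)$ and $p+c(p)\ge q+c(q)=\alpha\ge a=p+b$. This gives $b\le c(p)$, hence $b=c(p)$ and $p+c(p)=\alpha$. Then $a=\alpha$, and $b=c(p)=\alpha-p<\alpha-q=\beta$. So the point $(\alpha,c(p))$, with $c(p)<\beta$, lies on $\mathcal{C}$ directly below $(\alpha,\beta)$, and $q+c(q)=p+c(p)$ with $q<p$.

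If $p<q$, the symmetric argument gives $b=c(p)=\beta$ and $a=p+c(p)<\alpha$, so $c(p)=c(q)$ with $p<q$.

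\textbf{Main obstacle.} Both subcases of $p\neq q$ end in a flat piece of $\mathcal{C}$: either $c$ is constant on an interval, or $p\mapsto p+c(p)$ is constant on an interval. Excluding this is exactly the strict monotonicity of Theorem \ref{t42}, which this lemma is meant to help prove, so I cannot simply quote it.

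My first attempt at the flat case $c(p)=c(q)$ with $p<q$ would be a direct argument. Take a near-optimal path $\gamma$ for $c(q)$ and a point $w$ on it where $\tilde I_{p}(w)\approx c(q)$. Then
\[
\tilde I_p(w)-\tilde I_q(w)=(q-p)|w^+|_{K,2}^2 .
\]
So $|w^+|_{K,2}\to0$ along near-maximizers, and in the limit these give a critical point at level $c(q)$ with $w^+=0$. Such a point is signed, hence equal to $\pm\phi_1$ by Corollary \ref{le77}. But the level of $\pm\phi_1$ is at most $\lambda_1<c(q)$, a contradiction.

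To make the compactness step rigorous, I would invoke the (PS) condition of Lemma \ref{le5} together with a deformation argument. The other flat case reduces to this one through the symmetry $c(-p)=c(p)+p$ of Lemma \ref{le91}.
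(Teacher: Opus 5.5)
Your reduction is correct, and it takes a genuinely different route from the paper. The paper takes the putative solution $u_0$ and, by the same path surgery as in Theorem \ref{t2}, builds a path in $\Gamma$ on which $\tilde I_p<\beta$. You instead combine $b\ge c(a-b)$ from Theorem \ref{t2} with the weak monotonicity in the proof of Theorem \ref{t41}. This correctly shows that a nontrivial solution forces a flat piece of $c$ or of $p\mapsto p+c(p)$, and you are right not to quote Theorem \ref{t42}. Everything therefore rests on excluding $c(p)=c(q)$ for $p<q$, and that step is not established.

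First, the choice of path is wrong as stated. Suppose $\gamma$ is near-optimal for $\tilde I_q$ and $\tilde I_p(w)\approx c(q)$. Then the identity $(q-p)|w^+|_{K,2}^2=\tilde I_p(w)-\tilde I_q(w)$ gives nothing, because $\tilde I_q(w)$ may be well below $c(q)$. At a maximizer of $\tilde I_q$ on $\gamma$, the value of $\tilde I_p$ is not controlled either. You need paths $\gamma_n\in\Gamma$ with $\max_{\gamma_n}\tilde I_p\le c(p)+1/n$. Since $\tilde I_q\le\tilde I_p$, these are also minimizing for $c(q)=c(p)$. Then every $w\in\gamma_n$ with $\tilde I_q(w)\ge c(q)-\varepsilon$ satisfies $(q-p)|w^+|_{K,2}^2\le\varepsilon+1/n$.

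Second, the claim that ``in the limit these give a critical point'' does not follow from (PS). Points on minimizing paths are not Palais--Smale sequences.

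What is missing is the localized minimax principle: there exist $w_n\in\gamma_n$ with $\tilde I_q(w_n)\to c(q)$ and $\operatorname{dist}(w_n,K)\to0$, where $K$ is the set of critical points of $\tilde I_q$ on $\mathcal P$ at level $c(q)$. Proving this needs a quantitative deformation lemma for $\tilde I_q$ on the manifold $\mathcal P$, which the paper never provides. It would use Lemma \ref{le5}, together with the fact that the endpoints $\pm\phi_1$ sit at levels $\lambda_1$ and $\lambda_1-q$, strictly below $c(q)$, so the deformation fixes them.

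With that lemma the argument closes. $K$ is compact by (PS). Every element of $K$ changes sign, because $(q+c(q),c(q))\notin\mathcal L$ (Lemma \ref{le7}, and Lemma \ref{le91} when $q<0$). Hence $\min_K|w^+|_{K,2}>0$, which contradicts $|w_n^+|_{K,2}\to0$.

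Once completed, your route gives an independent proof of Theorem \ref{t42}. It also isolates the real content of the lemma, namely the boundary case $b=\beta$ (or $a=\alpha$). The paper's construction is not strict in that case either. Its path passes through $-u_0^-/|u_0^-|_{K,2}$, whose $\tilde I_p$-level is exactly $b$, so it too needs a local deformation at a non-critical point there.
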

\begin{proof}
  Since $\mathcal{C}$ is symmetric, we only need to consider the case  $\alpha \geqslant \beta$. Without loss of generality, assume that $a<\alpha$. By contradiction, if we assume that $u_0$ is a nontrivial solution of (\ref{ee22}), then $u_0$ changes sign in $\mathbb{R}^N$ by Corollary \ref{le77}. Let $p=\alpha-\beta \geqslant 0$. Then, $\beta=c(p)$, where $c(p)$ is given by (\ref{ee16}). We show that there exists a path $\gamma \in \Gamma$ such that
$$
\max _{t \in[-1,1]} \tilde{I}_p(\gamma(t))<\beta,
$$
which contradicts the definition of $c(p)$.

In order to construct $\gamma$, we first prove that
\begin{equation}\label{ee23}
 \frac{\left\|u_0^{+}\right\|^2}{\left|u_0^{+}\right|_{K,2}^2}<\alpha,
\frac{\left\|u_0^{-}\right\|^2}{\left|u_0^{-}\right|_{K,2}^2} \leqslant \beta.
\end{equation}
In fact, by (\ref{ee22}), we know that
$$
\left\|u_0^{+}\right\|^2=a\left|u_0^{+}\right|_{K,2}^2<\alpha\left|u_0^{+}\right|_{K,2}^2,
$$
and
$$
\left\|u_0^{-}\right\|^2=b\left|u_0^{-}\right|_{K,2}^2 \leqslant \beta\left|u_0^{-}\right|_{K,2}^2.
$$
Hence, (\ref{ee23}) holds.
 Then, similar to the proof of Theorem \ref{t2}, but by using $u_0$ instead of $u$, there are two paths $\gamma_1, \gamma_3$ in $\mathcal{P}$,
as given by (\ref{ee211}), (\ref{ee212}),
which go from $u_0 /\left|u_0\right|_{K,2}$ to $u_0^{-} /\left|u_0^{-}\right|_{K,2}$,
and from $-u_0^{-} /\left|u_0^{-}\right|_{K,2}$ to $u_0 /\left|u_0\right|_{K,2}$, respectively.
The energy is lower than $\beta$ on the paths given above according to (\ref{ee23}).
Denote $O=\left\{u \in \mathcal{P}: I_p(u)<\beta-p\right\}$.
Then, we know that the only possible critical points of $I_p$ in $O$ are $-\phi_1$ when $\lambda_1<\beta-p$, and $\phi_1$.
Indeed, let $v \in O$ be a critical point of $I_p$ on $\mathcal{P}$ and write $c=I_p(v)$.
Then, from Theorem \ref{t2}, it follows that $c \leqslant \lambda_1$. If $c<\lambda_1$, then $c+p=\lambda_1$ by Corollary \ref{le77} and Corollary \ref{le78}.
 Hence, we obtain $v=\phi_1$ from Corollary \ref{le77}.
 In the following, we assume that $c=\lambda_1$.
 According to $\left(\lambda_1+p, \lambda_1\right) \in \mathcal{L}$ and Corollary \ref{le77},
 we find that $v= \pm \phi_1$ when $p=0$, and $v=-\phi_1$ when $p>0$.
 By Lemma \ref{le9} and Lemma \ref{le10}, there is a path $\gamma_2$ in $O$  that goes from $-u_0^{-} /\left|u_0^{-}\right|_{K,2}^2$ to $\phi_1$ or $-\phi_1$.
 We assume that this point is $\phi_1$.
 Thus, similar to Theorem \ref{t2}, we obtain the desired path $\gamma \in \Gamma$ and the energy is lower than $\beta$ along $\gamma$.

\end{proof}

\begin{proof} [Proof of Theorem \ref{t42}]
Let $p_1<p_2$. By contradiction, suppose that either $p_1+c\left(p_1\right) \geqslant p_2+c\left(p_2\right)$ or $c\left(p_1\right)=c\left(p_2\right)$.
In the first case, we have

$$
p_1+c\left(p_1\right) \geqslant p_2+c\left(p_2\right)>p_1+c\left(p_2\right),
$$
which implies $c\left(p_1\right)>c\left(p_2\right)$. Let $(\alpha, \beta)=\left(p_1+c\left(p_1\right), c\left(p_1\right)\right)$
in Lemma \ref{le41}. Then, we see that
\begin{equation*}
\left\{\begin{array}{l}
L u=-\Delta u-\frac{1}{2}(x \cdot \nabla u)=(p_2+c(p_2)) u^{+}-c(p_2) u^{-}, \\
u\in X,
\end{array}\right.
\end{equation*}
only has a trivial solution, which contradicts $\left(p_2+c\left(p_2\right), c\left(p_2\right)\right) \in \Sigma$. In the other case, we know that $p_1+c\left(p_1\right)<p_2+c\left(p_2\right)$. By taking $(\alpha, \beta)=\left(p_2+c\left(p_2\right), c\left(p_2\right)\right)$ in Lemma \ref{le41},
 we see that
\begin{equation*}
\left\{\begin{array}{l}
L u=-\Delta u-\frac{1}{2}(x \cdot \nabla u)=(p_1+c(p_1)) u^{+}-c(p_1) u^{-}, \\
u\in X,
\end{array}\right.
\end{equation*}
only has a trivial solution, which contradicts $\left(p_1+c\left(p_1\right), c\left(p_1\right)\right) \in \Sigma$. The proof is complete
\end{proof}

\begin{lemma}\label{l42}
   There exists $\varphi \in X$ such that there does not exist $r \in \mathbb{R}$
   verifying $\phi(x) \leqslant r \phi_1(x)$ a.e. in $\mathbb{R}^N$.
\end{lemma}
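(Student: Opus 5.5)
We will produce an explicit element of $X$ that is not dominated by any multiple of $\phi_1(x)=e^{-|x|^2/4}$. The natural candidate is a function that decays like a Gaussian, but more slowly than $\phi_1$, for example
$$
\varphi(x)=e^{-a|x|^2}\quad\text{with } \tfrac18<a<\tfrac14 .
$$
The proof then splits into two steps: checking that $\varphi\in X$, and checking that no $r$ satisfies $\varphi\le r\phi_1$ a.e.

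For membership in $X$, we compute $\nabla\varphi=-2ax\,e^{-a|x|^2}$. This gives
$$
|\nabla\varphi|^2K(x)=4a^2|x|^2e^{(\frac14-2a)|x|^2},
$$
which is integrable on $\mathbb{R}^N$ precisely because $2a>\tfrac14$. Likewise $\varphi^2K=e^{(\frac14-2a)|x|^2}$ is integrable, so $\varphi\in L^2_K$.

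We still have to show that $\varphi$ lies in the completion of $C_c^\infty(\mathbb{R}^N)$, not merely that it has finite weighted energy. For this we approximate $\varphi$ by $\varphi\,\eta(\cdot/R)$, where $\eta$ is a smooth cutoff equal to $1$ on $B_1$ and supported in $B_2$. The error term in the gradient is
$$
\int |\nabla\varphi|^2|1-\eta(\cdot/R)|^2K+R^{-2}\int_{R<|x|<2R}\varphi^2|\nabla\eta|^2K .
$$
Both integrals tend to $0$ as $R\to\infty$ by dominated convergence, using the integrability just established. Each $\varphi\,\eta(\cdot/R)$ is smooth with compact support, so $\varphi\in X$. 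I expect this density step to be the only part needing care, but it is routine.

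For the non-domination, suppose $\varphi\le r\phi_1$ a.e. for some $r\in\mathbb{R}$. Then
$$
r\ge \frac{\varphi(x)}{\phi_1(x)}=e^{(\frac14-a)|x|^2}\quad\text{for a.e. }x .
$$
Since $a<\tfrac14$, the right-hand side tends to $\infty$ as $|x|\to\infty$, so it is unbounded on every exterior region $\{|x|>M\}$, which has positive measure. This is a contradiction, so no such $r$ exists.

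Concretely, taking $a=\tfrac{3}{16}$ makes both conditions hold. Note that any $r\le 0$ is excluded even more directly, because $\varphi>0$ everywhere.
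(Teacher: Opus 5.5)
Your proof is correct, but it takes a different route from the paper.

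\textbf{The paper's route.} The paper splits by dimension. For $N\ge 2$ it takes $\phi$ unbounded above near some point $x_1$. This is a local singularity, available because $K$ is bounded above and below on compact sets, so $X$ is locally $H^1$, and $H^1$ contains functions unbounded near a point when $N\ge 2$. Only for $N=1$ does it use the behaviour at infinity. There the weighted embedding makes $e^{|x|^2/8}\phi$ H\"older continuous, which rules out local singularities. The paper takes $\phi=(1+x^4)^{-1}e^{-x^2/8}$. Because the exponent $\frac18$ is exactly borderline, the polynomial factor is needed to make $|\phi'|^2K\sim x^{-6}/16$ integrable.

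\textbf{Your route.} You use a single, dimension-free construction at infinity, $e^{-a|x|^2}$ with $a\in(\frac18,\frac14)$. The strict inequality $a>\frac18$ gives integrability of $|\nabla\varphi|^2K$ with no correction factor. The inequality $a<\frac14$ gives $\varphi/\phi_1\to\infty$.

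\textbf{What each buys.} Your version is uniform in $N$ and more complete. You actually verify $\varphi\in X$, including the cutoff-density step, whereas the paper only asserts membership. Two small points in that step: your displayed sum bounds $\|\varphi-\varphi\eta(\cdot/R)\|^2$ only up to a factor $2$, and the $X$-limit is identified with $\varphi$ via the continuous embedding $X\hookrightarrow L^2_K$. Neither affects correctness. The paper's $N\ge 2$ choice gives a stronger, local statement: $\phi\le r\phi_1$ fails on every neighbourhood of $x_1$, mirroring the bounded-domain argument of Cuesta, de~Figueiredo and Gossez. That extra strength is not needed in Theorem~\ref{t43}. There one only needs that $t\phi_1+(1-|t|)\phi\le 0$ a.e.\ forces $|t|=1$, and the non-domination property (which also gives $\phi\notin\mathbb{R}\phi_1$, so the path is well defined) suffices.
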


\begin{proof}
   When $ N\geq 2$, it suffices to take for $\phi$ a function in $X$ which is unbounded from above in the neighborhood of
    some $x_1 \in \mathbb{R}^N$.

     For $N=1$, using the Sobolev embedding theorem in \cite{bur6}  we have  $\exp (|x|^2 / 8) \phi \in C^{0,1 / 2}(\mathbb{R})$.
    Let $$\phi=\frac{1}{1+x^4}\exp (-|x|^2 / 8).$$ According to Lemma \eqref{le0}, $\phi_1=\exp \left(-|x|^2 / 4\right)$. Then $\phi$ and $\phi_1$ satisfy the requirement.
\end{proof}

\begin{proof} [Proof of Theorem \ref{t43}]
Assume by contradiction that there exists $\delta>0$ such that
$$
\max _{t \in[-1,1]} \tilde{I}_p(\gamma(t)) \geqslant \lambda_1+\delta
$$
for all $\gamma \in \Gamma$ and all $p \in \mathbb{R}_{+}$. Moreover, there exists $\phi \in X$
 that satisfies the result of Lemma \ref{l42}. Consider a path defined by
$$
\gamma(t)=\frac{t \phi_1+(1-|t|) \phi}{\left|t \phi_1+(1-|t|) \phi\right|_{K,2}}, \quad t \in[-1,1].
$$
For each $p \in \mathbb{R}_{+}$, suppose that $\tilde{I}_p\left(\gamma\left(t_p\right)\right)=\max _{t \in[-1,1]} \tilde{I}_p(\gamma(t))$ for some $t_p \in[-1,1]$,
and let $u_p= t_p \phi_1+\left(1-\left|t_p\right|\right) \phi$. Then, we obtain
\begin{equation}\label{ee24}
 \left\|u_p\right\|^2-p\left|u_p^{+}\right|_{K,2}^2 \geqslant\left(\lambda_1+\delta\right)\left|u_p\right|_{K,2}^2
\end{equation}
for all $p \in \mathbb{R}_{+}$. If we let $p \rightarrow \infty$ in (\ref{ee24}), then we can assume that there exist a sequence $\left\{p_n\right\}$
with $p_n \rightarrow \infty$ and $t_{p_n} \rightarrow t_{\infty} \in[-1,1]$. Since $\left\{u_{p_n}\right\}$ is bounded in $X$, from (\ref{ee24}),
it follows that $\left|u_{p_n}^{+}\right|_{K,2}^2 \rightarrow 0$. Hence,
$$
\int_{\mathbb{R}_{+}}\left(\left[t_{\infty} \phi_1+\left(1-\left|t_{\infty}\right|\right) \phi\right]^{+}\right)^2K(x)dx=0
$$
which means that $t_{\infty} \phi_1(x)+\left(1-\left|t_{\infty}\right|\right) \phi(x) \leqslant 0$, a.e. $x \in \mathbb{R}_{+}$.
Based on the choice of $\phi$, we find that $\left|t_{\infty}\right|=1$, and thus $t_{\infty}=-1$ by the fact that
$\phi_1>0$, i.e., $t_{p_n} \rightarrow-1$ and $u_{p_n} \rightarrow-\phi_1$.
Thus, from (\ref{ee24}) again, it follows that
$$
\lambda_1\left|\phi_1\right|_{K,2}^2=\left\|\phi_1\right\|^2 \geqslant\left(\lambda_1+\delta\right)\left|\phi_1\right|_{K,2}^2
$$
which is a contradiction.

\end{proof}

\section{Application}
In this section we study the solvability of   problem \eqref{e51}.
Let $F(x, t)=\int_0^t f(x, s) \mathrm{d} s$. We impose the following conditions on the nonlinearity $f(x, u)$ :
\begin{description}
  \item[($f_1$)] $f \in C\left(\mathbb{R}^N \times \mathbb{R}, \mathbb{R}\right)$ and  there exists $p\in (2,2^{\ast})$ such that
  $$|f(x, t)| \leq C\left(1+|t|^{p-1}\right)$$ for $(x, t) \in \mathbb{R}^N \times \mathbb{R}$,
 where  $2^{\ast}= 2N/(N -2)$ if $N > 2$, and $2^{\ast}=\infty $ if $N \leq 2$.
  \item[($f_2$)] $f(x, t) \geqslant 0$ for all
$x \in \mathbb{R}^N, t>0$, and $f(x, t)=0$ for all $x \in \mathbb{R}^N, t\leq 0$.
  \item[($f_3$)] There exist $f_0, f_{\infty} \in (\lambda_1, \infty)$ such that
$$\lim _{t \rightarrow 0^{+}} \frac{f(x, t)}{t}=f_0$$ and $$\lim _{t \rightarrow \infty} \frac{f(x, t)}{t} =f_{\infty}$$ uniformly for $x \in \mathbb{R}^N$.
\end{description}

Now, we have the following theorem.

\begin{theorem}\label{t51}
 Assume that $\left(f_{1}\right)$,$\left(f_2\right)$ and $\left(f_3\right)$ are satisfied, then the problem (\ref{e51}) has at least two positive solutions in $X$.
\end{theorem}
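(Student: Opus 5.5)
The plan is to work with the energy functional
$$
J(u)=\frac12\|u\|^2-\int_{\mathbb{R}^N}F(x,u)K(x)\,dx,\qquad u\in X,
$$
which is $C^1$ on $X$ by $(f_1)$ and Lemma \ref{le01}. Its critical points are the weak solutions of (\ref{e51}).

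\emph{Step 1: solutions are positive.} By $(f_2)$ we have $f(x,t)=0$ for $t\le 0$. So testing the equation with $u^-$ gives $\|u^-\|^2=0$, and every nontrivial critical point satisfies $u\ge 0$. A strong maximum principle for $L=-\frac1K\nabla\cdot(K\nabla\,\cdot)$, which is uniformly elliptic on compact sets, then upgrades this to $u>0$. It therefore suffices to produce two nontrivial critical points of $J$.

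\emph{Step 2: nonresonance at $0$ and at $\infty$.} By $(f_3)$ and $(f_2)$, near $t=0$ the nonlinearity behaves like $f_0t^+-0\cdot t^-$, and near $|t|=\infty$ it behaves like $f_\infty t^+-0\cdot t^-$. I expect $(f_0,0)\notin\Sigma$ and $(f_\infty,0)\notin\Sigma$, for the following reason:
\begin{itemize}
\item $\Sigma=\mathcal{L}\cup\Sigma_0$;
\item $\Sigma_0$ lies in $\{\alpha>\lambda_1,\beta>\lambda_1\}$ by Corollary \ref{le78};
\item $0\neq\lambda_1=N/2$ and $f_0,f_\infty>\lambda_1$.
\end{itemize}
From $(f_\infty,0)\notin\Sigma$, the standard blow-up argument gives the (PS) condition. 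Take a (PS) sequence $u_n$ with $\|u_n\|\to\infty$ and normalize $w_n=u_n/\|u_n\|$. Using the compact embedding of Lemma \ref{le01} and the subcritical growth $(f_1)$, the limit $w$ solves $Lw=f_\infty w^+-0\cdot w^-$ and $w\ne0$ (as in the proof of Lemma \ref{le8}), which is a contradiction. Hence the sequence is bounded, and then compact as in Lemma \ref{le5}.

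By the same homotopy/blow-up argument, the critical groups reduce to those of the limit functionals:
$$
C_q(J,0)=C_q(J_{f_0},0),\qquad C_q(J,\infty)=C_q(J_{f_\infty},0),\qquad\text{where } J_a(u)=\tfrac12\|u\|^2-\tfrac a2|u^+|_{K,2}^2 .
$$
Both homotopies are admissible because no point $(a,0)$ with $a>\lambda_1$ lies in $\Sigma$.

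\emph{Step 3: critical groups of $J_a$ for $a>\lambda_1$.} Since $J_a$ is $2$-homogeneous, $C_q(J_a,0)$ is determined by the sublevel set $\{v\in\mathcal{P}:\tilde I_a(v)<0\}$ on the sphere. By Lemmas \ref{le2}, \ref{le3} and Theorem \ref{t2}, the critical values of $\tilde I_a$ are
\begin{itemize}
\item $\lambda_1-a<0$, attained only at $\phi_1$;
\item values $\ge\lambda_1>0$, namely at $-\phi_1$ and on $\mathcal{C}$.
\end{itemize}
By Lemma \ref{le10} and the deformation lemma (PS holds by Lemma \ref{le5}), this sublevel set deforms onto a neighbourhood of $\phi_1$ and is contractible. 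I therefore expect
$$
C_q(J,0)=C_q(J,\infty)=0\quad\text{for all } q .
$$

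\emph{Step 4: two critical points.} The Morse relation, with all critical groups at $0$ and at $\infty$ trivial, gives
$$
\sum_{u\ne0}\sum_q(-1)^q\operatorname{rank}C_q(J,u)=0 .
$$
So a single nontrivial critical point $u_1$ with $\sum_q(-1)^q\operatorname{rank}C_q(J,u_1)\ne 0$ forces a second nontrivial critical point $u_2$. To get $u_1$, I would try a mountain-pass/linking geometry adapted to the cone structure. Take $e=R\phi_1$ with $R$ large, so $J(e)<0$ because $f_\infty>\lambda_1$. On the other side, $J(-t\phi_1)=\frac{t^2}{2}\|\phi_1\|^2\to+\infty$. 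I would run the minimax over paths joining a small multiple $\varepsilon\phi_1$ (where $J<0$ since $f_0>\lambda_1$) to $R\phi_1$ while passing through $\{u\le0\}$, or over pairs/cones in $\{J<0\}$ that cannot be deformed into each other. The aim is to show that the relevant sublevel at small energy and the one at very negative energy differ homologically. The resulting critical point would then have $C_1(J,u_1)\neq0$, after which the Morse relation finishes the proof.

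I expect this last step to be the main obstacle. Because $C_*(J,0)$ and $C_*(J,\infty)$ are both trivial, neither a local minimum nor an obvious mountain pass is available near $0$. The first solution must instead come from exploiting the fact that $\{J<0\}$ splits into pieces near $\phi_1$ at the two scales, which are connected only through regions where $J$ is large, as controlled by the Fu\v{c}ik levels $c(p)$ above $\lambda_1$. I would try to make this precise using the scaling of $J_{f_0}$ and $J_{f_\infty}$ together with Lemma \ref{le10}.
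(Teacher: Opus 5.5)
\textbf{Verdict: there is a genuine gap, and it cannot be closed because the statement is false as written.}

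Your Steps 1--3 are essentially sound. The gap is Step 4: you never produce the nontrivial critical point $u_1$, and under $(f_1)$--$(f_3)$ none need exist.

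\emph{Counterexample.} Take $f(x,t)=f_0t^+$ with a constant $f_0>\lambda_1$. Then $f_\infty=f_0$ and all three hypotheses hold. By your Step 1, every solution satisfies $u\ge0$. Test the equation with $\phi_1$, using $\int_{\mathbb{R}^N}\nabla u\cdot\nabla\phi_1\,K\,dx=\lambda_1\int_{\mathbb{R}^N}u\phi_1K\,dx$. This gives $(f_0-\lambda_1)\int_{\mathbb{R}^N}u\phi_1K\,dx=0$, hence $u\equiv0$, since $\phi_1K>0$. The same test excludes nontrivial solutions whenever $f(x,t)\ge\mu t$ for all $t>0$ with some $\mu>\lambda_1$.

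\emph{Your own data agree with this.} Here $J=J_{f_0}$, its only critical point is $0$, and $C_*(J,0)=C_*(J,\infty)=0$, so the Morse relation reduces to $0=0$. The geometric picture behind your linking attempt is also wrong. We have $J(t\phi_1)=\frac{t^2}{2}(\lambda_1-f_0)|\phi_1|_{K,2}^2<0$ for every $t>0$. So the small-scale and large-scale parts of $\{J<0\}$ are connected along the ray $\mathbb{R}_+\phi_1$; they are not separated by the Fu\v{c}ik levels $c(p)$. Your minimax over paths from $\varepsilon\phi_1$ to $R\phi_1$ through $\{u\le0\}$ has value exactly $0$. Indeed $J=\frac12\|u\|^2\ge0$ on $\{u\le0\}$, while the path running along the ray through the origin attains $0$. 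That value is realized by the trivial critical point.

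\emph{The paper's route fails at the same point.} It takes $u_1$ to be a minimizer of $I$ on $\bar B_\rho$ with $I(u_1)<0$, using $I(t\phi_1)<0$ for small $t>0$. It takes $u_2$ to be a mountain-pass point between $0$ and $e=\tilde t\phi_1$, with compactness from Lemma~\ref{le53} and positivity as in your Step 1. Both critical points rest on Lemma~\ref{ll51}(i), namely $I\ge\alpha>0$ on some sphere $\|u\|=\rho$. This is exactly what fails when $f_0>\lambda_1$: in the example $I<0$ on the whole open ray $\mathbb{R}_+\phi_1$, which meets every sphere. The proof of that lemma rewrites $\frac12\|u\|^2-\frac a2\|u\|^2$ as $\frac12\|u\|(\|u\|-a)$. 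With the correct factorization one needs $a<1$, whereas $a=(f_0+\varepsilon)S_{K,2}^2\ge(f_0+\varepsilon)/\lambda_1>1$.

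So neither route can be completed. What is missing is an additional hypothesis guaranteeing $\inf_{\|u\|=\rho}I>0$ for some $\rho>0$. Under such a hypothesis, the paper's minimization-plus-mountain-pass scheme, combined with your Step 1, does give two distinct positive solutions, since $I(u_1)<0<I(u_2)$.
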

By using the above result, we can prove that the functional $I(u):  X \rightarrow \mathbb{R}$
given by
$$
I(u)=\frac{1}{2} \int_{\mathbb{R}^N} K(x)|\nabla u|^2 d x-\int_{\mathbb{R}^N} K(x) F(x, u) d x
$$
is well defined. Standard calculations  imply that $I \in C^1(X, \mathbb{R})$ and the derivative of $I$ at the point $u$ is given by
$$
\left\langle I^{\prime}(u), v\right\rangle=\int_{\mathbb{R}^N} K(x) \nabla u \cdot \nabla v d x-\int_{\mathbb{R}^N} K(x) f(x, u) v d x
$$
for any $v \in X$.  Hence, the critical points of $I$ are precisely the weak solutions of problem \eqref{e51}.

The following preliminary results are used in the proof of Theorem \ref{t51}. Firstly, we prove that the functional
$I$ satisfies the mountain pass geometry.

\begin{lemma}\label{ll51}
Suppose that ($f_1$), ($f_2$) and ($f_3$) hold. Then  there exist $\rho>0$ and $\alpha>0$ such that

(i) $I(u) \geqslant \alpha$ for all $\|u\|=\rho$;

(ii) there exists $e \in X$ such that $\|e\|>\rho$ and $I(e)<0$.
\end{lemma}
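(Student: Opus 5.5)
The plan is to prove the two parts separately: (i) by a local expansion of $F$ near $t=0$ combined with the embeddings of Lemma \ref{le01}, and (ii) by testing $I$ along the ray $s\phi_1$ and using the behaviour of $f$ at infinity from $(f_3)$.

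\textbf{Part (i).} First I would derive a pointwise bound on $F$. By $(f_2)$, $F(x,t)=0$ for $t\le 0$. By $(f_3)$ and $(f_1)$, for every $\varepsilon>0$ there is $C_\varepsilon>0$ such that
\[
F(x,t)\le \tfrac{f_0+\varepsilon}{2}(t^+)^2+C_\varepsilon |t|^{p} \qquad\text{for all } (x,t)\in\mathbb{R}^N\times\mathbb{R}.
\]
Next I would integrate against $K$. The variational characterization $\|u\|^2\ge \lambda_1|u|_{K,2}^2$ is already used in Lemma \ref{le2}. Together with $|u|_{K,p}\le S_{K,p}\|u\|$ from Lemma \ref{le01}, this gives
\[
I(u)\ge \tfrac12\Bigl(1-\tfrac{f_0+\varepsilon}{\lambda_1}\Bigr)\|u\|^2-C_\varepsilon S_{K,p}^p\|u\|^p .
\]
Since $p>2$, choosing $\rho$ small then yields $I\ge\alpha>0$ on the sphere $\|u\|=\rho$.

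This is exactly the step I expect to be the main obstacle. The coefficient $1-(f_0+\varepsilon)/\lambda_1$ is positive only if $f_0<\lambda_1$. Under $(f_3)$ as written ($f_0>\lambda_1$), (i) actually fails. Indeed, for small $s>0$ one has $I(s\phi_1)=\tfrac{s^2}{2}(\lambda_1-f_0)|\phi_1|_{K,2}^2+o(s^2)<0$. So in carrying out the proof I would first check whether the intended hypothesis is $f_0<\lambda_1<f_\infty$, or whether the quadratic part at $0$ is meant to be compared with the Fu\v{c}ik curve $\mathcal{C}$ rather than with $\lambda_1$. In the latter case I would replace $\lambda_1$ by the relevant Fu\v{c}ik value $\min\{\beta:(p+\beta,\beta)\in\Sigma_0\}$ supplied by Theorem \ref{t2}.

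\textbf{Part (ii).} I would take $e=s\phi_1$ with $s\to\infty$. A crude lower bound of the form $F\ge \tfrac{f_\infty-\varepsilon}{2}t^2-C$ is useless here: $\int K=\infty$, and even $K\phi_1\equiv1$ is not integrable, so no additive constant can be absorbed. Instead I would argue via Fatou's lemma. By $(f_2)$, $F\ge0$. By $(f_3)$,
\[
\frac{F(x,s\phi_1(x))}{s^2}\to \tfrac{f_\infty}{2}\phi_1(x)^2 \qquad\text{pointwise as } s\to\infty,
\]
since $\phi_1>0$ everywhere. Hence
\[
\liminf_{s\to\infty}\frac{I(s\phi_1)}{s^2}\le \tfrac12\lambda_1|\phi_1|_{K,2}^2-\tfrac{f_\infty}{2}|\phi_1|_{K,2}^2<0,
\]
using $f_\infty>\lambda_1$. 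Taking $s$ large then gives $\|e\|>\rho$ and $I(e)<0$.
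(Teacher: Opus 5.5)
You are right that part (i) cannot be proved under $(f_3)$. In fact the lemma is false as stated, not merely out of reach of the small-$\rho$ argument. Your computation $I(s\phi_1)<0$ for small $s>0$ only rules out small radii. To refute ``there exist $\rho,\alpha>0$'', take $f(x,t)=\mu t^{+}$ with $\mu>\lambda_1$. It satisfies $(f_1)$--$(f_3)$ with $f_0=f_\infty=\mu$, the functional $I(u)=\frac12\|u\|^2-\frac{\mu}{2}|u^+|_{K,2}^2$ is $2$-homogeneous, and
\[
I\Bigl(\rho\,\frac{\phi_1}{\|\phi_1\|}\Bigr)=\frac{\rho^2}{2\|\phi_1\|^2}\,(\lambda_1-\mu)\,|\phi_1|_{K,2}^2<0\qquad\text{for every }\rho>0,
\]
so no sphere works. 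This example also has no positive solution at all, since testing $Lu=\mu u$ against $\phi_1>0$ forces $\mu=\lambda_1$; so Theorem~\ref{t51} fails too.

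The paper's proof of (i) hides the problem behind an algebra slip. It rewrites $\frac12\|u\|^2-\frac12 a\|u\|^2-C_1S_{K,p}^p\|u\|^p$ as $\frac12\|u\|\bigl(\|u\|-a-b\|u\|^{p-1}\bigr)$, which turns $a\|u\|^2$ into $a\|u\|$. It then still needs the smallness condition $a^{p-2}b<(p-2)^{p-2}/(p-1)^{p-1}$, which the hypotheses do not supply. Correctly factored, the bound is $\frac12\|u\|^2\bigl(1-a-b\|u\|^{p-2}\bigr)$, which needs $a=(f_0+\varepsilon)S_{K,2}^2<1$. With the optimal constant $S_{K,2}^2=1/\lambda_1$, this is exactly your condition $f_0<\lambda_1$. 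The paper's own proof of Theorem~\ref{t51} even computes $I(t\phi_1)<0$ for all small $t>0$.

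Your second suggested repair, comparing with a Fu\v{c}ik value from Theorem~\ref{t2} instead of $\lambda_1$, is a dead end. Since $F\equiv 0$ for $t\le 0$, the quadratic part of $I$ at $0$ is $\frac12\bigl(\|u\|^2-f_0|u^+|_{K,2}^2\bigr)$, which corresponds to the point $(f_0,0)$. The nonnegative test function $\phi_1$ makes this form negative as soon as $f_0>\lambda_1$. The relevant threshold is therefore the line $\{\lambda_1\}\times\mathbb{R}$, not the curve $\mathcal{C}$, which concerns sign-changing functions and lies in $\{\beta>\lambda_1\}$. The sensible correction is a hypothesis of the type $f_0<\lambda_1<f_\infty$, and under it your argument for (i) is a complete proof.

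Your part (ii) is correct and uses only $F\ge 0$ and $f_\infty>\lambda_1$. The paper does the same by explicit truncation. It takes a ball $B_{R_1}$ carrying the fraction $(\lambda_1+\frac12\delta_1)/(\lambda_1+\delta_1)$ of $|\phi_1|_{K,2}^2$ and discards $B_{R_1}^c$ using $F\ge0$. This yields $I(t\phi_1)\le-\frac{t^2}{4}\delta_1|\phi_1|_{K,2}^2$ for all $t\ge t_1$. Your Fatou argument is the soft version of this. It actually gives $\limsup_{s\to\infty}I(s\phi_1)/s^2\le\frac12(\lambda_1-f_\infty)|\phi_1|_{K,2}^2$, which is stronger than the $\liminf$ you wrote, though either suffices.
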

\begin{proof}
(i) For all $u\in X$, according to ($f_1$) and ($f_3$),  there exist $\varepsilon$ and $C_1>0$ such that
$$
|F(t)| \leqslant \frac{(f_0+\varepsilon)}{2} t^2+C_1|t|^p, \quad \text{\ for\ all\ } t \in \mathbb{R}.
$$
Hence, using Lemma \ref{le01}, we have
$$
\begin{aligned}
I(u) & \geqslant \frac{1}{2}\|u\|^2-\frac{1}{2}(f_0+\varepsilon) \int_{\mathbb{R}^N}  u^2 K(x) dx-C_1\int_{\mathbb{R}^N}  u^p K(x)dx \\
& \geqslant \frac{1}{2}\|u\|^2-\frac{1}{2}(f_0+\varepsilon) S^2_2\|u\|^2-C_1 S_{K,p}^p\|u\|^p \\
& =\frac{1}{2}\|u\|\left(\|u\|-a-b\|u\|^{p-1}\right),
\end{aligned}
$$
where $a= (f_0+\varepsilon) S^2_2$ and $b=2 C_1 S_{K,p}^p$.

Set $h: \mathbb{R}_{+} \rightarrow \mathbb{R}$ by $h(t)=t-a-b t^{p-1}$ for all $t \in \mathbb{R}_{+}$. Then, we get
$$
\max _{t \in \mathbb{R}_{+}} h(t)=h\left(t_0\right)=\left[\frac{1}{b(p-1)}\right]^{1 /(p-2)} \frac{p-2}{p-1}-a,
$$
where $t_0=[b(p-1)]^{-1 /(p-2)}>0$, which implies that $h\left(t_0\right)>0$ if and only if $a^{p-2} b<(p-2)^{p-2} /(p-1)^{p-1}$.
By taking $\rho=t_0$, we have that
$$I(u) \geqslant 2^{-1} t_0 h\left(t_0\right):=\alpha>0$$ for all $u \in X$ and $\|u\|=\rho$.

(ii) Using ($f_3$), there exist $\delta_1, M_1>0$ such that
\begin{equation}\label{e52}
2 F(x, t) \geq\left(\lambda_1+\delta_1\right) t^2, \quad \forall x \in \mathbb{R}^N, \forall t \geq M_1.
\end{equation}
Take $R_1>0$ large enough such that
\begin{equation}\label{e53}
\int_{ B_{R_1}}|\phi_1|^2 K(x)dx\geq \frac{\lambda_1+\frac{1}{2} \delta_1}{\lambda_1+\delta_1}\left|\phi_1\right|_{K,2}^2.
\end{equation}
Since $\phi_1(x)>0$ in $\mathbb{R}^N$, there exists $t_1>0$ such that
$$
t_1 \phi_1(x)>M_1, \quad \text{\ for\ all\ }|x| \leq R_1 .
$$
Then, by (\ref{e52}), (\ref{e53}) and Lemma \ref{le0}, we can see that, for $t \geq t_1$,
$$
\begin{aligned}
I\left(t \phi_1\right) & =\frac{t^2}{2} \int_{\mathbb{R}^N}  |\nabla \phi_1|^2 K(x) d x-\int_{\mathbb{R}^N}  F\left(x, t \phi_1\right) K(x) d x \\
& =\frac{t^2}{2} \lambda_1\left|\phi_1\right|_{K,2}^2-\int_{B_{R_1}}  F\left(x, t \phi_1\right) K(x) d x-\int_{B_{R_1}^c}  F\left(x, t \phi_1\right) K(x) d x \\
& \leq \frac{t^2}{2} \lambda_1\left|\phi_1\right|_{K,2}^2-\frac{t^2}{2}\left(\lambda_1+\delta_1\right) \int_{B_{R_1}} \phi_1^2 K(x)d x \\
& \leq-\frac{t^2}{4} \delta_1\left|\phi_1\right|_{K,2}^2.
\end{aligned}
$$
Take $e=\tilde{t} \phi_1$ with $\tilde{t}>t_1$. It is easily seen that $I(e)<0$.
\end{proof}

We recall that a sequence $\left\{u_n\right\} \subset X$ is said to be a $(C)_c$ sequence if $I\left(u_n\right) \rightarrow c$ and $\left(1+\left\|u_n\right\|\right) I^{\prime}\left(u_n\right) \rightarrow 0$.
The functional $I$ is said to satisfy the $(C)_{c}$ condition if any $(C)_c$ sequence of $I$ has a convergent subsequence.
If we replace the PS condition in the mountain pass theorem with the $(C)_c$ condition, the mountain pass theorem remains valid.
Hence, to obtain critical points of the functional $I$, we need to show that $I$ satisfies the $(C)_c$ condition.
\begin{lemma}\label{le53}
Suppose that  ($f_2$) and ($f_3$) hold. Then the functional $I$ satisfies the $(C)_c$ condition for any $c \in \mathbb{R}$.
\end{lemma}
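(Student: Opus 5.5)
We show that every $(C)_c$ sequence $\{u_n\}$ is bounded in $X$, and then use the compact embedding of Lemma \ref{le01} to get strong convergence. The asymptotic linearity in $(f_3)$ lets us compare $f(x,u)$ with $f_\infty u^+$, so the Fu\v{c}ik spectrum, specifically Corollary \ref{le77}, excludes unbounded sequences.

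\textbf{Step 1: boundedness, argued by contradiction.} Let $\{u_n\}$ satisfy $I(u_n)\to c$ and $(1+\|u_n\|)I'(u_n)\to 0$, and suppose $\|u_n\|\to\infty$. Put $w_n=u_n/\|u_n\|$. Up to a subsequence, $w_n\rightharpoonup w$ in $X$, $w_n\to w$ in $L^2_K$, and $w_n\to w$ a.e. Testing $I'(u_n)$ against $v\in X$ and dividing by $\|u_n\|$ gives
\[
\int_{\mathbb{R}^N}\nabla w_n\cdot\nabla v\,K\,dx-\int_{\mathbb{R}^N}\frac{f(x,u_n)}{\|u_n\|}\,v\,K\,dx\to 0 .
\]
By $(f_2)$, $(f_3)$ and continuity, $0\le f(x,t)\le C|t|$ for all $t$. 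Hence $f(x,u_n)/\|u_n\|$ is bounded in $L^2_K$. On the set $\{w>0\}$ we have $u_n\to+\infty$, so the quotient tends to $f_\infty w^+$. On the set $\{w\le 0\}$ the bound $|f(x,u_n)|/\|u_n\|\le C|w_n|$ together with $f=0$ for $t\le0$ shows the weak limit is $0$ there.

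\textbf{Step 2: identify the limit equation.} Since $L^2_K$ is a weighted $L^2$ space, a Vitali or dominated-convergence argument (as in Lemma \ref{le76}, using \cite[Lemma A.1]{bur7}) gives $f(x,u_n)/\|u_n\|\to f_\infty w^+$ in $L^2_K$. Consequently $w$ is a weak solution of $Lw=f_\infty w^+$, that is, of (\ref{ee1}) with $(\alpha,\beta)=(f_\infty,0)$.

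\textbf{Step 3: show $w\neq0$.} Test $I'(u_n)$ with $u_n$ and divide by $\|u_n\|^2$:
\[
1=\int_{\mathbb{R}^N}\frac{f(x,u_n)u_n}{\|u_n\|^2}K\,dx+o(1)\to f_\infty|w^+|_{K,2}^2 .
\]
So $w\ne0$, and therefore $(f_\infty,0)\in\Sigma$. Since $f_\infty\neq\lambda_1$ and $0\ne\lambda_1$ (because $\lambda_1=N/2>0$), the point $(f_\infty,0)$ does not lie on $\mathcal{L}$. By Corollary \ref{le77}(i), $w$ cannot be signed, so it must change sign, and Corollary \ref{le78} then forces $0=\beta>\lambda_1$, which is absurd. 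For a more direct route, test the limit equation with $w^-$ to get $\|w^-\|^2=0$, so $w\ge0$. Then testing with $\phi_1$ gives $(\lambda_1-f_\infty)\int w\phi_1K=0$, and since $w\ge0$ is nontrivial, $f_\infty=\lambda_1$, contradicting $f_\infty>\lambda_1$. Either way $\{u_n\}$ is bounded.

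\textbf{Step 4: strong convergence.} With $\{u_n\}$ bounded, pass to $u_n\rightharpoonup u$ in $X$ and $u_n\to u$ in $L^2_K$ and $L^p_K$. Expand
\[
\|u_n-u\|^2=\langle I'(u_n)-I'(u),u_n-u\rangle+\int_{\mathbb{R}^N}\bigl(f(x,u_n)-f(x,u)\bigr)(u_n-u)K\,dx .
\]
The first term tends to $0$. For the second, the linear bound $|f(x,t)|\le C|t|$ gives $|f(x,u_n)-f(x,u)|\le C(|u_n|+|u|)$, and Hölder in $L^2_K$ shows it tends to $0$. This is the same scheme as in Lemma \ref{le5}.

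\textbf{Main obstacle.} The delicate point is Step 2: justifying the pointwise and $L^2_K$ limit of $f(x,u_n)/\|u_n\|$ on the set where $w=0$, in the unbounded domain with weight $K$. I would handle it by splitting $f(x,t)=f_\infty t^+ + g(x,t)$, where $|g(x,t)|\le \varepsilon|t|+C_\varepsilon$ by $(f_3)$. The constant term contributes $C_\varepsilon\int_{\mathbb{R}^N}|v|K\,dx/\|u_n\|$, and the integral is finite for $v\in X$ because of the rapid decay; this is the place where that decay is genuinely used.
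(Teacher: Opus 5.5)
Your proof is correct in its main line, but it follows a different route from the paper's.

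\textbf{How the paper argues.} The paper never identifies the limit of $f(x,u_n)/\|u_n\|$. It first shows $w_0\not\equiv 0$: if $w_n\to 0$ in $L^2_K$, the bound $0\le f(x,t)/t\le C_1$ gives $1=o(1)$. It then gets $w_0\ge 0$ by testing with $w_n^-$. Finally it tests with $\phi_1$ and applies Fatou's lemma, using only the one-sided estimate $\liminf f(x,u_n)/\|u_n\|\ge(\lambda_1+\delta_1)w_0$ on $\{w_0>0\}$. This yields $\lambda_1\ge\lambda_1+\delta_1$.

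\textbf{How you argue.} You use the full limit $f_\infty$ and dominated convergence to get $f(x,u_n)/\|u_n\|\to f_\infty w^+$ in $L^2_K$. You then pass to the limit in the equation and obtain a nontrivial solution of $Lw=f_\infty w^+$, i.e.\ $(f_\infty,0)\in\Sigma$, which you rule out.

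\textbf{What each buys.} The paper's version is more economical: besides the linear upper bound, it needs only $\liminf_{t\to+\infty}f(x,t)/t>\lambda_1$, and it exploits positivity through $\phi_1$. Yours is the standard nonresonance mechanism and fits the theme of the paper better. It would go through unchanged for nonlinearities with slope $a$ at $+\infty$ and $b$ at $-\infty$, provided $(a,b)\notin\Sigma$, where no Fatou/positivity trick is available. Your direct contradiction at the end of Step 3 is the limit version of the paper's two tests (with $w_n^-$ and with $\phi_1$), and your Step 4 is the paper's argument. As in the paper's (\ref{e55}), the uniform bound $0\le f(x,t)\le C|t|$ really needs $(f_1)$ for the intermediate range of $t$. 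Continuity alone gives no uniformity in $x\in\mathbb{R}^N$.

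\textbf{An incorrect claim in your ``main obstacle'' paragraph.} It is false that $\int_{\mathbb{R}^N}|v|K\,dx<\infty$ for every $v\in X$. Already $\phi_1K\equiv 1$, so $\int_{\mathbb{R}^N}\phi_1K\,dx=\infty$. With the weight $K$, an additive constant in a bound such as $|g(x,t)|\le\varepsilon|t|+C_\varepsilon$ is fatal, so that splitting does not work.

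The point is not actually delicate, so you do not need it. On $\{w=0\}$ you already have $|f(x,u_n)|/\|u_n\|\le C|w_n|\to 0$ pointwise. Since $C|w_n|\to C|w|$ in $L^2_K$ (or, along a subsequence, $|w_n|\le h\in L^2_K$ by \cite[Lemma A.1]{bur7}), the generalized dominated convergence theorem gives the $L^2_K$ convergence of Step 2 directly.

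If you want a splitting anyway, keep the remainder linear: take $|g(x,t)|\le\varepsilon|t|$ for $|t|\ge M_\varepsilon$ and $|g(x,t)|\le C|t|$ for $|t|<M_\varepsilon$. Then use that $|w_n|\le M_\varepsilon/\|u_n\|$ on $\{|u_n|<M_\varepsilon\}$, so $\int_{\{|u_n|<M_\varepsilon\}}|w_n|^2K\,dx\to 0$.
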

\begin{proof}
 Let $\left\{u_n\right\} \subset X$ such that
\begin{equation}\label{e54}
I\left(u_n\right) \rightarrow c, \quad\left(1+\left\|u_n\right\|\right)\left\|I^{\prime}\left(u_n\right)\right\| \rightarrow 0 .
\end{equation}
We suppose by contradiction that $\left\|u_n\right\| \rightarrow \infty$. Set $w_n=\frac{u_n}{\| u_n\|} $. Then $\left\|w_n\right\|=1$, and there exists $w_0 \in X$ such that, passing to a subsequence if necessary,
$$
\begin{gathered}
w_n \rightharpoonup w_0 \text { in } X, \\
w_n \rightarrow w_0 \text { in } L_K^2\left(\mathbb{R}^N\right), \\
w_n(x) \rightarrow w_0(x) \text { for a.e. } x \in \mathbb{R}^N .
\end{gathered}
$$
We prove that $w_0(x) \not \equiv 0$. In fact, if not, we assume that $w_0(x) \equiv 0$,
that is, $w_n \rightarrow 0$ in $L_K^2\left(\mathbb{R}^N\right)$. From (\ref{e54}) and ($f_2$) we deduce
$$
\begin{aligned}
o(1) & =\frac{\left\langle I^{\prime}\left(u_n\right), u_n\right\rangle}{\left\|u_n\right\|} \\
& =\int_{\mathbb{R}^N}  \nabla w_n^2 K(x) d x-\int_{\mathbb{R}^N}  \frac{f\left(x, u_n\right)}{\left\|u_n\right\|} w_n K(x) d x \\
& =1-\int_{\mathbb{R}^N}  \frac{f\left(x, u_n\right)}{\left\|u_n\right\|} w_n^2 K(x) d x \\
& \rightarrow 1.
\end{aligned}
$$
which is a contradiction. Choosing $w_n^{-}(x)=\max \left\{-w_n(x), 0\right\}$ as test function, using (\ref{e54}) again we have
$$
\begin{aligned}
o(1) & =\frac{\left\langle I^{\prime}\left(u_n\right), w_n^{-}\right\rangle}{\left\|u_n\right\|} \\
& =\int_{\mathbb{R}^N} K(x) \nabla\left(w_n^{-}\right)^2 d x+\int_{\mathbb{R}^N} K(x) \frac{f\left(x, u_n\right)}{\left\|u_n\right\|} w_n^{-} d x \\
& =\left\|w_n^{-}\right\|^2+\int_{\mathbb{R}^N} K(x) \frac{f\left(x, u_n\right)}{\left\|u_n\right\|}\left(w_n^{-}\right)^2 d x.
\end{aligned}
$$
From ($f_2$) it follows that $w_n^{-}(x)=o(1)$. Then we have $w_0^{-}(x)=0$ for a.e. $x \in \mathbb{R}^N$ and thus $w_0(x) \geq 0$ and $w_0(x) \not \equiv 0$.

Let
$$
A_1=\left\{x \in \mathbb{R}^N: w_0(x)=0\right\}, \quad A_2=\left\{x \in \mathbb{R}^N: w_0(x)>0\right\} .
$$
If $x \in A_1$, using ($f_2$), ($f_3$), there exists $C_1>0$ such that

\begin{equation}\label{e55}
0 \leq \frac{f(x, t)}{t} \leq C_1, \quad \forall x \in \mathbb{R}^N, \forall t \in \mathbb{R},
\end{equation}
which implies that
$$
\frac{\left|f\left(x, u_n(x)\right)\right|}{\left\|u_n\right\|}=\left|\frac{f\left(x, u_n(x)\right)}{u_n(x)} w_n(x)\right| \leq C_1\left|w_n(x)\right| \rightarrow 0 \text { for a.e. } x \in A_1 .
$$
Then we have

\begin{equation}\label{e56}
\frac{f\left(x, u_n(x)\right)}{\left\|u_n\right\|} \rightarrow 0=\left(\lambda_1+\delta_1\right) w_0(x) \text { for a.e. } x \in A_1,
\end{equation}
where $\delta_1$ is taken as in (\ref{e52}).

If $x \in A_2$, then $u_n(x)=w_n(x)\left\|u_n\right\| \rightarrow+\infty$. Using ($f_3$) we obtain
\begin{equation}\label{e57}
\liminf _{n \rightarrow+\infty} \frac{f\left(x, u_n(x)\right)}{\left\|u_n\right\|}=\liminf _{n \rightarrow \infty} \frac{f\left(x, u_n(x)\right)}{u_n(x)} w_n(x) \geq\left(\lambda_1+\delta_1\right) w_0(x) \text { for a.e. } x \in A_2.
\end{equation}
According to $w_n \rightharpoonup w_0$ in $X$, it follows that $\int_{\mathbb{R}^N} w_n \phi_1 K(x)d x \rightarrow \int_{\mathbb{R}^N} w_0 \phi_1 K(x)d x$. By (\ref{e54}) and Lemma \ref{le0}, we have
$$
\begin{aligned}
o(1) & =\frac{\left\langle I^{\prime}\left(u_n\right), \phi_1\right\rangle}{\left\|u_n\right\|} \\
& =\int_{\mathbb{R}^N} \nabla w_n \cdot \nabla \phi_1  K(x) d x-\int_{\mathbb{R}^N}  \frac{f\left(x, u_n\right)}{\left\|u_n\right\|} \phi_1  K(x) d x \\
& =\lambda_1 \int_{\mathbb{R}^N} w_n \phi_1 K(x)d x-\int_{\mathbb{R}^N} \frac{f\left(x, u_n\right)}{u_n} w_n \phi_1  K(x) d x.
\end{aligned}
$$
Then, from (\ref{e56}), (\ref{e57}) and Fatou's lemma, it follows that

$$
\lambda_1 \int_{\mathbb{R}^N} w_0 \phi_1 K(x)d x \geq\left(\lambda_1+\delta_1\right) \int_{\mathbb{R}^N} w_0 \phi_1 K(x)d x.
$$
Since, $\phi_1(x)>0$ and $w_0 \geq 0, w_0 \not \equiv 0$, we have  $\int_{\mathbb{R}^N} w_0 \phi_1 K(x) d x>0$. Hence $\lambda_1 \geq \lambda_1+\delta_1$, a contradiction.
This imlies that $\left\{u_n\right\}$ is bounded in $X$.  Thus there exists $u_0 \in X$ such that, passing to a subsequence if necessary,
$$
\begin{gathered}
u_n \rightharpoonup u_0 \text { in } X, \\
u_n \rightarrow u_0 \text { in } L_K^2\left(\mathbb{R}^N\right), \\
u_n(x) \rightarrow u_0(x) \text { for a.e. } x \in \mathbb{R}^N .
\end{gathered}
$$
Using  ($f_2$) and ($f_3$), it is easy to obtain that
$$
\int_{\mathbb{R}^N} f\left(x, u_n\right)\left(u_n-u_0\right) K(x) d x=o(1).
$$
Noting that $\left\langle I^{\prime}\left(u_n\right), u_n-u_0\right\rangle \rightarrow 0$, it follows that
$$
\left\|u_n-u_0\right\|^2=\int_{\mathbb{R}^N}  f\left(x, u_n\right)\left(u_n-u_0\right) K(x) d x+o(1)=o(1),
$$
which implies that $u_n \rightarrow u_0$ in $X$. This completes the proof.
\end{proof}

\begin{proof} [Proof of Theorem \ref{t51}]
According to ($f_2$) and ($f_3$), for any given $\varepsilon \in\left(0, f_0-\lambda_1\right)$,  such that
  there exist $\delta_1>0$ such that

 $$
f(x, t) \geqslant\left(f_0-\varepsilon\right) t, \quad \forall x \in \mathbb{R}^N, \forall 0<t \leq \delta_1.
$$
Thus, we have
$$
F(x, t) \geqslant \frac{1}{2}\left(f_0-\varepsilon\right) t^2, \forall x \in \mathbb{R}^N, \forall 0 <t \leq \delta_1.
$$
Since $\phi_1(x)>0$ in $\mathbb{R}^N$, there exists $t_1>0$ such that
$$
t_1 \phi_1(x)<\delta_1, \quad \text{\ for\ all\ } x\in \mathbb{R}^N .
$$
Hence, we infer
$$
\begin{aligned}
I\left(t \phi_1\right) & =\frac{t^2}{2} \int_{\mathbb{R}^N}  |\nabla \phi_1|^2 K(x) d x-\int_{\mathbb{R}^N}  F\left(x, t \phi_1\right) K(x) d x \\
& \leq \frac{t^2}{2} \lambda_1\left|\phi_1\right|_{K,2}^2-\frac{t^2}{2}\left(f_0-\varepsilon\right)\int_{\mathbb{R}^N}  \phi^2 K(x) d x<0
\end{aligned}
$$
for all $t\in (0,t_1)$.
 By Lemma \ref{ll51}, there exist $\rho, \alpha>0$ such that $I(u) \geqslant \alpha$ for all $u \in X$ with $\|u\|=\rho$.
 Hence, we have $d=\inf \left\{I(u): u \in \bar{B}_\rho\right\}<0$. Then, there is a minimizing sequence $\left\{u_n\right\} \subset \bar{B}_\rho$  such that
$$
I\left(u_n\right) \rightarrow d, \quad I^{\prime}\left(u_n\right) \rightarrow 0
$$
by Ekeland's variational principle. From Lemma \ref{le53}, there is  a $u_1 \in \bar{B}_\rho$  such that $I\left(u_1\right)=d<0$
and $I^{\prime}\left(u_1\right)=0$, which shows that $u_1$ is a nontrivial critical point of $I$.

Moreover, by the mountain pass theorem \cite{bur7} for
$$
c=\inf _{\gamma \in \Gamma} \max _{t \in[0,1]} I(\gamma(t))
$$
and
$$
\Gamma=\{\gamma \in C([0,1], X): \gamma(0)=0, \gamma(1)=e\},
$$
there exists $u_2 \in X$ such that $I\left(u_2\right)=c>0$ and $I^{\prime}\left(u_2\right)=0$, which implies that $u_2$ is another critical point of $I$.

Finally, we show that $u_1, u_2$ are positive solutions of the problem (\ref{e51}). In fact, if $u$ is a critical point of $I$,
then after multiplying  (\ref{e51})  by $u^{-}$,
we see that $\left\|u^{-}\right\|^2=0$, which means that  $u^{-}=0$, and thus $u \geqslant 0$,
as a result, $u_1>0$ and $u_2>0$ by a standard process. The proof is complete.

\end{proof}

\end{document}